\title[Coupled Kirchhoff--Choquard system]{Existence of ground state solutions to Kirchhoff--Choquard system in $\mathbb{R}^3$ with constant potentials}
\author[H. Matsuzawa]{Hiroshi Matsuzawa$^{\dag*}$}
\thanks{2020 Mathematics Subject Classification. 35J50, 35J20, 35J47, 35Q55, 45K05}
\thanks{\textit{Key words and phrases.} Kirchhoff--Choquard system, Nehari--Pohozaev manifold, ground state solution, critical, Hardy--Littlewood--Sobolev, Choquard type nonlinearity}
\thanks{$^\dag$  Faculty of Science, Kanagawa University, 3-27-1 Rokkakubashi, Kanagawa-ku, Yokohama-city, Kanagawa, 221-8686, Japan. (Email: \texttt{hmatsu@kanagawa-u.ac.jp})}
\thanks{$^*$ Corresponding Author : Hiroshi Matsuzawa}
\date{\today}
\theoremstyle{definition}
\newtheorem*{Claim}{Claim}
\theoremstyle{plain}
\newtheorem{Th}{Theorem}
\newtheorem{Prop}{Proposition}[section]
\newtheorem{Lem}[Prop]{Lemma}
\newtheorem{Cor}[Prop]{Corollary}
\numberwithin{equation}{section}
\newcommand{\vertiii}[1]{{\left\vert\kern-0.25ex\left\vert\kern-0.25ex\left\vert #1 
    \right\vert\kern-0.25ex\right\vert\kern-0.25ex\right\vert}}
\begin{document}
    \begin{abstract}
In this paper, we consider the following linearly coupled Kirchhoff--Choquard system in $\mathbb{R}^3$:
\begin{empheq}[left = \empheqlbrace]{align*}
& -\left(a_1 + b_1\int_{\mathbb{R}^3} |\nabla u|^2\,dx\right)\Delta u + V_1 u = \mu (I_{\alpha} * |u|^p) |u|^{p - 2} u + \lambda v,\ \ x\in\mathbb{R}^3 \\
& -\left(a_2 + b_2\int_{\mathbb{R}^3} |\nabla v|^2\,dx\right)\Delta v + V_2 v = \nu (I_{\alpha} * |v|^q) |v|^{q - 2} v + \lambda u,\ \ x\in\mathbb{R}^3 \\
& u, v \in H^1(\mathbb{R}^3),
\end{empheq}
where $a_1, a_2, b_1, b_2, V_1, V_2$, $\lambda$, $\mu$ and $\nu$ are positive constants. The function $I_{\alpha} : \mathbb{R}^3 \setminus \{0\} \to \mathbb{R}$ denotes the Riesz potential with $\alpha \in (0, 3)$.

We study the existence of positive ground state solutions under the conditions $\frac{3 + \alpha}{3} < p \le q < 3 + \alpha$, or $\frac{3 + \alpha}{3} < p < q = 3 + \alpha$, or $\frac{3 + \alpha}{3} = p < q < 3 + \alpha$.  
Assuming suitable conditions on $V_1$, $V_2$, and $\lambda$, we obtain a ground state solution by employing a variational approach based on the Nehari–Pohozaev manifold, inspired by the works of Ueno (Commun. Pure Appl. Anal. 24 (2025)) and Chen–Liu (J. Math. Anal. 473 (2019)).

In particular, we emphasize that in the upper half critical case $\frac{3 + \alpha}{3} < p < q = 3 + \alpha$ and the lower half critical case $\frac{3 + \alpha}{3} = p < q < 3 + \alpha$, a ground state solution can still be obtained by taking $\mu$ or $\nu$ sufficiently large to control the energy level of the minimization problem. 

To employ the Nehari--Pohozaev manifold we extend a regularity result to the linearly coupled system, which is essential for the validity of the Pohozaev identity.
    \end{abstract}
        \maketitle
    \section{Introduction and main theorem}
    \subsection{Introduction}
In this paper, we consider the following Kirchhoff–Choquard system in $\mathbb{R}^3$:
\begin{align}\label{eq:NKC}
\begin{cases}
-\displaystyle\left(a_1 + b_1\int_{\mathbb{R}^3} |\nabla u|^2\,dx\right)\Delta u + V_1u = \mu(I_{\alpha}*|u|^p)|u|^{p - 2}u + \lambda v, \\
-\displaystyle\left(a_2 + b_2\int_{\mathbb{R}^3} |\nabla v|^2\,dx\right)\Delta v + V_2v = \nu(I_{\alpha}*|v|^q)|v|^{q - 2}v + \lambda u, \\
u, v \in H^1(\mathbb{R}^3),
\end{cases}
\end{align}
where $a_1, a_2, b_1, b_2, \lambda, \mu$ and $\nu$ are positive constants. The potentials $V_i > 0$ $(i=1,2)$ are constants, and $I_{\alpha}:\mathbb{R}^3\setminus\{0\} \to \mathbb{R}$ denotes the Riesz potential defined by
\[
I_{\alpha}(x) = \frac{\Gamma\left(\dfrac{3 - \alpha}{2}\right)}{\Gamma\left(\dfrac{\alpha}{2}\right) \pi^{3/2} 2^{\alpha} |x|^{3 - \alpha}}.
\]
This problem possesses several notable features. First, it involves two different types of nonlocality, both of which are known to arise in various physical phenomena. In the present setting, the nonlocality on the left-hand side originates from a Kirchhoff-type operator, while that on the right-hand side stems from a Choquard-type convolution nonlinearity. The physical background of each nonlocal term will be discussed shortly. Second, we consider a coupled system of equations, whereas previous studies such as \cite{Lu} and \cite{Chen-Liu-2} dealt with a single equation.
Third, we address a case involving \emph{critical} nonlinearity, the precise meaning of which will be clarified later. The main purpose of this paper is to prove the existence of nontrivial, in particular ground state, solutions to system \eqref{eq:NKC}.

Let us briefly recall some previous results on related problems.

When $\lambda = 0$ and the nonlocal term $\mu (I_{\alpha} * |u|^p) |u|^{p-2}u$ is replaced by the local nonlinearity $f(u)$, equation~\eqref{eq:NKC} reduces to the Kirchhoff--Schr\"{o}dinger equation:
\begin{align}\label{eq:Kirchhoff-single}
-\left(a_1 + b_1 \int_{\mathbb{R}^3} |\nabla u|^2\,dx \right) \Delta u + V_1 u = f(u) \quad \text{for } x \in \mathbb{R}^3.
\end{align}
The Kirchhoff-type equation was introduced by Kirchhoff~\cite{Kirchhoff} as a generalization of the classical d'Alembert wave equation, taking into account the free vibrations of elastic strings:
\begin{align}
\rho \frac{\partial^2 u}{\partial t^2} - \left( \frac{P_0}{h} + \frac{E}{2L} \int_0^L \left| \frac{\partial u}{\partial x} \right|^2\,dx \right) \frac{\partial^2 u}{\partial x^2} = 0.
\end{align}
Kirchhoff's model accounts for changes in the length of the string caused by transverse vibrations.

The Kirchhoff-type equation~\eqref{eq:Kirchhoff-single} has attracted considerable attention. In particular, He and Zou~\cite{He-Zou} established the existence and concentration of positive solutions via variational methods on the Nehari manifold, assuming that the nonlinearity $f \in C^1(\mathbb{R})$ satisfies the Ambrosetti--Rabinowitz condition. This condition holds for $f(u) = |u|^{p-2}u$ if and only if $p > 4$.

Li and Ye~\cite{Li-Ye} considered equation~\eqref{eq:Kirchhoff-single} with the nonlinearity $f(u) = |u|^{p-2}u$.  
They proved the existence of a positive ground state solution for the case $3 < p < 6$.  
Their approach is based on a minimization problem over the \emph{Nehari--Pohozaev} manifold, which was introduced by Ruiz~\cite{Ruiz} for the study of the Schr\"{o}dinger--Poisson equation.  

Guo~\cite{Guo} extended the results of Li and Ye~\cite{Li-Ye} by employing the Nehari--Pohozaev manifold, which is slightly different from the one in \cite{Li-Ye}, and proved the existence of a positive ground state solution under weaker assumptions. In particular, the condition $3 < p < 6$ was relaxed to $2 < p < 6$ for $f(u) = |u|^{p-2}u$ in \cite{Guo}

Tang and Chen~\cite{Tang-Chen} further improved the studies of Li and Ye \cite{Li-Ye} and Guo~\cite{Guo}.  
They proved that equation~\eqref{eq:Kirchhoff-single} admits a ground state solution via the Nehari--Pohozaev manifold under suitable conditions on $f$.  
Unlike~\cite{Guo}, their approach does not require the nonlinearity $f$ to be differentiable.  
To show that the minimizer on the Nehari--Pohozaev manifold is actually a critical point of the energy functional, they employed deformation lemma and degree theory.

We recall the study for the coupled Kirchhoff--Schrodinger system:
\begin{align}\label{eq:Kirchhoff-system}
\begin{cases}
\displaystyle -\left(a_1 + b_1\int_{\mathbb{R}^3} |\nabla u|^2\,dx\right)\Delta u + V_1u = \mu|u|^{p - 2}u + \lambda v, \text{ for } x \in \mathbb{R}^3,   \\
\displaystyle -\left(a_2 + b_2\int_{\mathbb{R}^3} |\nabla v|^2\,dx\right)\Delta v + V_2v = \nu|v|^{q - 2}v + \lambda u, \text{ for } x \in \mathbb{R}^3, \\
             u, v \in H^1(\mathbb{R}^3).
        \end{cases}
\end{align}
 L\"{u} and Peng \cite{Lu-Peng} considered problem \eqref{eq:Kirchhoff-system} with $\mu |u|^{p-2}u$ and $\nu|v|^{q-2}v$ replaced by $f(u)$ and $g(v)$, respectively. They assume the Berestycki--Lions type condition on $f$ and $g$. By this approach we can see that when $2<p,q<6$, problem \eqref{eq:Kirchhoff-system}  admits a nontrivial ground state solution. However, \cite{Lu-Peng} does not contain the case where $f$ or $g$ admit critical growth. 

Recently, Ueno~\cite{Tatsuya} studied system~\eqref{eq:Kirchhoff-system} and obtained a ground state solution by using the Nehari--Pohozaev manifold and the approach developed in~\cite{Tang-Chen}.  
Although the exponents are restricted to $3 < p, q \le 6$, Ueno~\cite{Tatsuya} showed that the approach developed in~\cite{Tang-Chen} is applicable to the critical case $3 < p < q = 6$, and can be used to establish the existence of a ground state solution when the parameter $\mu$ is sufficiently large.

When $\lambda=0$ and $a_1=a_2=1$, $b_1=b_2=0$, $V_1=V_2=1$ in \eqref{eq:NKC} is reduced to the following single Choquard equation of the following form on $\mathbb{R}^N$ with $N\in\mathbb{N}$. 
\begin{align}\label{eq:Choquard}
-\Delta u+u=(I_{\alpha}*|u|^p)|u|^{p-2}u\ \ \mbox{in}\ \ \mathbb{R}^N. 
\end{align}
When $N = 3$, $\alpha = 2$, and $p = 2$, equation~\eqref{eq:Choquard-ori} becomes
\begin{align}\label{eq:Choquard-ori}
- \Delta u + u = (I_2 * u^2)\, u \quad \text{in } \mathbb{R}^3,
\end{align}
which is known as the Choquard--Pekar equation. This equation was first introduced by Pekar~\cite{Pekar} in 1954 to model the quantum mechanics of a polaron at rest.  
In 1976, Choquard employed equation~\eqref{eq:Choquard-ori} to describe an electron trapped in its own hole, as part of a certain approximation within the Hartree--Fock theory of a one-component plasma (see~\cite{Lieb}). Penrose \cite{Penrose} proposed \eqref{eq:Choquard-ori} as a model of self-gravitating matter. Lieb \cite{Lieb} and Lions \cite{Lions-Choquard} obtained the existence of solutions for \eqref{eq:Choquard-ori} by using variational methods.

Also \eqref{eq:Choquard} has a variational structure and the corresponding energy functional is given by
\begin{align*}
E(u)=\frac{1}{2}\int_{\mathbb{R}^N}(|\nabla u|^2+u^2)dx-\int_{\mathbb{R}^N}(I_{\alpha}*|u|^p)|u|^pdx. 
\end{align*}
From the Hardy--Littlewood--Sobolev inequality(see Lemma \ref{H-L-S-ineq} below) $E(u)$ is well defined for $u\in H^1(\mathbb{R}^N)$ if and only if $\frac{N+\alpha}{N}\le p\le \frac{N+\alpha}{N-2}$. 

Moroz and Van Schaftingen \cite{Moroz-Schaftingen} proved that if $\frac{N+\alpha}{N}<p<\frac{N+\alpha}{N-2}$ then \eqref{eq:Choquard} admits a nontrivial weak solution $u\in H^1(\mathbb{R}^N)$. Moreover the authors showed that if $p\ge \frac{N+\alpha}{N-2}$ or $p\le\frac{N+\alpha}{N}$ and $u\in H^1(\mathbb{R}^N)\cap L^{\frac{2Np}{N+\alpha}}(\mathbb{R}^N)$ with $\nabla u\in H^1_{\mathrm{loc}}(\mathbb{R}^N)\cap L^{\frac{2Np}{N+\alpha}}_{\rm{loc}}(\mathbb{R}^N)$ is a weak solution to \eqref{eq:Choquard} then $u=0$ via the Pohozaev identity. In this sense, exponents $p=\frac{N+\alpha}{N}$ and $p=\frac{N+\alpha}{N-2}$ are critical exponents. See also \cite{Moroz-Schaftingen-2} for more results on \eqref{eq:Choquard}. 

There are several studies concerning the linearly coupled system of Choquard-type equations:
\begin{align*}
\begin{cases}
-\Delta u + V_1 u = (I_{\alpha} * F(u)) f(u) + \lambda v, \quad x \in \mathbb{R}^N, \\
-\Delta v + V_2 v = (I_{\alpha} * G(v)) g(v) + \lambda u, \quad x \in \mathbb{R}^N,
\end{cases}
\end{align*}
where $f$ and $g$ are continuous functions, and $F$ and $G$ denote their respective primitives.  
We refer the reader to~\cite{Y-A-S-S, Chen-Liu-1, Xu-Ma-Xing} for related results.

There are also several studies on a single Kirchhoff--Choquard equation:
\begin{align}\label{eq:single-CKS}
- \left(a + b \int_{\mathbb{R}^3} |\nabla u|^2 \, dx \right) \Delta u + V u = (I_{\alpha} * |u|^p) |u|^{p-2} u, \quad x \in \mathbb{R}^3.
\end{align}
L\"{u}~\cite{Lu} proved the existence of a ground state solution to~\eqref{eq:single-CKS} by using the Nehari manifold and the concentration-compactness principle when $2 < p < 3 + \alpha$.  
Recently, Chen and Liu~\cite{Chen-Liu-2} extended L\"{u}'s result by employing the Nehari--Pohozaev manifold.  
They proved the existence of a ground state solution to~\eqref{eq:single-CKS} for $\frac{3+\alpha}{3} < p < 3 + \alpha$. For the fractional counterpart of problem~\eqref{eq:single-CKS}, we refer the reader to~\cite{Ambrosio-Isernia-Temperini}.
Although the Kirchhoff--Choquard system studied in this paper does not directly originate from a specific physical model, it naturally combines two important types of nonlocalities, each of which arises in physical contexts as previously explained. Their interaction introduces new mathematical challenges; in particular, the Choquard-type convolution nonlinearity gives rise to an additional critical exponent, further enriching the problem compared to the Kirchhoff–Schr\"{o}dinger system \eqref{eq:Kirchhoff-system} studied in \cite{Lu-Peng} and \cite{Tatsuya}. Although the system may be regarded as a mathematically natural combination of two well-known types of nonlocal operators, it may also serve as a prototype for exploring the interplay between different nonlocal effects, which has been observed in various physical models. We believe that this study represents an important step toward addressing the above mathematical challenges.

Motivated by the above developments, particularly the results of Ueno~\cite{Tatsuya} and Chen–Liu~\cite{Chen-Liu-2},
we establish the existence of a ground state solution to system~\eqref{eq:NKC}.
In particular, we emphasize that the approach developed in~\cite{Tatsuya} is applicable to both the upper half critical case
$\frac{3 + \alpha}{3} < p < q = 3 + \alpha$ and the lower half critical case $\frac{3 + \alpha}{3} = p < q < 3 + \alpha$.

To obtain a ground state solution as a minimizer of a variational problem on the Nehari–Pohozaev manifold, the weak solution must satisfy the corresponding Pohozaev identity.
Although Yang, Albuquerque, Silva, and Silva mention in \cite{Y-A-S-S} that the Pohozaev identity for the linearly coupled Choquard-type system can be derived by adapting known arguments from~\cite{Moroz-Schaftingen-3, J.M.do-O}, they do not provide a rigorous justification of the regularity required for its validity.
To the best of our knowledge, such a regularity result for the coupled Choquard system has not yet been rigorously established in the literature.
As one of the contributions of this paper, we provide a detailed and self-contained proof of the regularity required for the Pohozaev identity to hold.

In this paper, as a first step, we consider the case where the potentials $V_1$ and $V_2$ are positive constants.  
The problem with nonconstant potentials will be discussed in a forthcoming paper \cite{Matsuzawa}, based on the approach developed in~\cite{Matsuzawa-Ueno}.

\subsection{Assumption and Main theorems}
  We introduce the following norm on $H^1(\mathbb{R}^3)$ which is equivalent to the usual norm on $H^1(\mathbb{R}^3)$:
    \[\|w\|_{a_i, V_i} = \left(a_i\int_{\mathbb{R}^3} |\nabla w|^2\,dx + V_i\int_{\mathbb{R}^3} w^2\,dx\right)^{\frac{1}{2}} \quad \mbox{for}\ \ w \in H^1(\mathbb{R}^3).\]
    We assume that $V_1, V_2$ and $\lambda$ satisfy
    \begin{enumerate}[label = $(\mathrm{V})$, resume = V]
        \item $\lambda \le \delta \sqrt{V_1V_2}$ for some $\displaystyle \delta \in \left(0, 1\right)$.   \label{assumption:potential}
    \end{enumerate}
    We set the product space $H \coloneqq H^1(\mathbb{R}^3) \times H^1(\mathbb{R}^3)$. Then $H$ is a Hilbert space and the norm of $H$ is given by
    \[\|(u, v)\|^2 = \|u\|_{a_1, V_1}^2 + \|v\|_{a_2, V_2}^2.\]
    Let us define the energy functional $I: H \to \mathbb{R}$ corresponding to \eqref{eq:NKC} by
\begin{align}\label{eq:energy}
\begin{split}
I(u,v)&=\frac{1}{2}\left(a_1\int_{\mathbb{R}^3}|\nabla u|^2dx+a_2\int_{\mathbb{R}^3}|\nabla v|^2dx\right)+\frac{1}{2}\left(\int_{\mathbb{R}^3}V_1u^2dx+\int_{\mathbb{R}^3}V_2v^2dx\right) \\ 
      &\ \ \ \ +\frac{1}{4}\left\{b_1\left(\int_{\mathbb{R}^3}|\nabla u|^2dx\right)^2+b_2\left(\int_{\mathbb{R}^3}|\nabla v|^2dx\right)^2\right\} \\
      &\ \ \ -\dfrac{\mu}{2p}\int_{\mathbb{R}^3}(I_{\alpha}*|u|^p)|u|^pdx-\frac{\nu}{2q}\int_{\mathbb{R}^3}(I_{\alpha}*|v|^q)|v|^qdx-\lambda\int_{\mathbb{R}^3}uvdx.
\end{split}
\end{align}
It is easy to verify that $(u,v)\in H$ is a weak solution to \eqref{eq:NKC} if and only if $(u,v)\in H$ is a critical point of $I$, that is, $(u,v)$ satisfies
\begin{align*}
\langle I'(u,v), (\varphi, \psi)\rangle&:=a_1\int_{\mathbb{R}^3}\nabla u\cdot\nabla\varphi dx+a_2\int_{\mathbb{R}^3}\nabla v\cdot\nabla\psi 
dx+\int_{\mathbb{R}^3}V_1u\varphi dx+\int_{\mathbb{R}^3}V_2v\psi dx \\ 
      &\ \ \ \ +b_1\left(\int_{\mathbb{R}^3}|\nabla u|^2dx\right)\int_{\mathbb{R}^3}\nabla u\cdot\nabla\varphi dx+b_2\left(\int_{\mathbb{R}^3}|\nabla v|^2dx\right)\int_{\mathbb{R}^3}\nabla v\cdot\nabla\psi dx \\
      &\ \ \ -\mu\int_{\mathbb{R}^3}(I_{\alpha}*|u|^p)|u|^{p-2}\varphi dx-\nu\int_{\mathbb{R}^3}(I_{\alpha}*|v|^q)|v|^{q-2}\psi dx \\
      &\ \ \ -\lambda\int_{\mathbb{R}^3}u\psi dx-\lambda\int_{\mathbb{R}^3}v\varphi dx=0.
\end{align*}
  for any $(\varphi, \psi) \in C_0^{\infty}(\mathbb{R}^3) \times C_0^{\infty}(\mathbb{R}^3)$.
    
    We say that a weak solution $(u^*, v^*) \in H$ to \eqref{eq:NKC} is a \emph{ground state solution} if $I(u^*, v^*) \le I(u, v)$ for any other weak solution $(u, v) \in H \setminus \{(0, 0)\}$ to \eqref{eq:NKC}.

Main theorems of this paper are as follows. 

\begin{Th}\label{Th:non-critical}
Assume that \ref{assumption:potential} holds, and suppose that $\frac{3+\alpha}{3} < p, q < 3 + \alpha$.  
Then for any $\mu$, $\nu>0$  system \eqref{eq:NKC} admits a ground state solution.
\end{Th}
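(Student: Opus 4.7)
The plan is to obtain the ground state as a minimizer on a Nehari--Pohozaev manifold, following the scheme of Ueno~\cite{Tatsuya} and Chen--Liu~\cite{Chen-Liu-2}. I would first invoke the regularity result proved later in the paper to justify the Pohozaev identity for weak solutions of \eqref{eq:NKC}, giving
\begin{align*}
P(u,v) &\coloneqq \tfrac{1}{2}\!\left(a_1\|\nabla u\|_2^2 + a_2\|\nabla v\|_2^2\right) + \tfrac{3}{2}\!\left(V_1\|u\|_2^2 + V_2\|v\|_2^2\right) + \tfrac{1}{2}\!\left(b_1\|\nabla u\|_2^4 + b_2\|\nabla v\|_2^4\right) \\
&\quad - \tfrac{3+\alpha}{2p}\mu\!\int_{\mathbb{R}^3}\!(I_\alpha\!*\!|u|^p)|u|^p\,dx - \tfrac{3+\alpha}{2q}\nu\!\int_{\mathbb{R}^3}\!(I_\alpha\!*\!|v|^q)|v|^q\,dx - 3\lambda\!\int_{\mathbb{R}^3}\!uv\,dx = 0.
\end{align*}
Combining $P$ with $\langle I'(u,v),(u,v)\rangle$ in a linear combination $J(u,v)$ whose leading quadratic part is sign-definite, I set $\mathcal{M} \coloneqq \{(u,v) \in H\setminus\{(0,0)\} : J(u,v) = 0\}$ and $m \coloneqq \inf_{\mathcal{M}} I$. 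Assumption \ref{assumption:potential} with $\delta < 1$ ensures coercivity of the quadratic form of $I$, so I may work throughout with the equivalent norm $\|\cdot\|$.

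For any $(u,v) \in H$ with $u,v \not\equiv 0$, I would analyze the fiber map $h(t) \coloneqq I(u^t, v^t)$ under a scaling of the form $(u^t, v^t)(x) = (t^{\theta} u(x/t), t^{\theta} v(x/t))$, with $\theta$ chosen so that the Choquard, Kirchhoff, linear, and coupling pieces appear with distinct powers of $t$. Because $\tfrac{3+\alpha}{3} < p, q < 3+\alpha$, the Choquard terms dominate at infinity and $h(t)\to -\infty$ as $t\to\infty$, while $h(t) > 0$ for small $t$, producing a unique maximum point $t_\ast$ at which $(u^{t_\ast}, v^{t_\ast}) \in \mathcal{M}$. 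Combining $J=0$ with the Hardy--Littlewood--Sobolev inequality yields a uniform positive lower bound on $\|(u,v)\|$ on $\mathcal{M}$, so $m > 0$ and every minimizing sequence is bounded in $H$.

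Taking $(u_n,v_n) \subset \mathcal{M}$ with $I(u_n,v_n) \to m$, I first replace it by $(|u_n|,|v_n|)$, which preserves both $I$ and $J$. Since the problem is translation invariant, a Lions-type vanishing alternative combined with Hardy--Littlewood--Sobolev rules out vanishing (otherwise the Choquard terms would disappear and $J=0$ would force $\|(u_n,v_n)\|\to 0$, contradicting $m>0$), so after suitable translations $y_n\in\mathbb{R}^3$ one extracts a nontrivial weak limit $(u^\ast,v^\ast)$. A Br\'ezis--Lieb splitting for the Kirchhoff term together with the nonlocal Br\'ezis--Lieb identity of Moroz--Van Schaftingen~\cite{Moroz-Schaftingen-2} for the Choquard term give $I(u^\ast,v^\ast) \le m$ and $J(u^\ast,v^\ast) \le 0$; if $J(u^\ast,v^\ast) < 0$, the fiber-map analysis supplies some $t_0 \in (0,1)$ with $(u^{\ast t_0}, v^{\ast t_0}) \in \mathcal{M}$ and $I(u^{\ast t_0}, v^{\ast t_0}) < m$, a contradiction, so $(u^\ast,v^\ast)$ attains $m$. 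Positivity of $u^\ast, v^\ast$ then follows from the strong maximum principle applied to each equation, using $\lambda u^\ast, \lambda v^\ast \ge 0$.

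The main obstacle I anticipate is promoting the constrained minimizer $(u^\ast,v^\ast)\in \mathcal{M}$ to an unconstrained critical point of $I$: since $\mathcal{M}$ is defined by a combination of $\langle I'(u,v),(u,v)\rangle$ and $P(u,v)$ rather than by a standard Nehari condition, the usual Lagrange-multiplier reduction is not immediate. I would resolve this by invoking the deformation-lemma plus Brouwer-degree argument of Tang--Chen~\cite{Tang-Chen}, as adapted by Ueno~\cite{Tatsuya} to the coupled Kirchhoff setting; concretely, one constructs a two-parameter homotopy of the pair $(u,v) \mapsto (u^t, v^t)$, applies the quantitative deformation lemma near the candidate level $m$, and shows that a certain Brouwer degree is nonzero, thereby ruling out the possibility that the deformed family avoids $\mathcal{M}$. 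A secondary technical issue, handled by the regularity result mentioned in the introduction, is rigorously justifying the Pohozaev identity for the linearly coupled system in the first place.
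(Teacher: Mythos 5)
Your overall strategy is the same as the paper's: Pohozaev identity via the regularity result, the Nehari--Pohozaev manifold defined through $J=\langle I'(u,v),(u,v)\rangle+2P(u,v)$, the fiber-map projection, positivity of $m$, exclusion of vanishing via Lions' lemma and Hardy--Littlewood--Sobolev, Br\'ezis--Lieb splittings after translation, and finally the Tang--Chen/Ueno deformation-plus-degree argument to pass from constrained minimizer to critical point. Two specific steps, however, would fail as written. First, replacing the minimizing sequence by $(|u_n|,|v_n|)$ does \emph{not} preserve $I$ and $J$: every term is invariant under taking absolute values except the coupling terms $-\lambda\int u_nv_n\,dx$ and $-8\lambda\int u_nv_n\,dx$, and since $\lambda>0$ and $\int|u_n||v_n|\,dx\ge\int u_nv_n\,dx$ one only gets $I(|u_n|,|v_n|)\le I(u_n,v_n)$ and $J(|u_n|,|v_n|)\le J(u_n,v_n)=0$, so in general $(|u_n|,|v_n|)\notin\mathcal{M}$. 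The paper handles sign at the very end: once a minimizer $(u,v)$ is known to be a critical point, it projects $(|u|,|v|)$ back onto $\mathcal{M}$ with some $t_0$ and uses $I(|u|^{t_0},|v|^{t_0})\le I(u^{t_0},v^{t_0})\le I(u,v)=m$; note also that the strong maximum principle can only be invoked after the deformation/degree step, since it needs the pair to actually solve the system, so your ordering (positivity before criticality) must be reversed.

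Second, the claim that the Br\'ezis--Lieb splittings directly give $I(u^\ast,v^\ast)\le m$ and $J(u^\ast,v^\ast)\le 0$ is a gap: neither $I$ nor $J$ is weakly lower semicontinuous (both contain the indefinite coupling term and negative Choquard terms). The paper's mechanism is the auxiliary functional $\Phi=I-\tfrac18 J$, whose surviving terms (gradient terms and Choquard terms with positive coefficients) are all nonnegative, so weak lower semicontinuity and the nonlocal Br\'ezis--Lieb lemma apply to it. The inequality $J(u^\ast,v^\ast)\le 0$ is then proved by contradiction (if $J(u^\ast,v^\ast)>0$ then $J(\omega_n,\sigma_n)<0$ for large $n$, and the energy inequality
$I(\omega_n,\sigma_n)\ge I((\omega_n)^{t_n},(\sigma_n)^{t_n})+\tfrac{1-t_n^8}{8}J(\omega_n,\sigma_n)$ forces $\Phi(\omega_n,\sigma_n)\ge m$, hence $\Phi(u^\ast,v^\ast)\le 0$ and $(u^\ast,v^\ast)=(0,0)$), and afterwards the chain $m\ge \Phi(u^\ast,v^\ast)\ge I((u^\ast)^{t},(v^\ast)^{t})-\tfrac{t^8}{8}J(u^\ast,v^\ast)\ge m$ pins down $J(u^\ast,v^\ast)=0$ and $I(u^\ast,v^\ast)=m$. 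Your endgame (``if $J<0$, project with $t_0<1$ and get a value below $m$'') is the right idea, but it needs exactly this quantitative energy inequality (the paper's Lemma on $I(u,v)-I(u^t,v^t)$), which your sketch does not supply; with these two repairs your argument coincides with the paper's proof.
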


\begin{Th}\label{Th:critical} 
Assume that \ref{assumption:potential} holds.
\begin{enumerate}
\item[\textup{(1)}] Suppose that $\frac{3+\alpha}{3} < p < 3 + \alpha$ and $q = 3 + \alpha$. Then, for any fixed $\nu > 0$, there exists $\mu_0 > 0$ such that if $\mu \ge \mu_0$, system \eqref{eq:NKC} admits a ground state solution.

\item[\textup{(2)}] Suppose that $\frac{3+\alpha}{3} < q < 3 + \alpha$ and $p = \frac{3+\alpha}{3}$. Then, for any fixed $\mu > 0$, there exists $\nu_0 > 0$ such that if $\nu \ge \nu_0$, system \eqref{eq:NKC} admits a ground state solution.
\end{enumerate}
\end{Th}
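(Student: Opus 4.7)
\medskip

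\textbf{Plan.} My plan is to adapt the Nehari--Pohozaev manifold approach developed for Theorem \ref{Th:non-critical} (which itself follows Tang--Chen \cite{Tang-Chen} and Ueno \cite{Tatsuya}) to the two critical regimes. First I would introduce the Nehari--Pohozaev manifold
\begin{equation*}
\mathcal{M} = \{(u,v) \in H \setminus \{(0,0)\} : J(u,v) = 0\},
\end{equation*}
where $J$ is a suitable linear combination of the Nehari and Pohozaev identities (so that $J(u,v) = 0$ whenever $I'(u,v) = 0$), and set $c := \inf_{\mathcal{M}} I$. The coercivity of $I|_{\mathcal{M}}$ and positivity $c > 0$ will continue to hold in both critical cases because the Kirchhoff quartic terms $b_i\bigl(\int_{\mathbb{R}^3}|\nabla\cdot|^2\,dx\bigr)^2$ dominate the critical nonlocal terms under the Nehari--Pohozaev scaling; the genuinely new difficulty is the failure of compactness, caused by concentration in case (1) and by translation to infinity or vanishing in case (2).

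\medskip

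\textbf{Compactness threshold and limit problem.} The next step is to identify a threshold $c^* > 0$ such that any bounded Palais--Smale sequence of $I|_{\mathcal{M}}$ at a level $c < c^*$ admits a strongly convergent subsequence in $H$. In case (1) the threshold is dictated by the Hardy--Littlewood--Sobolev inequality
\begin{equation*}
\int_{\mathbb{R}^3}(I_\alpha*|w|^{3+\alpha})|w|^{3+\alpha}\,dx \le C\Bigl(\int_{\mathbb{R}^3}|\nabla w|^2\,dx\Bigr)^{3+\alpha}
\end{equation*}
together with the Kirchhoff coefficient $b_2$; the limit problem is the scale-invariant critical Kirchhoff--Choquard equation on the $v$--component. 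In case (2), the threshold is determined by the dual lower-critical HLS inequality, which controls the Choquard term by the $L^2$--norm, and the loss of compactness manifests through translations rather than concentrations. In both cases a concentration-compactness argument in the spirit of Moroz--Van Schaftingen \cite{Moroz-Schaftingen-2}, adapted to the coupled Kirchhoff setting, should yield the desired compactness below $c^*$.

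\medskip

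\textbf{Energy estimate, conclusion, and main obstacle.} To push $c$ below $c^*$ I would exploit the freedom in $\mu$ (resp.\ $\nu$). In case (1), fix a ground state $u_0$ of the scalar subcritical Kirchhoff--Choquard equation with exponent $p$ (whose existence is guaranteed by Chen--Liu \cite{Chen-Liu-2}) and a smooth cutoff $v_0 \in C_c^\infty(\mathbb{R}^3)$, and project $(s u_0, t v_0)$ onto $\mathcal{M}$; since $\mu$ only multiplies the subcritical term on the first component, the projected energy tends to $0$ as $\mu \to \infty$, so $c < c^*$ for $\mu$ large enough. Case (2) is handled symmetrically with $\nu$ and $v_0$ replacing $\mu$ and $u_0$. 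Once $c < c^*$ is secured, a minimizing sequence converges strongly to some $(u_*,v_*) \in \mathcal{M}$, which is promoted to a critical point of $I$ via the deformation-lemma-plus-degree argument of Tang--Chen \cite{Tang-Chen} (adapted to the coupled system in Ueno \cite{Tatsuya}); positivity of $u_*, v_*$ follows from replacing $(u_*,v_*)$ with $(|u_*|,|v_*|)$, using $\lambda > 0$ and the strong maximum principle. I expect the main obstacle to be the sharp identification of $c^*$: because the Kirchhoff quartic persists in the limit equation, the Aubin--Talenti-type extremals of the pure Choquard problem do not diagonalize the minimization, and $c^*$ must be obtained from a Kirchhoff--Choquard scaling optimization. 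Assumption \ref{assumption:potential} plays an essential role throughout by keeping the quadratic form $\int_{\mathbb{R}^3}(V_1 u^2 + V_2 v^2 - 2\lambda uv)\,dx$ coercive, without which the projection onto $\mathcal{M}$ underlying the test-function estimate would not be well-defined.
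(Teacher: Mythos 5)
Your overall strategy coincides with the paper's: minimize $I$ on the Nehari--Pohozaev manifold $\mathcal{M}$, push the level $m_{\mu,\nu}$ down by taking $\mu$ (resp.\ $\nu$) large, restore compactness below a threshold tied to the critical Hardy--Littlewood--Sobolev inequality, and promote the minimizer to a critical point by the deformation-plus-degree argument. The large-parameter mechanism you describe is exactly Lemma~\ref{lem:critical-lemma} of the paper (a single cutoff $\phi$, projected by the one-parameter scaling $t\mapsto(\phi^t,\phi^t)$ of Lemma~\ref{lem:tuv_in_M}, gives $t_\mu\to 0$ and $m_{\mu,\nu}\to 0$); note that a two-parameter projection of $(su_0,tv_0)$ onto $\mathcal{M}$ is not the natural operation here, since $\mathcal{M}$ is cut out by the single constraint $J=0$ and the projection is along the scaling fiber, and no scalar ground state from Chen--Liu is needed.

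The point where your plan diverges, and where it has a real gap as written, is the compactness step. You propose to identify a \emph{sharp} threshold $c^*$ coming from a scale-invariant limit Kirchhoff--Choquard problem (with the quartic term persisting), to prove that bounded Palais--Smale sequences below $c^*$ converge \emph{strongly}, and you yourself flag the sharp identification of $c^*$ as the main obstacle. Neither ingredient is established in your proposal, and neither is needed: because $m_{\mu,\nu}\to 0$, any fixed positive threshold suffices, and the paper uses the crude explicit levels $\frac{\alpha+1}{4(3+\alpha)}\nu\bigl(\frac{a_2}{\nu}\mathcal{S}^*\bigr)^{\frac{3+\alpha}{2+\alpha}}$ in case (1) and the analogous constant built from $V_1(1-\delta)\mathcal{S}_*$ in case (2), obtained directly from \eqref{eq:upper_critical_constant} resp.\ \eqref{eq:lower_critical_constant} after discarding the nonnegative Kirchhoff terms (so $b_2$ plays no role in the threshold, contrary to what you suggest). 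Moreover, the paper never proves strong convergence of Palais--Smale sequences: it only rules out vanishing of a minimizing sequence (Lions' lemma kills the subcritical Choquard term, the constraint $J=0$ then leaves the critical term, and the dichotomy ``$A(\mu)=0$ contradicts Lemma~\ref{Lem:positive on NP}, $A(\mu)>0$ contradicts the level bound'' gives \eqref{eq:compactness for critical-1}), after which the translated sequence has a nontrivial weak limit and the argument of Theorem~\ref{Th:non-critical}, based on the splitting identities of Lemma~\ref{Lem:weak limit identity}, is repeated verbatim. So your plan would work only after you supply the compactness analysis you deferred, whereas the paper's route sidesteps both the sharp-threshold computation and strong PS convergence entirely.
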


Finally, we present a nonexistence result for the case where $p$, $q$ are same critical exponents. 

\begin{Th}\label{Th:nonexistence}
Assume that \ref{assumption:potential} holds, and suppose that $p = q = 3 + \alpha$ or $p = q = \frac{3+\alpha}{3}$.  
Then system \eqref{eq:NKC} has no nontrivial solution in $H$.
\end{Th}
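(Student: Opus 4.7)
The plan is to combine two integral identities satisfied by any weak solution $(u,v)\in H$ of \eqref{eq:NKC}: the Nehari-type identity obtained by testing the system with $(u,0)$ and $(0,v)$ and summing, and the Pohozaev-type identity obtained by testing with $(x\cdot\nabla u,\, x\cdot\nabla v)$. At the exactly critical exponents $p=q=3+\alpha$ and $p=q=\tfrac{3+\alpha}{3}$, the two identities become linearly dependent in the Choquard terms, so a well-chosen linear combination of them collapses to a quantity whose positivity is controlled either by \ref{assumption:potential} or by the positivity of $a_1,a_2,b_1,b_2$.

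The Nehari identity is standard. For the Pohozaev identity, I would multiply the first equation of \eqref{eq:NKC} by $x\cdot\nabla u$ and the second by $x\cdot\nabla v$, integrate over $\mathbb{R}^3$, and add, invoking the coupled-system regularity result already established in the paper to legitimise the integrations by parts. The two coupling terms combine via the symmetry
\begin{align*}
\int_{\mathbb{R}^3} v\,(x\cdot\nabla u)\,dx + \int_{\mathbb{R}^3} u\,(x\cdot\nabla v)\,dx = -3\int_{\mathbb{R}^3} uv\,dx,
\end{align*}
while the standard rescaling relation $\int_{\mathbb{R}^3}(I_\alpha*g)(x\cdot\nabla g)\,dx = -\tfrac{3+\alpha}{2}\int_{\mathbb{R}^3}(I_\alpha*g)g\,dx$, applied with $g=|u|^p$ and $g=|v|^q$, produces coefficients $\tfrac{3+\alpha}{2p}$ and $\tfrac{3+\alpha}{2q}$ in front of the two Choquard terms in the resulting Pohozaev identity.

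In the upper critical case $p=q=3+\alpha$ both Choquard coefficients reduce to $\tfrac{1}{2}$, so twice the Pohozaev identity minus the Nehari identity collapses to
\begin{align*}
V_1\int_{\mathbb{R}^3} u^2\,dx + V_2\int_{\mathbb{R}^3} v^2\,dx - 2\lambda\int_{\mathbb{R}^3} uv\,dx = 0.
\end{align*}
Assumption \ref{assumption:potential} together with the pointwise bound $2\lambda|uv|\le 2\delta\sqrt{V_1V_2}\,|uv|\le \delta(V_1u^2+V_2v^2)$ then yields $(1-\delta)\int_{\mathbb{R}^3}(V_1u^2+V_2v^2)\,dx\le 0$, forcing $u=v=0$. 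In the lower critical case $p=q=\tfrac{3+\alpha}{3}$ both coefficients instead equal $\tfrac{3}{2}$, and $\tfrac{2}{3}$ times the Pohozaev identity minus the Nehari identity eliminates the potential, coupling, and Choquard terms, leaving
\begin{align*}
a_1\int_{\mathbb{R}^3}|\nabla u|^2\,dx + a_2\int_{\mathbb{R}^3}|\nabla v|^2\,dx + b_1\Bigl(\int_{\mathbb{R}^3}|\nabla u|^2\,dx\Bigr)^{\!2} + b_2\Bigl(\int_{\mathbb{R}^3}|\nabla v|^2\,dx\Bigr)^{\!2} = 0.
\end{align*}
Since $a_1,a_2,b_1,b_2>0$ this forces $\nabla u=\nabla v=0$ almost everywhere, and then $(u,v)\in H$ gives $u=v=0$.

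The only non-routine point is the justification of the Pohozaev identity itself: \emph{a priori} $(u,v)$ is only in $H$, and the rescaling argument producing the Choquard factor requires additional integrability and differentiability of both components, further complicated by the linear coupling. This is precisely what the coupled-system regularity result announced in the introduction is designed to supply, after which the nonexistence assertion reduces to the elementary algebraic cancellations above.
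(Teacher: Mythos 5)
Your proposal is correct and follows essentially the same route as the paper: combine the Nehari identity $\langle I'(u,v),(u,v)\rangle=0$ with the Pohozaev identity of Lemma \ref{Lem:Pohozaev} (whose validity rests on the coupled-system regularity result, exactly as you note), and observe that at $p=q=3+\alpha$ the suitable linear combination leaves only $V_1\|u\|_2^2+V_2\|v\|_2^2-2\lambda\int uv\,dx$, killed by \ref{assumption:potential}, while at $p=q=\frac{3+\alpha}{3}$ it leaves only the (positive) gradient and Kirchhoff terms. Your linear combinations ($2P-N$ and $N-\tfrac23 P$) are scalar multiples of the paper's ($P-\tfrac12 N$ and $\tfrac32 N-P$), so the argument is the same up to normalization.
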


For the case $p = 3 + \alpha$, $q = \frac{3 + \alpha}{3}$, the existence or nonexistence of a ground state solution remains an open problem.

  Throughout the paper we use the following notations:
    \begin{itemize}
        \item $C, C_1, C_2, \ldots$ denote positive constants possibly different.
        \item $B_R(x_0)$ denotes the open ball centered at $x_0 \in \mathbb{R}^3$ and radius $R > 0$.
        \item For $N\in\mathbb{N}$, a domain $\Omega\subset\mathbb{R}^N$ and $1\le s<\infty$ $L^s(\mathbb{R}^N)$ denotes the Lebesgue space with the norm $\|w\|_{L^s(\Omega)} = (\int_{\Omega}|w|^s\,dx)^{\frac{1}{s}}$. When $\Omega=\mathbb{R}^N$ we  use $\|w\|_{s}$ to express $\|w\|_{L^s(\mathbb{R}^N)}$.
        \item For any $w \in H^1(\mathbb{R}^3)$, we define $w^t(x) \coloneqq tw(t^{-2}x)$ for $t > 0$.
        \item We denote by $\mathcal{S}_N$ the best constant of the embedding $D^{1, 2}(\mathbb{R}^N) \hookrightarrow L^{2^*}(\mathbb{R}^N)$:
        \begin{align}
            \mathcal{S}_N\left(\int_{\mathbb{R}^N} |w|^{2^*}\,dx\right)^{\frac{2}{2^*}} \le \int_{\mathbb{R}^N} |\nabla w|^2\,dx,     \label{eq:Sobolev}
        \end{align}
        where $D^{1, 2}(\mathbb{R}^N) \coloneqq \{w \in L^{2^*}(\mathbb{R}^N) \mid |\nabla w| \in L^2(\mathbb{R}^N)\}$ and $2^*=2N/(N-2)$ for $N\ge 3$, that is,
        \begin{align}\label{eq:Sobolev-best}
            \mathcal{S}_N = \inf_{w \in D^{1, 2}(\mathbb{R}^N)} \frac{\|\nabla w\|_2^2}{\|w\|_{2^*}^2}.
        \end{align}
        \item When we treat the case where $p=(3+\alpha)/3$ or $q=3+\alpha$ the following two inequalities, which are the special case of the Hardy--Littlewood--Sobolev inequality, play a crucial role. We denote by $\mathcal{S}^*$ the best constant of the inequality 
        \begin{align}\label{eq:upper_critical_constant} 
            \mathcal{S}^*\left(\int_{\mathbb{R}^3} (I_{\alpha}*|w|^{3+\alpha})|w|^{3+\alpha}\,dx\right)^{\frac{1}{3+\alpha}} \le \int_{\mathbb{R}^3} |\nabla w|^2\,dx    \ \ \ \mbox{for}\ \ \ w\in D^{1,2}(\mathbb{R}^3) 
        \end{align}
        where $D^{1, 2}(\mathbb{R}^3) \coloneqq \{w \in L^6(\mathbb{R}^3) \mid |\nabla w| \in L^2(\mathbb{R}^3)\}$, that is,
        \begin{align*}
            \mathcal{S}^* = \inf_{w \in D^{1, 2}(\mathbb{R}^3)} \frac{\|\nabla w\|_2^2}{\displaystyle\left(\int_{\mathbb{R}^3}(I_{\alpha}*|w|^{3+\alpha})|w|^{3+\alpha}dx\right)^{\frac{3}{3+\alpha}}}.
        \end{align*}
        (see Moroz and Shaftingen \cite{Moroz-Schaftingen-3} and Seok \cite{Seok}) We denote by $\mathcal{S}_*$ the best constant of the inequality 
        \begin{align}\label{eq:lower_critical_constant} 
            \mathcal{S}_*\left(\int_{\mathbb{R}^3} (I_{\alpha}*|w|^{\frac{3+\alpha}{3}})|w|^{\frac{3+\alpha}{3}}\,dx\right)^{\frac{3}{3+\alpha}} \le \int_{\mathbb{R}^3} |w|^2\,dx    \ \ \ \mbox{for}\ \ \ w\in L^2(\mathbb{R}^3), 
        \end{align}
        that is,
        \begin{align*}
            \mathcal{S}_* = \inf_{w \in L^{2}(\mathbb{R}^3)} \frac{\|w\|_2^2}{\displaystyle\left(\int_{\mathbb{R}^3}(I_{\alpha}*|w|^{\frac{3+\alpha}{3}})|w|^{\frac{3+\alpha}{3}}dx\right)^{\frac{3}{3+\alpha}}}.
        \end{align*}
(see Moroz and Shaftingen \cite{Moroz-Schaftingen-3} and  Seok \cite{Seok}).
    \end{itemize}

This paper is organized as follows. In Section~2, we present some preliminary results.  
In Section~3, we establish the regularity of weak solutions and prove the corresponding Pohozaev identity by providing a detailed and self-contained proof.   

In Sections~4--6, we study the existence of ground state solutions via a minimization problem on a suitably defined Nehari--Pohozaev manifold.  
In Section~4, we introduce the Nehari--Pohozaev manifold and investigate its basic properties.  
Section~5 is devoted to the proof of Theorem~\ref{Th:non-critical}, dealing with the non-critical case.  
Finally, in Section~6, we prove Theorem~\ref{Th:critical}, the upper half and lower half critical cases. Proof of Theorem \ref{Th:nonexistence} will be given in Section~7.

\section{Preliminary Results}

To deal with the Choquard type nonlocal terms, the following Hardy--Littlewood--Sobolev inequality will be frequently used.
\begin{Lem}[Hardy--Littlewood--Sobolev inequality {\cite[Theorem 4.3]{Lieb-Loss}}]\label{H-L-S-ineq}
Let $r$, $s>1$ and $N\in\mathbb{N}$ be numbers which satisfy
\begin{align*}
\frac{1}{r}+\frac{1}{s}=1+\frac{\alpha}{N}.
\end{align*}
There exists a positive constant $C_{\mathrm{HLS}}(N,\alpha, r)$ such that
\begin{align*}
\left|\int_{\mathbb{R}^N}\int_{\mathbb{R}^N}\frac{f(x)h(y)}{|x-y|^{N-\alpha}}dxdy\right|\le C_{\mathrm{HLS}}(N,\alpha, r)\|f\|_r\|h\|_s.
\end{align*}
for all $f\in L^r(\mathbb{R}^N)$ and $h\in L^s(\mathbb{R}^N)$.
\end{Lem}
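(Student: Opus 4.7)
The strategy is to reduce the claimed bilinear inequality to a linear convolution bound via duality, and then establish that linear bound using weak-type estimates. Setting $K(x) := |x|^{-(N-\alpha)}$, the double integral can be rewritten as $\int_{\mathbb{R}^N} (K * f)(y)\, h(y)\, dy$. An application of H\"older's inequality with exponents $s$ and $s' = s/(s-1)$ reduces the problem to proving the linear bound $\|K*f\|_{s'} \le C(N,\alpha,r) \|f\|_r$ whenever the scaling relation $1/r - 1/s' = (N-\alpha)/N$ holds, and this relation is precisely equivalent to the stated hypothesis $1/r + 1/s = 1 + \alpha/N$.

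To obtain the linear estimate, I would first observe that $K$ belongs to the weak Lebesgue space $L^{p,\infty}(\mathbb{R}^N)$ with $p = N/(N-\alpha)$: a direct computation of the distribution function yields $|\{x \in \mathbb{R}^N : K(x) > t\}| = c_N\, t^{-p}$, which gives an explicit weak-$L^{p}$ quasi-norm depending only on $N$ and $\alpha$. The desired strong convolution bound then follows from the weak-type Young convolution inequality. The standard way to prove this is to split $K$ at an adjustable level $\lambda > 0$ into an $L^\infty$ piece $K\mathbf{1}_{\{K \le \lambda\}}$ and an $L^1$ piece $K\mathbf{1}_{\{K > \lambda\}}$ whose norms depend explicitly on $\lambda$, apply ordinary Young's inequality to each piece, estimate the distribution function of $K*f$ via Chebyshev's inequality, optimize $\lambda$ as a function of the height $t$, and finally invoke the Marcinkiewicz interpolation theorem between two weak-type endpoints straddling the target pair $(r, s')$ to upgrade the weak estimate to a strong one.

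The main obstacle is the weak-type Young inequality itself, specifically the optimization step and the correct choice of interpolation endpoints. The hypothesis $r, s > 1$ is essential here: it guarantees that both $r$ and $s' = s/(s-1)$ lie strictly between $1$ and $\infty$, so the Marcinkiewicz interpolation can be carried out between two auxiliary exponent pairs on either side of $(r, s')$ without touching the excluded endpoints $1$ and $\infty$, where the inequality is known to fail. An alternative route, closer in spirit to the original proof of Lieb reproduced in Lieb--Loss, bypasses interpolation entirely by using the Riesz rearrangement inequality to reduce to radial nonincreasing $f$ and $h$, applying the layer-cake formula to decompose them into superpositions of characteristic functions of balls, and then evaluating the resulting integrals by elementary calculus; either route establishes the existence of the constant $C_{\mathrm{HLS}}(N,\alpha,r)$ claimed in the statement.
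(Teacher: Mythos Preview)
The paper does not give its own proof of this lemma; it is simply quoted from \cite[Theorem 4.3]{Lieb-Loss} and used as a tool. Your outline is a correct sketch of two standard proofs of the Hardy--Littlewood--Sobolev inequality (duality plus weak-type Young/Marcinkiewicz interpolation, or the Riesz rearrangement/layer-cake argument of Lieb--Loss), so there is nothing to compare against here.
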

We note that since the embedding $H^1(\mathbb{R}^N)\hookrightarrow L^s(\mathbb{R}^N)$ $s\in (2,2N/(N-2)]$ is continuous, 
if $w\in H^1(\mathbb{R}^N)$ and $\frac{N+\alpha}{N}\le p\le \frac{N+\alpha}{N-2}$ then by using the Hardy--Littlewood--Sobolev inequality with $r=s=2N/(N+\alpha)$ we have
\begin{align}\label{eq:convolution-term-est}
\begin{split}
\int_{\mathbb{R}^3}(I_{\alpha}*|w|^p)|w|^pdx&\le C_{\mathrm{HLS}}(N,\alpha, r)\||w|^p\|_r\||w|^p\|_r \\
&=C_{\mathrm{HLS}}(N,\alpha, r)\left(\int_{\mathbb{R}^3}|w|^{\frac{2Np}{N+\alpha}}\right)^{\frac{N+\alpha}{N}} \\
&\le C_{\mathrm{HLS}}(N,\alpha, \alpha)C_{\mathrm{S}}(N,\alpha,p)^{2p}\|w\|_{H^1(\mathbb{R}^N)}^{2p},
\end{split}
\end{align}
where $C_{\mathrm{S}}(N,\alpha,p)$ is a constant determined by the embedding $H^1(\mathbb{R}^N)\hookrightarrow L^{2Np/(N+\alpha)}(\mathbb{R}^N)$.

Let us recall the nonlocal version of the Brezis--Lieb lemma. 
\begin{Lem}\label{lem:nonlocal-B-L}
Let $N\in\mathbb{N}$, $\alpha\in (0,N)$, $p\in (\frac{N+\alpha}{2N}, \infty)$ and $\{w_n\}$ be a bounded sequence in $L^{\frac{2Np}{N+\alpha}}(\mathbb{R}^N)$. If $w_n\to w$ almost everywhere on $\mathbb{R}^N$, then
\begin{align*}
\lim_{n\to\infty}\left\{\int_{\mathbb{R}^N}(I_{\alpha}*|w_n|^p)|w_n|^pdx-\int_{\mathbb{R}^N}(I_{\alpha}*|w_n-w|^p)|w_n-w|^pdx\right\}=\int_{\mathbb{R}^N}(I_{\alpha}*|w|^p)|w|^pdx. 
\end{align*}
\end{Lem}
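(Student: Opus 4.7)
The plan is to isolate the Choquard bilinear form through an algebraic expansion and to reduce matters to an $L^r$-convergence for the pointwise-vanishing remainder $|w_n|^p - |w|^p - |w_n-w|^p$, where $r := \frac{2N}{N+\alpha}$. Set $a_n := |w_n|^p$, $b_n := |w_n - w|^p$, and $c := |w|^p$. The hypothesis $p > \frac{N+\alpha}{2N}$ gives $pr > 1$, so $\{a_n\}$, $\{b_n\}$, and $c$ all lie uniformly in $L^r(\mathbb{R}^N)$, and by Lemma~\ref{H-L-S-ineq} the bilinear form $B(f,g) := \int_{\mathbb{R}^N}(I_\alpha * f)g\,dx$ is continuous and symmetric on $L^r \times L^r$ with $I_\alpha * c \in L^{r'}(\mathbb{R}^N)$ where $\tfrac{1}{r}+\tfrac{1}{r'}=1$.

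Using bilinearity and the identity $B(a_n,a_n)-B(b_n,b_n) = B(a_n-b_n,\,a_n+b_n)$, together with the splitting $a_n-b_n = c+d_n$ where $d_n := |w_n|^p - |w|^p - |w_n - w|^p$, I first expand and rearrange to reach
\begin{align*}
B(a_n,a_n) - B(b_n,b_n) - B(c,c) = 2B(c,\, a_n - c) + B(d_n,\, a_n - c + b_n).
\end{align*}
Since $a_n \to c$ a.e.\ and $\{a_n\}$ is bounded in the reflexive space $L^r$ (recall $r > 1$ because $\alpha < N$), standard arguments yield $a_n \rightharpoonup c$ weakly in $L^r$; combined with $I_\alpha * c \in L^{r'}$ this gives $B(c, a_n - c) \to 0$. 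Everything then reduces to showing $\|d_n\|_r \to 0$, since afterwards H\"older and Lemma~\ref{H-L-S-ineq} supply $|B(d_n,\, a_n - c + b_n)| \le C\|d_n\|_r \|a_n - c + b_n\|_r \to 0$, the second factor being bounded.

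To establish $\|d_n\|_r \to 0$ I would adapt the classical Brezis--Lieb truncation. The elementary inequality $||a+b|^p - |a|^p| \le \varepsilon|a|^p + C_\varepsilon|b|^p$, valid for every $p > 0$ with $C_\varepsilon$ depending only on $\varepsilon$ and $p$, applied pointwise with $a = w_n - w$ and $b = w$ delivers $|d_n| \le \varepsilon b_n + C_\varepsilon' c$, hence (using $r \ge 1$)
\begin{align*}
|d_n|^r \le 2^{r-1}\varepsilon^r b_n^r + 2^{r-1}(C_\varepsilon')^r c^r.
\end{align*}
Setting $Z_n^\varepsilon := \bigl(|d_n|^r - 2^{r-1}\varepsilon^r b_n^r\bigr)_+$, one has $Z_n^\varepsilon \le 2^{r-1}(C_\varepsilon')^r c^r \in L^1(\mathbb{R}^N)$ uniformly in $n$, while $Z_n^\varepsilon \to 0$ a.e.\ because $d_n \to 0$ a.e. Dominated convergence gives $\int Z_n^\varepsilon\,dx \to 0$, so $\int |d_n|^r\,dx \le \int Z_n^\varepsilon\,dx + 2^{r-1}\varepsilon^r\|b_n\|_r^r$ is eventually at most $C\varepsilon^r$, and sending $\varepsilon \to 0$ closes the argument. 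The main obstacle is precisely this step: the naive pointwise bound $|d_n|^r \lesssim b_n^r + c^r$ yields only a uniform $L^1$ bound rather than an integrable dominant, so dominated convergence fails directly; the Brezis--Lieb truncation bypasses this by isolating an integrably dominated piece while absorbing the rest into an arbitrarily small multiple of $\|b_n\|_r^r$.
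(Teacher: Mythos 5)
Your proof is correct and follows essentially the same route as the paper: after expanding the Choquard bilinear form, everything reduces to the Brezis--Lieb truncation step $\left\||w_n|^p-|w_n-w|^p-|w|^p\right\|_{\frac{2N}{N+\alpha}}\to 0$ together with a weak-convergence pairing against $I_\alpha*|w|^p\in L^{\frac{2N}{N-\alpha}}(\mathbb{R}^N)$ via the Hardy--Littlewood--Sobolev inequality. The differences are only cosmetic: the paper groups the expansion into three terms and uses $|w_n-w|^p\rightharpoonup 0$ rather than $|w_n|^p\rightharpoonup |w|^p$ in $L^{\frac{2N}{N+\alpha}}(\mathbb{R}^N)$, and it establishes the truncation step (Lemma \ref{lem:B-L-2}) via Fatou's lemma instead of your dominated-convergence variant.
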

For reader's convenience we give the proof of Lemma \ref{lem:nonlocal-B-L}.  We need a variant of the original Brezis--Lieb Lemma. 
\begin{Lem}\label{lem:B-L-2}
Let $\Omega\subset\mathbb{R}^N$ be a domain, $r\in [1,\infty)$ and $\{w_n\}$ be a bounded sequence in $L^r(\Omega)$. If $\lim_{n\to\infty}w_n(x)=w(x)$ for almost all $x\in\Omega$, then for every $q\in [1,r]$
\begin{align*}
\lim_{n\to\infty}\int_{\Omega}\left||w_n|^q-|w_n-w|^q-|w|^q\right|^{\frac{r}{q}}dx=0.
\end{align*}
\end{Lem}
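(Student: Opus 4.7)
The plan is to adapt the classical proof of the Brezis--Lieb lemma: reduce the assertion to a pointwise elementary inequality and then apply Lebesgue's dominated convergence theorem via a truncation trick. Set $f_n := w_n - w$, so that $f_n(x) \to 0$ for almost every $x \in \Omega$, and $\{f_n\}$ remains bounded in $L^r(\Omega)$, since $w \in L^r(\Omega)$ by Fatou's lemma applied to $|w_n|^r$.

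First I would establish the following elementary inequality: for every $q \in [1,r]$ and every $\epsilon > 0$ there exists $C_\epsilon > 0$ such that, for all $a, b \in \mathbb{R}$,
$$\bigl||a+b|^q - |a|^q\bigr|^{r/q} \le \epsilon\, |a|^r + C_\epsilon\, |b|^r.$$
This is obtained by splitting into the two regimes $|b| \le \delta|a|$ and $|b| > \delta|a|$: in the first, the mean value theorem applied to $t \mapsto |t|^q$ gives $\bigl||a+b|^q - |a|^q\bigr| \le q\,2^{q-1}|a|^{q-1}|b|$, after which Young's inequality (with $\delta$ depending on $\epsilon$) absorbs the $|a|$-factor; in the second, a brute-force estimate yields the $C_\epsilon |b|^r$ bound.

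Next I would apply this inequality with $a = f_n$, $b = w$, together with the convexity bound $(A+B)^{r/q} \le 2^{r/q-1}(A^{r/q}+B^{r/q})$ (valid because $r/q \ge 1$), to obtain the pointwise estimate
$$F_n(x) := \bigl||w_n|^q - |f_n|^q - |w|^q\bigr|^{r/q} \le C\epsilon\, |f_n(x)|^r + K_\epsilon\, |w(x)|^r,$$
with $C, K_\epsilon$ independent of $n$. Define the truncation $G_n^\epsilon := \max\{F_n - C\epsilon |f_n|^r,\, 0\}$. Then $0 \le G_n^\epsilon \le K_\epsilon |w|^r \in L^1(\Omega)$, and $G_n^\epsilon(x) \to 0$ a.e.\ because $f_n(x) \to 0$ and $t \mapsto |t|^q$ is continuous. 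Dominated convergence gives $\int_\Omega G_n^\epsilon\,dx \to 0$. Since $\sup_n \|f_n\|_{L^r(\Omega)}^r < \infty$, it follows that
$$\limsup_{n \to \infty} \int_\Omega F_n\,dx \le C \epsilon \sup_n \|f_n\|_{L^r(\Omega)}^r,$$
and letting $\epsilon \to 0^+$ yields the claim.

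The only delicate point is the elementary inequality with the sharp $\epsilon$-dependence; once it is in place, the truncation plus dominated convergence argument is routine. No compactness of $\Omega$, no smoothness of $w_n$, and no additional integrability beyond the $L^r$-boundedness is needed, which is exactly what will be required when this lemma is invoked in the proof of the nonlocal Brezis--Lieb lemma (Lemma~\ref{lem:nonlocal-B-L}).
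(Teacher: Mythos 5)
Your proposal is correct and takes essentially the same route as the paper: both rest on an $\varepsilon$-weighted elementary inequality giving the pointwise bound $\bigl||w_n|^q-|w_n-w|^q-|w|^q\bigr|^{r/q}\le C\varepsilon|w_n-w|^r+K_\varepsilon|w|^r$, followed by a limit theorem and letting $\varepsilon\to 0$. Your truncation-plus-dominated-convergence step is just the standard variant of the paper's Fatou-applied-to-the-difference step, so the two arguments are interchangeable.
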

\begin{proof}
Although this lemma can be proved by adapting the argument of the proof of Theorem~4.2.7 in~\cite{Willem-functional-analysis}, we provide the proof here for the reader’s convenience.
 Since $\{w_n\}$ is bounded in $L^r(\Omega)$ there exists $M>0$ such that $\|w_n\|_r\le M$ $(n=1,2,\cdots)$. By Fatou's lemma, we see $\|w\|_r\le M$. By a simple consideration we can show that for any $\varepsilon>0$ there exists $c_{\varepsilon}>0$ such that
\begin{align*}
||a+b|^q-|a|^q-|b|^q|\le \varepsilon |a|^q+c_{\varepsilon}|b|^q.
\end{align*}
Hence for $q\in [1,r]$ we have
\begin{align*}
||a+b|^q-|a|^q-|b|^q|^{\frac{r}{q}}\le 2^{\frac{r}{q}}(\varepsilon^{\frac{r}{q}}|a|^r+c_{\varepsilon}^{\frac{r}{q}}|b|^r).
\end{align*}
Thus we obtain
\begin{align*}
||w_n|^q-|w_n-w|^q-|w|^q|^{\frac{r}{q}}\le 2^{\frac{r}{q}}(\varepsilon^{\frac{r}{q}}|w_n-w|^r+c_{\varepsilon}^{\frac{r}{q}}|w|^r).
\end{align*}
and $||w_n|^q-|w_n-w|^q-|w|^q|^{\frac{r}{q}}\in L^1(\mathbb{R})$. 

Since 
\begin{align*}
 2^{\frac{r}{q}}(\varepsilon^{\frac{r}{q}}|w_n-w|^r+c_{\varepsilon}^{\frac{r}{q}}|w|^r)-||w_n|^q-|w_n-w|^q-|w|^q|^{\frac{r}{q}}\to (2c_{\varepsilon})^{\frac{r}{q}}|w|^r\ \ \mbox{a.e.}\ x\in\Omega, 
\end{align*}
by Fatou's lemma and the fact $\|w_n-w\|_{L^r(\Omega)}\le 2M$ it follows that
\begin{align*}
   &\ (2c_{\varepsilon})^{\frac{r}{q}}\int_{\Omega}|w|^rdx \\
\le&\ \liminf_{n\to\infty}\int_{\Omega}\left\{2^{\frac{r}{q}}(\varepsilon^{\frac{r}{q}}|w_n-w|^r+c_{\varepsilon}^{\frac{r}{q}}|w|^r)-||w_n|^q-|w_n-w|^q-|w|^q|^{\frac{r}{q}}\right\}dx \\
\le &\ \liminf_{n\to\infty}\left\{(2\varepsilon)^{\frac{r}{q}}\|w_n-w\|_{L^r(\Omega)}^r+(2c_{\varepsilon})^{\frac{r}{q}}\int_{\Omega}|w|^rdx\right\}-\limsup_{n\to\infty}\int_{\Omega}||w_n|-|w_n-w|^q-|w|^q|^{\frac{r}{q}}dx \\
 \le &\ \liminf_{n\to\infty}\left\{(2\varepsilon)^{\frac{r}{q}}(2M)^r+(2c_{\varepsilon})^{\frac{r}{q}}\int_{\Omega}|w|^rdx\right\}-\limsup_{n\to\infty}\int_{\Omega}||w_n|-|w_n-w|^q-|w|^q|^{\frac{r}{q}}dx \\
=&\ (2\varepsilon)^{\frac{r}{q}}(2M)^r+\left\{(2c_{\varepsilon})^{\frac{r}{q}}\int_{\Omega}|w|^rdx\right\}-\limsup_{n\to\infty}\int_{\Omega}||w_n|-|w_n-w|^q-|w|^q|^{\frac{r}{q}}dx 
\end{align*}
and then
\begin{align*}
\limsup_{n\to\infty}\int_{\Omega}||w_n|-|w_n-w|^q-|w|^q|^{\frac{r}{q}}dx\le (2\varepsilon)^{\frac{r}{q}}(2M)^r. 
\end{align*}
Since $\varepsilon > 0$ was chosen arbitrarily, we conclude that
\begin{align*}
\lim_{n \to \infty} \int_{\Omega} \left|\, |w_n|^q - |w_n - w|^q - |w|^q \, \right|^{\frac{r}{q}} \, dx = 0.
\end{align*}
\end{proof}

Next lemma states that any bounded sequence in 
$L^r(\Omega)$ that converges almost everywhere also converges weakly.

\begin{Lem}[{\cite[Proposition 5.4.7]{Willem-functional-analysis}}]\label{lemma:ae_to_weak}
Let $\Omega\subset\mathbb{R}^N$ be a domain and $1<r<\infty$. Suppose that $\{w_n\}$ is a bounded sequence in $L^r(\Omega)$. If $w_n(x)$ converges $w(x)$ almost every $x\in\Omega$, then $w_n\rightharpoonup w$ weakly in $L^r(\Omega)$.
\end{Lem}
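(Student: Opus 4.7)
The plan is to combine reflexivity of $L^r(\Omega)$ for $1<r<\infty$ with Egorov's theorem in order to identify the weak limit. By the Urysohn subsequence principle, weak convergence of the entire sequence will follow once it is shown that every weakly convergent subsequence of $\{w_n\}$ admits $w$ as its weak limit.

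First, by Fatou's lemma applied to $|w_n|^r$, one has $\|w\|_{L^r(\Omega)}\le \liminf_{n\to\infty}\|w_n\|_{L^r(\Omega)}<\infty$, so $w\in L^r(\Omega)$. Since $\{w_n\}$ is bounded in the reflexive space $L^r(\Omega)$, every subsequence contains a further subsequence $\{w_{n_k}\}$ with $w_{n_k}\rightharpoonup w^{\star}$ in $L^r(\Omega)$ for some $w^{\star}\in L^r(\Omega)$. It therefore suffices to prove $w^{\star}=w$ almost everywhere.

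To identify $w^{\star}$ with $w$, fix an arbitrary $\varphi\in C_c(\Omega)$ and set $K\coloneqq \operatorname{supp}\varphi$, so that $|K|<\infty$. Let $M\coloneqq \sup_n\|w_n\|_{L^r(\Omega)}$. For any $\varepsilon>0$, Egorov's theorem furnishes a measurable set $E_\varepsilon\subset K$ with $|K\setminus E_\varepsilon|<\varepsilon$ on which $w_{n_k}\to w$ uniformly. Writing
\[
\int_\Omega \varphi(w_{n_k}-w)\,dx=\int_{E_\varepsilon}\varphi(w_{n_k}-w)\,dx+\int_{K\setminus E_\varepsilon}\varphi(w_{n_k}-w)\,dx,
\]
the first integral tends to $0$ as $k\to\infty$ by uniform convergence on the finite-measure set $E_\varepsilon$, while by H\"older's inequality with $r'=r/(r-1)$,
\[
\left|\int_{K\setminus E_\varepsilon}\varphi u\,dx\right|\le \|\varphi\|_\infty\,\varepsilon^{1/r'}\,\|u\|_{L^r(\Omega)}\le \|\varphi\|_\infty M\varepsilon^{1/r'}
\]
for both $u=w_{n_k}$ and $u=w$. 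Passing to $\limsup_{k\to\infty}$ and then $\varepsilon\to 0^+$ yields $\int_\Omega\varphi w_{n_k}\,dx\to\int_\Omega\varphi w\,dx$. On the other hand, weak convergence gives $\int_\Omega\varphi w_{n_k}\,dx\to\int_\Omega\varphi w^{\star}\,dx$ (since $\varphi\in L^{r'}(\Omega)$), and hence $\int_\Omega\varphi(w-w^{\star})\,dx=0$ for every $\varphi\in C_c(\Omega)$, so $w=w^{\star}$ almost everywhere in $\Omega$.

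The main obstacle is that $\Omega$ may have infinite measure, which prevents a direct global application of Egorov's theorem. This is circumvented by testing only against compactly supported functions and invoking Egorov's theorem on the finite-measure support of each such test function; the conclusion then extends to arbitrary test functionals in $L^{r'}(\Omega)$ by density of $C_c(\Omega)$ in $L^{r'}(\Omega)$, which is available precisely because $1<r<\infty$ ensures $1<r'<\infty$.
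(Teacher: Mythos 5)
Your argument is correct and complete: Fatou gives $w\in L^r(\Omega)$, reflexivity of $L^r(\Omega)$ for $1<r<\infty$ yields weakly convergent subsequences, the Urysohn subsequence principle reduces the claim to identifying every weak subsequential limit with $w$, and the Egorov-plus-H\"older splitting on the (finite-measure) support of a test function $\varphi\in C_c(\Omega)$ correctly shows $\int_\Omega\varphi w_{n_k}\,dx\to\int_\Omega\varphi w\,dx$, whence $w^\star=w$ a.e. Note, however, that the paper does not prove this lemma at all: it is quoted as Proposition 5.4.7 of Willem's \emph{Functional Analysis} and used as a black box, so there is no in-paper argument to compare with. Your proof is a standard self-contained route; an equally common alternative, after extracting the weakly convergent subsequence, is to invoke Mazur's lemma (strong convergence of convex combinations, hence a.e. convergence along a further subsequence) to identify the weak limit, which avoids Egorov but uses convexity instead. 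The closing remark about density of $C_c(\Omega)$ in $L^{r'}(\Omega)$ is not actually needed for your identification step, since testing against $C_c(\Omega)$ already forces $w=w^\star$ a.e.; it would only be needed if one wanted to prove the weak convergence directly against all functionals in $L^{r'}(\Omega)$ without first extracting a weak limit.
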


We now give the proof of Lemma \ref{lem:nonlocal-B-L}. 

\begin{proof}[Proof of Lemma \ref{lem:nonlocal-B-L}]
We first note that for every $n\in\mathbb{N}$
\begin{align*}
 &\ \int_{\mathbb{R}^N}(I_{\alpha}*|w_n|^p)|w_n|^pdx-\int_{\mathbb{R}^N}(I_{\alpha}*|w_n-w|^p)|w_n-w|^pdx-\int_{\mathbb{R}^3}(I_{\alpha}*|w|^p)|w|^pdx \\
=&\ \int_{\mathbb{R}^N}(I_{\alpha}*(|w_n|^p-|w_n-w|^p))(|w_n|^p-|w_n-w|^p-|w|^p)dx \\
  &\ \ \ \ \ \ \ \ \ \ \ \ + \int_{\mathbb{R}^N}(I_{\alpha}*(|w_n|^p-|w_n-w|^p-|w|^p))(|w_n|^p-|w_n-w|^p)dx  \\
 &\ +2\int_{\mathbb{R}^N}(I_{\alpha}*(|w_n|^p-|w_n-w|^p))|w_n-w|^pdx \\
 =&:I_1+I_2+2I_3
\end{align*}
By choosing $r=\frac{2Np}{N+\alpha}$, $q=p$ in Lemma \ref{lem:B-L-2} we see $|w_n|^p-|w_n-w|^p\to |w|^p$ in $L^{\frac{2N}{N+\alpha}}(\mathbb{R}^N)$. By the Hardy--Littlewood--Sobolev inequality we have
\begin{align*}
|I_1|, |I_2|&\le C_{\mathrm{HLS}}\left(N,\alpha,{\textstyle{\frac{2N}{N+\alpha}}}\right)\left(\|w_n\|_{\frac{2Np}{N+\alpha}}^{\frac{N+\alpha}{2N}}+\|w_n-w\|_{\frac{2N}{N+\alpha}}^{\frac{N+\alpha}{2N}}\right)\left\||w_n|^p-|w_n-w|^p-|w|^p\right\|_{\frac{2N}{N+\alpha}} \\
 &\to 0.
\end{align*}
We next note that, by Lemma \ref{lemma:ae_to_weak}, $|w_n-w|^p\rightharpoonup 0$ in $L^{\frac{2N}{N+\alpha}}(\mathbb{R}^N)$ as $n\to\infty$. Since $I_{\alpha}*|w|^p\in L^{\frac{2N}{N-\alpha}}(\mathbb{R}^N)$ we have
\begin{align*}
|I_3|\le \left|\int_{\mathbb{R}^3}(I_{\alpha}*(|w_n|^p-|w_n-w|^p-|w|^p)\|w_n-w|^pdx\right|+\left|\int_{\mathbb{R}^3}(I_{\alpha}*|w|^p)|w_n-w|^pdx\right|\to 0.
\end{align*}
The proof is now complete.
\end{proof}

Finally, let us recall the following vanishing lemma by P. L. Lions\cite{P.L.Lions}. 
    \begin{Lem}[{\cite[Lemma I.1]{P.L.Lions}, \cite[Lemma 1.21]{Willem}}]\label{Lions-th}
    Suppose that $\{w_n\}$ is a bounded sequence in $H^1(\mathbb{R}^3)$. If $\{w_n\}$ satisfies
    \begin{align*}
\lim_{n\to\infty}\sup_{y\in\mathbb{R}^3}\int_{B_R(y)}|w_n|^2dx=0
    \end{align*}
    for some $R>0$, then $w_n\to 0$ as $n\to\infty$ in $L^r(\mathbb{R}^3)$ for any $r\in (2,6)$.
    \end{Lem}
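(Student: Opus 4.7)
The plan is to combine a covering of $\mathbb{R}^3$ by balls with finite overlap with a single local interpolation inequality, and then to extend the result to arbitrary $r \in (2,6)$ by a global Hölder interpolation.

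First I would fix a countable collection of balls $\{B_i\} = \{B_R(y_i)\}$ whose union is $\mathbb{R}^3$ and which has finite overlap, i.e., there is a constant $K = K(R)$ such that $\sum_i \chi_{B_i}(x) \le K$ for every $x \in \mathbb{R}^3$; such a cover is obtained by placing the centers $y_i$ on a sufficiently fine lattice. Then for any nonnegative integrable $f$ one has
\[
\int_{\mathbb{R}^3} f\,dx \le \sum_i \int_{B_i} f\,dx \le K \int_{\mathbb{R}^3} f\,dx.
\]

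Next I would prove a uniform local inequality at the carefully chosen exponent $10/3$. By Hölder's inequality between $L^2(B_i)$ and $L^6(B_i)$ with weights $4/3$ and $2$, together with the Sobolev embedding $H^1(B_i) \hookrightarrow L^6(B_i)$ whose constant is independent of $i$ by translation invariance, one obtains, for $u \in H^1(\mathbb{R}^3)$,
\[
\int_{B_i} |u|^{10/3}\,dx \le C\, \|u\|_{L^2(B_i)}^{4/3}\, \|u\|_{H^1(B_i)}^{2}.
\]
The exponent $10/3$ is dictated by the requirement that the power of the $H^1$-norm equal exactly $2$, so that after summing over $i$ the term $\sum_i \|u\|_{H^1(B_i)}^2 \le K\|u\|_{H^1(\mathbb{R}^3)}^2$ remains controlled by the bounded global $H^1$-norm. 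Summing the local inequality over $i$, applying the finite overlap, and using the covering gives
\[
\|u\|_{L^{10/3}(\mathbb{R}^3)}^{10/3} \le C K\,\Bigl(\sup_{y \in \mathbb{R}^3} \|u\|_{L^2(B_R(y))}^{4/3}\Bigr)\, \|u\|_{H^1(\mathbb{R}^3)}^{2}.
\]
Plugging in $u = w_n$, the hypothesis forces the supremum factor to $0$, while the $H^1$-norm is bounded, so $w_n \to 0$ in $L^{10/3}(\mathbb{R}^3)$.

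Finally, for an arbitrary $r \in (2,6)$, I would interpolate globally. For $r \in (2, 10/3]$, Hölder yields $\|w_n\|_r \le \|w_n\|_2^{1-\theta}\|w_n\|_{10/3}^{\theta}$ with $\theta \in (0,1]$ determined by $1/r = (1-\theta)/2 + 3\theta/10$, and since $\{w_n\}$ is $L^2$-bounded and $\|w_n\|_{10/3} \to 0$, we get $\|w_n\|_r \to 0$. For $r \in [10/3, 6)$, one instead writes $\|w_n\|_r \le \|w_n\|_{10/3}^{1-\theta}\|w_n\|_6^{\theta}$ and controls $\|w_n\|_6$ by the global Sobolev embedding $H^1(\mathbb{R}^3) \hookrightarrow L^6(\mathbb{R}^3)$, reaching the same conclusion.

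The main obstacle is the identification of the \emph{critical intermediate exponent} $10/3$: for this value (and only this value) the local interpolation yields a factor $\|u\|_{H^1(B_i)}^{2}$ that is additive over the finite overlap cover, and the global $H^1$-bound then closes the argument. A naive attempt to prove the vanishing directly at the desired $r$ produces a power of the $H^1$-norm different from $2$, which cannot be summed without additional loss. Interpolation via the intermediate $L^{10/3}$ norm is what converts the $L^2$-vanishing on balls into $L^r$-vanishing for the full range $r \in (2,6)$.
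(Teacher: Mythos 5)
Your proof is correct, and it is essentially the classical argument for the Lions vanishing lemma (the paper itself states this lemma with citations to Lions and to Willem's Lemma 1.21 and gives no proof): a finite-overlap cover by balls of radius $R$, the local H\"older--Sobolev estimate at the exponent $10/3$ chosen so that the $H^1$-power equals $2$ and sums over the cover, and then global interpolation with the bounded $L^2$ and $L^6$ norms to reach all $r\in(2,6)$. Nothing to add.
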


\section{Regularity and Pohozaev identity}

    Since our approach is a minimization technique over the Nehari--Pohozaev manifold, we need the Pohozaev identity.
    \begin{Lem}[Pohozaev identity]  \label{Lem:Pohozaev}
        If $(u, v) \in H$ is a weak solution to \eqref{eq:NKC}, then $(u, v)$ satisfies the following Pohozaev identity:
        \begin{align}\label{eq:Pohozaev-id}
            P(u, v) &\coloneqq \frac{1}{2}(a_1\|\nabla u\|_2^2 + a_2\|\nabla v\|_2^2) + \frac{3}{2}\left(V_1\|u\|_2^2 + V_2\|v\|_2^2\right)+\frac{1}{2}(b_1\|\nabla u\|_2^4 + b_2\|\nabla v\|_2^4) \\
            &\quad- \frac{3+\alpha}{2p}\mu\int_{\mathbb{R}^3}(I_{\alpha}*|u|^p)|u|^pdx - \frac{3+\alpha}{2q}\nu\int_{\mathbb{R}^3}(I_{\alpha}*|v|^q)|v|^qdx\\ 
            &\quad -3 \lambda \int_{\mathbb{R}^3} uv\,dx = 0.
        \end{align}
    \end{Lem}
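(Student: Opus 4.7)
The plan is to carry out the classical Pohozaev multiplier argument, using $x\cdot\nabla u$ and $x\cdot\nabla v$ as test functions in the two equations of \eqref{eq:NKC} respectively, and then to add the resulting identities. The critical prerequisite is that $(u,v)$ possesses enough regularity and decay for every step to be rigorous; I will appeal to the regularity theorem proved earlier in Section~3, which provides $u,v\in C^2(\mathbb{R}^3)\cap W^{2,2}(\mathbb{R}^3)$ together with the decay estimates needed to pass to the limit in any cutoff procedure. In practice, since $x\cdot\nabla u$ is not itself in $H^1(\mathbb{R}^3)$, I would first test with $\eta_R(x)(x\cdot\nabla u)$, where $\eta_R\in C_c^\infty(\mathbb{R}^3)$ equals $1$ on $B_R$ and is supported in $B_{2R}$, and then let $R\to\infty$; the boundary contributions vanish because of the $L^2$-decay of $u,\nabla u$ and, via the Hardy--Littlewood--Sobolev inequality (Lemma~\ref{H-L-S-ineq}), of the Riesz-convolution term.

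Term by term, I would use four ingredients. First, the Kirchhoff coefficient $A_1:=a_1+b_1\|\nabla u\|_2^2$ is a scalar depending on $u$, so it factors out and one is left with
\begin{align*}
A_1\int_{\mathbb{R}^3}(-\Delta u)(x\cdot\nabla u)\,dx=-\frac{1}{2}A_1\|\nabla u\|_2^2=-\frac{1}{2}a_1\|\nabla u\|_2^2-\frac{1}{2}b_1\|\nabla u\|_2^4,
\end{align*}
and symmetrically for $v$. Second, the standard divergence identity yields
\begin{align*}
V_1\int_{\mathbb{R}^3}u(x\cdot\nabla u)\,dx=\frac{V_1}{2}\int_{\mathbb{R}^3}x\cdot\nabla(u^2)\,dx=-\frac{3}{2}V_1\|u\|_2^2.
\end{align*}
Third, for the Choquard-type term I would exploit the dilation $u_t(x):=u(tx)$: by the change of variables $z=tx$, $w=ty$ in the Riesz double integral,
\begin{align*}
\int_{\mathbb{R}^3}(I_\alpha*|u_t|^p)|u_t|^p\,dx=t^{-(3+\alpha)}\int_{\mathbb{R}^3}(I_\alpha*|u|^p)|u|^p\,dx,
\end{align*}
and differentiating at $t=1$ (justified via Fubini and dominated convergence using the integrability provided by \eqref{eq:convolution-term-est}) gives
\begin{align*}
\mu\int_{\mathbb{R}^3}(I_{\alpha}*|u|^p)|u|^{p-2}u(x\cdot\nabla u)\,dx=-\frac{3+\alpha}{2p}\mu\int_{\mathbb{R}^3}(I_{\alpha}*|u|^p)|u|^p\,dx,
\end{align*}
with the analogous statement for $v$. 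Fourth, the linear coupling terms must be handled jointly: individually $\lambda\int v(x\cdot\nabla u)\,dx$ and $\lambda\int u(x\cdot\nabla v)\,dx$ do not simplify, but their sum equals $\lambda\int x\cdot\nabla(uv)\,dx=-3\lambda\int uv\,dx$.

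Assembling the two equations then produces exactly
\begin{align*}
-\frac{1}{2}(a_1\|\nabla u\|_2^2+a_2\|\nabla v\|_2^2)-\frac{1}{2}(b_1\|\nabla u\|_2^4+b_2\|\nabla v\|_2^4)-\frac{3}{2}(V_1\|u\|_2^2+V_2\|v\|_2^2)\\
=-\frac{3+\alpha}{2p}\mu\int_{\mathbb{R}^3}(I_{\alpha}*|u|^p)|u|^p\,dx-\frac{3+\alpha}{2q}\nu\int_{\mathbb{R}^3}(I_{\alpha}*|v|^q)|v|^q\,dx-3\lambda\int_{\mathbb{R}^3}uv\,dx,
\end{align*}
which, after multiplication by $-1$, is precisely \eqref{eq:Pohozaev-id}. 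The genuine obstacle in this lemma is not the algebra above but the regularity input: once the $C^2\cap W^{2,2}$ statement with sufficient decay is in hand, the computation is routine; without it, every integration by parts and every dominated-convergence argument above would require ad hoc justification. That is why the bulk of the work in Section~3 is devoted to the regularity theorem, and the Pohozaev identity itself emerges as an essentially mechanical consequence.
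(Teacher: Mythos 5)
Your overall strategy coincides with the paper's: freeze the Kirchhoff coefficients $d_i=a_i+b_i\|\nabla\cdot\|_2^2$, test each equation with a cutoff of $x\cdot\nabla u$ (resp.\ $x\cdot\nabla v$), treat the coupling terms jointly so that they combine into $\lambda\int x\cdot\nabla(uv)\,dx=-3\lambda\int uv\,dx$, and pass to the limit as the cutoff is removed; the final bookkeeping of constants is also correct. However, two points need attention. First, you invoke a regularity theorem giving $u,v\in C^2(\mathbb{R}^3)\cap W^{2,2}(\mathbb{R}^3)$ ``together with the decay estimates needed''; what Proposition \ref{prop:reg-02} actually provides is $W^{2,r}_{\mathrm{loc}}(\mathbb{R}^3)$ and $C^2$ regularity, and no pointwise decay is established anywhere in Section~3. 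For the local terms this is harmless: with $\eta_R(x)=\eta(x/R)$ one has $|x|\,|\nabla\eta_R(x)|\le C$ uniformly, so the extra volume terms are dominated by $C(|\nabla u|^2+u^2+|uv|)\in L^1$ and vanish by dominated convergence; but ``boundary contributions vanish by $L^2$-decay'' is not, as stated, an argument.

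The genuine gap is the Choquard term. Your dilation computation only identifies the value the limit \emph{should} have; it does not justify passing to the limit $R\to\infty$ in $\int_{\mathbb{R}^3}(I_\alpha*|u|^p)|u|^{p-2}u\,\eta_R(x)(x\cdot\nabla u)\,dx$, nor the differentiation under the integral sign at $t=1$. Both would require a dominating function of the form $(I_\alpha*|u|^p)|u|^{p-1}|x|\,|\nabla u|$ to be integrable on $\mathbb{R}^3$, and this does not follow from \eqref{eq:convolution-term-est} (which only controls $\int(I_\alpha*|u|^p)|u|^p$) nor from any decay estimate available to you; indeed the un-cutoff integral need not be absolutely convergent a priori. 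The paper resolves exactly this point by keeping the cutoff in place, symmetrizing the double integral via $I_\alpha(x-y)=I_\alpha(y-x)$, integrating by parts, and proving the uniform bound $\left|\frac{(x-y)\cdot(x\varphi(\rho x)-y\varphi(\rho y))}{|x-y|^2}\right|\le\|\varphi\|_\infty+R\|\nabla\varphi\|_\infty$, after which dominated convergence applies with the integrable majorant $C\,I_\alpha(x-y)|u(x)|^p|u(y)|^p$ (Lemma \ref{H-L-S-ineq}). Some argument of this kind—symmetrization plus a uniform bound, or a proved weighted integrability/decay estimate—is indispensable, and it is precisely the step your proposal leaves unjustified.
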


\subsection{Regularity of Solutions to a Linearly Coupled Choquard System}
To get the above Pohozaev identity, the solution is required to be in $H^2_{\rm{loc}}(\mathbb{R}^3)$. To this end we establish the regularity result 
for the following linearly coupled Choqurd system on $\mathbb{R}^N$:
\begin{align}\label{eq:Choquard-system}
\begin{cases}
-d_1\Delta u+V_1u=\mu(I_{\alpha}*|u|^p)|u|^{p-2}u+\lambda u\ \ x\in\mathbb{R}^N, \\
-d_2\Delta v+V_2v=\nu(I_{\alpha}*|v|^q)|v|^{q-2}v+\lambda v\ \ x\in\mathbb{R}^N,\\
u,v\in H^1(\mathbb{R}^N),
\end{cases}
\end{align}
where $N\in\mathbb{N}$, $d_1$, $d_2$, $V_1$, $V_2$, $\mu$, $\nu$ are positive constants and $\lambda\in\mathbb{R}$ is a constant and the Riesz potential $I_{\alpha}$ here is given by 
\begin{align*}
I_{\alpha}(x)=\frac{A_{\alpha}}{|x|^{N-\alpha}}\ \ \mbox{where}\ \ A_{\alpha}=\frac{\Gamma\left(\frac{N-\alpha}{2}\right)}{\Gamma\left(\frac{\alpha}{2}\right)\pi^{\frac{\alpha}{2}}2^{\alpha}}.
\end{align*}
In this subsection, we assume that $p,q\in \left[1+\frac{\alpha}{N}, \frac{N+\alpha}{N-2}\right]$.

For the single equation \eqref{eq:Choquard},
Moroz and Van Schaftingen proved that if $u \in H^1(\mathbb{R}^N)$ is a weak solution to \eqref{eq:Choquard}, then $u \in W_{\mathrm{loc}}^{2,r}(\mathbb{R}^N)$ holds for any $r \ge 1$. This result was established in \cite{Moroz-Schaftingen} for the case $1 + \frac{\alpha}{N} < p < \frac{N + \alpha}{N - 2}$, and extended in \cite{Moroz-Schaftingen-3} to the critical cases $p = 1 + \frac{\alpha}{N}$ and $p = \frac{N + \alpha}{N - 2}$.  Although Yang, Albuquerque, Silva, and Silva mention in \cite{Y-A-S-S} that the Pohozaev identity corresponding to the system \eqref{eq:Choquard-system} can be derived by adapting arguments in \cite[Theorem 3]{Moroz-Schaftingen-3} and \cite[Lemma 6.1]{J.M.do-O}, the proof of the regularity required for the validity of the identity is not provided. In this paper, for the reader’s convenience, we present a detailed proof of the regularity result for weak solutions to \eqref{eq:Choquard-system}, following the approaches of Moroz and Van Schaftingen \cite{Moroz-Schaftingen-3}, Li and Ma \cite{Li-Ma}, and Li, Li, and Tang~\cite{Li-Li-Tang}.

We begin by presenting the following improved integrability result for a linearly coupled linear elliptic system involving nonlocal terms.

\begin{Prop}\label{prop:reg} Let $N\ge 3$ and $\alpha\in (0,N)$. If $H_1$, $H_2$, $K_1$, $K_2\in L^{\frac{2N}{\alpha+2}}(\mathbb{R}^N)+L^{\frac{2N}{\alpha}}(\mathbb{R}^N)$, $\lambda\in\mathbb{R}$ and $(u,v)\in H^1(\mathbb{R}^N)\times H^1(\mathbb{R}^N)$ solves
\begin{align}\label{eq:Choquard-system-aux}
\begin{cases}
-d_1\Delta u+V_1 u=(I_{\alpha}*(H_1u))K_1+\lambda v, \ \ \ x\in\mathbb{R}^N, \\
-d_2\Delta v+V_2 v=(I_{\alpha}*(H_2v))K_2+\lambda u,\ \ \ x\in\mathbb{R}^N,
\end{cases}
\end{align}
where $d_1$, $d_2$, $V_1$, $V_2$ are positive constants and $\lambda$ is a real number, then $u\in L^r(\mathbb{R}^N)$ for $r\in [2, \frac{N}{\alpha} \frac{2N}{N-2})$. Moreover there exists a positive constant $C(r, V_1, V_2, \lambda)$ independent of $u$ such that
\begin{align*}
\|u\|_r+\|v\|_r\le C(r, V_1, V_2, \lambda)(\|u\|_2+\|v\|_2). 
\end{align*}
\end{Prop}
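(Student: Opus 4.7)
The plan is to carry out a nonlinear bootstrap on the sum $\mathcal{N}(r):=\|u\|_r+\|v\|_r$, adapting Proposition~4.1 of Moroz--Van Schaftingen \cite{Moroz-Schaftingen-3} to the linearly coupled system. The Sobolev embedding already gives $\mathcal{N}(r)<\infty$ for $r\in[2,2N/(N-2)]$, and the aim is to push the integrability up to (but not including) the endpoint $\frac{N}{\alpha}\cdot\frac{2N}{N-2}$.

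The core of the argument is a one-step improvement. I would decompose $H_i=H_i^{(1)}+H_i^{(2)}$ and $K_i=K_i^{(1)}+K_i^{(2)}$ with $H_i^{(1)},K_i^{(1)}\in L^{2N/(\alpha+2)}(\mathbb{R}^N)$ and $H_i^{(2)},K_i^{(2)}\in L^{2N/\alpha}(\mathbb{R}^N)$. For $r$ in the target range with $\mathcal{N}(r)<\infty$, H\"older's inequality places $H_iu$ and $H_iv$ in a sum of two $L^{s_j}$ spaces determined by $r$; the Hardy--Littlewood--Sobolev inequality (Lemma \ref{H-L-S-ineq}) then places $I_\alpha*(H_iu)$ in the corresponding $L^{t_j}$ spaces with $1/t_j=1/s_j-\alpha/N$; a second H\"older against $K_i$ yields the Choquard-type product in $L^\tau+L^{\tau'}$ for explicit $\tau,\tau'$. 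Rewriting \eqref{eq:Choquard-system-aux} as $-d_i\Delta w+V_iw=g_i$ and invoking the mapping properties of the Bessel potential $(-d_i\Delta+V_i)^{-1}\colon L^{\sigma}\to L^{\sigma^{**}}$ with $1/\sigma^{**}=1/\sigma-2/N$, I obtain a new exponent $r'>r$ and the one-step inequality $\mathcal{N}(r')\le C\bigl(\mathcal{N}(r)+\mathcal{N}(2)\bigr)$.

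The linear coupling terms $\lambda v$ and $\lambda u$ are already in $L^r$ by the inductive hypothesis and contribute at most $C|\lambda|\mathcal{N}(r)$ to the right-hand side, so running the bootstrap on $\mathcal{N}$ rather than on the components individually absorbs them cleanly. Iterating from $r_0=2$ yields an increasing sequence $r_0<r_1<\cdots$ whose supremum equals $\frac{N}{\alpha}\cdot\frac{2N}{N-2}$, together with estimates $\mathcal{N}(r_k)\le C_k\mathcal{N}(2)$; interpolation between $L^2$ and $L^{r_k}$ then covers every $r$ in the stated open interval and gives the claimed bound $\|u\|_r+\|v\|_r\le C(r,V_1,V_2,\lambda)(\|u\|_2+\|v\|_2)$.

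The main obstacle I expect is the exponent bookkeeping in the one-step gain: one has to verify that each iteration produces a genuinely larger exponent $r'>r$ and that the accumulation point of the $r_k$ coincides with $\frac{N}{\alpha}\cdot\frac{2N}{N-2}$, with the constants $C_k$ blowing up in a controlled way as $r_k$ approaches this value, consistent with the open-interval conclusion. A minor subsidiary issue is to identify the output of the Bessel-potential representation with the original $H^1$ pair $(u,v)$; this follows from uniqueness for the coupled linear system $-d_i\Delta w+V_iw=g_i$ and should present no essential difficulty.
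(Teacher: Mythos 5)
Your plan stalls at the very first step, and the obstruction is structural rather than a matter of bookkeeping. The hypothesis only places $H_i,K_i$ in $L^{\frac{2N}{\alpha+2}}(\mathbb{R}^N)+L^{\frac{2N}{\alpha}}(\mathbb{R}^N)$, and for the component in which \emph{both} factors lie in $L^{\frac{2N}{\alpha+2}}$ (exactly the piece produced by the upper–critical growth $|u|^{\frac{N+\alpha}{N-2}}$ in the application to Proposition \ref{prop:reg-02}) your chain of estimates yields no gain: if $u\in L^r$, H\"older gives $H_iu\in L^{s}$ with $\frac1s=\frac{\alpha+2}{2N}+\frac1r$, Hardy--Littlewood--Sobolev gives $I_\alpha*(H_iu)\in L^{t}$ with $\frac1t=\frac1r+\frac{2-\alpha}{2N}$, the second H\"older puts the product in $L^{\tau}$ with $\frac1\tau=\frac1r+\frac2N$, and the Bessel potential $(-d_i\Delta+V_i)^{-1}\colon L^{\tau}\to L^{\tau^{**}}$ with $\frac1{\tau^{**}}=\frac1\tau-\frac2N$ lands you back exactly in $L^r$. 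Hence $r'=r$ for the worst piece, the claimed one-step inequality with $r'>r$ is not available, and the iteration never gets beyond the Sobolev exponent $\frac{2N}{N-2}$ that you already had for free. Nor can you fix this by splitting off a part of the critical component with small norm and ``absorbing'': the quantity you would need to absorb, $\|u\|_{r'}$, is not yet known to be finite, so the absorption is not justified for the solution itself. (Incidentally, the result of \cite{Moroz-Schaftingen-3} you appeal to is itself proved by a truncation/test-function argument, not by this kind of bootstrap, precisely because of this critical-exponent degeneracy.)

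The paper's proof supplies exactly the missing mechanism. It truncates the $L^{\frac{2N}{\alpha+2}}$-parts of $H_i,K_i$ so that the truncated kernels $\overline H_{j,k},\overline K_{j,k}$ lie in $L^{\frac{2N}{\alpha}}$, constructs approximate solutions $(u_k,v_k)$ of the shifted linear system by Lax--Milgram (with a constant $\kappa$ coming from the $\varepsilon$-absorption estimate, Lemma \ref{lem:reg-lemma} with $\theta=1$), identifies their weak limit with $(u,v)$ by uniqueness, and then runs a Moser-type iteration: testing with $|u_k^{(\eta)}|^{r-2}u_k^{(\eta)}$, the critical nonlocal term is absorbed into the gradient of $|u_k^{(\eta)}|^{\frac r2}$ via Lemma \ref{lem:reg-lemma} with $\theta=\frac2r$ (admissible exactly for $2\le r<\frac{2N}{\alpha}$), and the Sobolev inequality then produces the genuine gain $r\mapsto \frac{N}{N-2}\,r$ with constants independent of $k$ and $\eta$; Fatou's lemma transfers the bounds to $(u,v)$, and iterating over $r<\frac{2N}{\alpha}$ reaches every exponent below $\frac N\alpha\,\frac{2N}{N-2}$, with the linear coupling handled by Young's inequality just as you intended. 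Your treatment of the coupling terms and the final interpolation step are fine, but without an absorption device of this kind (applied to quantities that are a priori finite) your argument does not prove the proposition in the stated generality.
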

To prove Proposition \ref{prop:reg}, we recall the following lemma from \cite{Moroz-Schaftingen-3}. 

\begin{Lem}[{\cite[Lemma 3.2]{Moroz-Schaftingen-3}}]\label{lem:reg-lemma}
Let $N\ge 2$, $\alpha\in (0,N)$ and $\theta\in (0,2)$. If $H$, $K\in L^{\frac{2N}{N+\alpha}}(\mathbb{R}^N)+L^{\frac{2N}{\alpha}}(\mathbb{R}^N)$ and $\frac{\alpha}{N}<\theta<2-\frac{\alpha}{N}$, then for every $\varepsilon>0$, 
there exists a positive constant $C_{\varepsilon,\theta}>0$ such that for every $u\in H^1(\mathbb{R}^N)$,
\begin{align*}
\int_{\mathbb{R}^N}(I_{\alpha}*(H|u|^{\theta}))K|u|^{2-\theta}dx\le \varepsilon^2\int_{\mathbb{R}^N}|\nabla u|^2dx+C_{\varepsilon,\theta}\int_{\mathbb{R}^N}|u|^2dx.
\end{align*}
\end{Lem}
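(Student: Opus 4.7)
The plan is to combine the Hardy--Littlewood--Sobolev inequality with Hölder's inequality, Gagliardo--Nirenberg interpolation, and Young's inequality, after first splitting $H$ and $K$ into a small \emph{singular} piece and a tame \emph{decaying} piece. For the decomposition step, I would note that any $H = A + B$ with $A \in L^{2N/(N+\alpha)}$ and $B \in L^{2N/\alpha}$ can be further refined via truncation: writing $A = A\chi_{\{|A|>M\}} + A\chi_{\{|A|\le M\}}$, the first piece has arbitrarily small $L^{2N/(N+\alpha)}$-norm as $M\to\infty$ (by dominated convergence), and the pointwise bound $|A\chi_{\{|A|\le M\}}|^{2N/\alpha} \le M^{2N/\alpha - 2N/(N+\alpha)}|A|^{2N/(N+\alpha)}$ places the second piece in $L^{2N/\alpha}$. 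Consequently, for any $\eta>0$ I can write $H = H^\sharp + H^\flat$ and $K = K^\sharp + K^\flat$ with $\|H^\sharp\|_{2N/(N+\alpha)}, \|K^\sharp\|_{2N/(N+\alpha)} \le \eta$ and $H^\flat, K^\flat \in L^{2N/\alpha}$ (with norms depending on $\eta$).

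After substituting these decompositions the integral splits into four pieces of the form $\int(I_\alpha*(H^i|u|^\theta))K^j|u|^{2-\theta}\,dx$ for $i,j\in\{\sharp,\flat\}$. I would estimate each by the Hardy--Littlewood--Sobolev inequality (Lemma~\ref{H-L-S-ineq}) with exponents $r, s$ satisfying $1/r+1/s = 1+\alpha/N$, followed by Hölder's inequality to peel off the $L^{p_i}$-norm of $H^i$ and the $L^{p_j}$-norm of $K^j$, with $p_\sharp = 2N/(N+\alpha)$ and $p_\flat = 2N/\alpha$. This leaves behind factors $\|u\|_{\sigma_1}^\theta\|u\|_{\sigma_2}^{2-\theta}$, where the HLS parameters and the Hölder split are free parameters I would tune case by case so that $\sigma_1,\sigma_2$ lie in the Sobolev range $[2, 2^*]$. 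The hypothesis $\alpha/N < \theta < 2 - \alpha/N$ is precisely what makes such a placement possible for every piece.

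Each $\|u\|_{\sigma_i}$ with $\sigma_i\in(2,2^*)$ is then interpolated by Gagliardo--Nirenberg, $\|u\|_{\sigma_i}\le C\|u\|_2^{1-\lambda_i}\|\nabla u\|_2^{\lambda_i}$ with $\lambda_i\in(0,1)$, producing a bound of the form
\[
C\,\|H^{i}\|_{p_i}\|K^{j}\|_{p_j}\,\|u\|_2^{2-2\Lambda_{ij}}\|\nabla u\|_2^{2\Lambda_{ij}},\qquad \Lambda_{ij}\in[0,1),
\]
for each of the four terms. For pieces carrying at least one $\sharp$-factor, the prefactor is at most $\eta$ times the finite $L^{2N/\alpha}$-norms; I would therefore fix $\eta$ small in terms of $\varepsilon$, $\|H^\flat\|_{2N/\alpha}$, and $\|K^\flat\|_{2N/\alpha}$ to drive the $\nabla u$-contribution of these pieces below $(\varepsilon^2/2)\|\nabla u\|_2^2$. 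For the remaining $(\flat,\flat)$-piece, where no smallness is available, I would apply Young's inequality
\[
\|u\|_2^{2-2\Lambda}\|\nabla u\|_2^{2\Lambda}\le \varepsilon'\|\nabla u\|_2^2 + C(\varepsilon')\|u\|_2^2
\]
(valid because $\Lambda<1$), choosing $\varepsilon'$ small in terms of $\varepsilon$ and the $L^{2N/\alpha}$-norms. Summing the four contributions yields the claimed bound with constant $C_{\varepsilon,\theta}$ depending on $\varepsilon$, $\theta$, and the norms of $H$ and $K$.

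The main obstacle is the exponent bookkeeping, especially the pure-$\sharp$-$\sharp$ piece: putting $H^\sharp$ and $K^\sharp$ simultaneously in $L^{2N/(N+\alpha)}$ via Hölder is incompatible with the HLS constraint $1/r+1/s = 1+\alpha/N$ in the standard way, so one is forced either to shift the HLS exponents strictly below $2N/(N+\alpha)$ (pushing $\sigma_i$ toward the endpoints of $[2,2^*]$) or to invoke the positive-definiteness of $I_\alpha$ through a Cauchy--Schwarz trick $\int(I_\alpha*f)g\le(\int(I_\alpha*f)f)^{1/2}(\int(I_\alpha*g)g)^{1/2}$ to symmetrize the estimate. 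Either way, the strict two-sided restriction $\alpha/N<\theta<2-\alpha/N$ is exactly what keeps the Gagliardo--Nirenberg exponents $\Lambda_{ij}$ strictly inside $[0,1)$, and thereby permits Young's inequality to deliver the critical $\varepsilon^2\|\nabla u\|_2^2$ splitting. Verifying the admissibility of the exponent choices in each of the four cases is the technical heart of the proof.
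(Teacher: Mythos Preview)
The paper does not prove this lemma; it merely cites \cite[Lemma~3.2]{Moroz-Schaftingen-3}. Your overall scheme---split $H$ and $K$ into a small-norm part and a tame part, then bound each of the four resulting cross-terms by HLS, H\"older, and Sobolev/interpolation---is exactly the argument in that reference.

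You have, however, correctly located a real obstruction in the $(\sharp,\sharp)$ term, and neither of your proposed fixes can repair it. With $p_\sharp=\tfrac{2N}{N+\alpha}$ the combined HLS--H\"older constraint forces $\theta/a+(2-\theta)/b=0$, which has no solution in positive $a,b$; shifting the HLS exponents only redistributes this zero, and the Cauchy--Schwarz symmetrization leads back to the same dead end. In fact the lemma as printed is false: for $N=3$, $\alpha=1$, $\theta=1$, take $H=K=|x|^{-\beta}\chi_{B_1}$ with $\beta\in(\tfrac32,2)$ (so $H\in L^{3/2}$) and $u\in H^1$ behaving like $|x|^{-\delta}$ near the origin with $\delta<\tfrac12$ and $\beta+\delta>2$; then $\int(I_1*(H|u|))H|u|\,dx=+\infty$. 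The resolution is a typo in the paper's statement: the correct hypothesis, matching both the original \cite{Moroz-Schaftingen-3} and the space that actually appears in Proposition~\ref{prop:reg}, is $H,K\in L^{2N/(\alpha+2)}+L^{2N/\alpha}$. With $p_\sharp=\tfrac{2N}{\alpha+2}$ the $(\sharp,\sharp)$ constraint becomes $\theta/a+(2-\theta)/b=(N-2)/N$, solved by $a=b=2^*$, so Sobolev gives $C\eta^2\|\nabla u\|_2^2$ directly; the $(\flat,\flat)$ constraint becomes $\theta/a+(2-\theta)/b=1$, solved by $a=b=2$, yielding $C\|u\|_2^2$ with no gradient contribution at all (so your Gagliardo--Nirenberg and Young steps are unnecessary there); and each mixed term already carries a factor~$\eta$. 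The two-sided restriction $\alpha/N<\theta<2-\alpha/N$ is precisely what keeps the HLS exponents in $(1,\infty)$ for the $(\flat,\flat)$ piece.
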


\begin{proof}[Proof of Proposition \ref{prop:reg}]
For the simplicity we assume that $d_1=d_2=1$. We follow the proof of \cite[Lemma 2.4]{Li-Ma}, \cite[Proposition 3.1]{Moroz-Schaftingen-3} and \cite[Proposition 2.2]{Li-Li-Tang}. Set $H_j=H_{1j}+H_{2j}$, $K_j=K_{1j}+K_{2j}$ $(j=1,2)$, where $H_{1j}$, $K_{1j}\in L^{\frac{2N}{\alpha}}(\mathbb{R}^N)$, $H_{2j}$, $K_{2j}\in L^{\frac{2N}{2+\alpha}}(\mathbb{R}^N)$ $(j=1,2)$.  By Lemma \ref{lem:reg-lemma} with $\theta=1$, there exists a constant $\kappa>\max\{V_1,V_2\}$ such that for every $w\in H^1(\mathbb{R}^N)$,
\begin{align}\label{eq:reg-01}
\int_{\mathbb{R}^N}(I_{\alpha}*((|H_{1j}|+|H_{2j}|)|w|))((|K_{1j}|+|K_{2j}|)|w|)dx\le\frac{1}{4}\int_{\mathbb{R}^N}|\nabla w|^2dx+\frac{\kappa}{4}\int_{\mathbb{R}^N}|w|^2dx.
\end{align}
For any function $M(x)$ and each $k\in\mathbb{N}$ we define $M^{(k)}$ by
\begin{align*}
M^{(k)}(x)=
\begin{cases}
k &\mbox{if}\ M(x)>k, \\
M(x) &\mbox{if}\ |M(x)|\le k \\
-k  &\mbox{if}\ M(x)<-k.
\end{cases}
\end{align*}
Let $\overline{H}_{j,k}:=H_{1j}+H_{2j}^{(k)}$ and $\overline{K}_{j,k}:=K_{1j}+K_{2j}^{(k)}$. Then $\{\overline{H}_{j,k}\}$ and $\{\overline{K}_{j,k}\}$ satisfy
\begin{align*}
&\overline{H}_{j,k}, \overline{K}_{j,k}\in L^{\frac{2N}{\alpha}}(\mathbb{R}^N), \\
&|\overline{H}_{j,k}|\le |H_{1j}|+|H_{2j}|,\ |\overline{K}_{j,k}|\le |K_{1j}|+|K_{2j}|, \\
&\overline{H}_{j,k}\to H_j,\ \overline{K}_{j,k}\to K_j\ \ \mbox{a.e.\ on}\ \ \mathbb{R}^N.
\end{align*}
For each $j=1,2$ and $k\in\mathbb{N}$ we define a bilinear form $a_j^{(k)}:H^1(\mathbb{R}^N)\times H^1(\mathbb{R}^N)\to\mathbb{R}$ by
\begin{align*}
a_j^{(k)}(\varphi,\psi)=\int_{\mathbb{R}^N}\nabla\varphi\cdot\nabla\psi+\kappa\varphi\psi dx-\int_{\mathbb{R}^N}(I_{\alpha}*(\overline{H}_{j,k}\varphi))\overline{K}_{j,k}\psi.
\end{align*}

By \eqref{eq:reg-01} we see that $a_j^{(k)}$ is bounded and 
\begin{align}\label{eq:reg-02}
a_j^{(k)}(\varphi, \varphi)\ge\frac{1}{2}\int_{\mathbb{R}^N}|\nabla\varphi|^2dx+\frac{\kappa}{2}\int_{\mathbb{R}^N}|\varphi|^2dx
\end{align}
for any $\varphi\in H^1(\mathbb{R}^N)$, $j=1,2$ and $k\in\mathbb{N}$, that is $a_j^{(k)}$ is coercive. 

The Lax--Milgram theorem implies that there exists a unique solution $(u_k, v_k)\in H^1(\mathbb{R}^N)\times H^1(\mathbb{R}^N)$ to the following problem
\begin{align}\label{eq:reg-00}
\begin{split}
&-\Delta u_k+\kappa u_k=(I_{\alpha}*(\overline{H}_{1,k}u_k))\overline{K}_{1,k}+\lambda v+(\kappa-V_1)u\ \ \mbox{in}\ \ \mathbb{R}^N, \\
&-\Delta v_k+\kappa v_k=(I_{\alpha}*(\overline{H}_{2,k}v_k))\overline{K}_{2,k}+\lambda u+(\kappa-V_2)v\ \ \mbox{in}\ \ \mathbb{R}^N,
\end{split}
\end{align}
where $(u, v)\in H^1(\mathbb{R}^N)\times H^1(\mathbb{R}^N)$ is the solution to \eqref{eq:Choquard-system-aux}.  

\noindent
\textbf{Claim:} $u_k\rightharpoonup u$, $v_k\rightharpoonup v$ in $H^1(\mathbb{R}^N)$. 

From \eqref{eq:reg-02} and \eqref{eq:reg-00} we obtain
\begin{align*}
  &\ \frac{1}{2}\int_{\mathbb{R}^N}|\nabla u_k|^2dx+\frac{\kappa}{2}\int_{\mathbb{R}^N}|u_k|^2dx\\
 \le&\ a_1^{(k)}(u_k, u_k) \\
 =&\lambda\int_{\mathbb{R}^N}vu_kdx+(\kappa-V_1)\int_{\mathbb{R}^N}uu_k dx \\
 \le&\ \frac{\kappa}{8}\int_{\mathbb{R}^N}|u_k|^2dx+\frac{2\lambda^2}{\kappa}\int_{\mathbb{R}^N}|v|^2dx+2\frac{(\kappa-V_1)^2}{\kappa}\int_{\mathbb{R}^N}|u|^2dx+\frac{\kappa}{8}\int_{\mathbb{R}^N}|u_k|^2dx
\end{align*}
and then
\begin{align}\label{eq:reg-00-01}
\frac{\min\{2,\kappa\}}{4}\|u_k\|_{H^1(\mathbb{R}^N)}^2\le \frac{2\lambda^2}{\kappa}\int_{\mathbb{R}^N}|v|^2dx+\frac{2(\kappa-V_1)^2}{\kappa}\int_{\mathbb{R}^N}|u|^2dx.
\end{align}
Hence $\{u_k\}$ is bounded in $H^1(\mathbb{R}^N)$. Similary we can show that $\{v_k\}$ is bounded in $H^1(\mathbb{R}^N)$. 

Along a subsequence we may assume that there exists $\overline{u}$, $\overline{v}\in H^1(\mathbb{R}^N)$ such that $u_k\to \overline{u}$, $v_k\to \overline{v}$ in $L^s_{\mathrm{loc}}(\mathbb{R}^N)$ for any $s\in [1,2N/(N-2))$ and $u_k\to \overline{u}$  and $v_k\to \overline{v}$ almost everywhere in $\mathbb{R}^N$. Since $|\overline{K}_{j,k}|\le |K_{1j}|+|K_{2j}|$ and $|\overline{H}_{j,k}|\le |H_{1j}|+|H_{2j}|$, $\{\overline{K}_{j,k}\}_k$ and $\{\overline{H}_{j,k}\}_k$ are bounded in $L^{\frac{2N}{2+\alpha}}(\mathbb{R}^N)+L^{\frac{2N}{\alpha}}(\mathbb{R}^N)$. By Lemma \ref{lemma:ae_to_weak} it holds that $\overline{H}_{j,k}u_k\rightharpoonup H_j\overline{u}$ in $L^{\frac{2N}{N+\alpha}}(\mathbb{R}^N)$.

By the Lebesgue dominated convergence theorem we have $\overline{K}_{j,k}\varphi\to K_j\varphi$ strongly in $L^{\frac{2N}{N+\alpha}}(\mathbb{R}^N)$ for any $\varphi\in C_0^{\infty}(\mathbb{R}^N)$.  

 Let us define the functionals
\begin{align*}
F_j(w)=\int_{\mathbb{R}^N}(I_{\alpha}*w)K_j\varphi dx\ \ (j=1,2)
\end{align*}
for $w\in L^{\frac{2N}{N+\alpha}}(\mathbb{R}^N)$. These are linear functionals on $L^{\frac{2N}{N+\alpha}}(\mathbb{R}^N)$. Moreover, the Hardy--Littlewood--Sobolev inequality implies that
\begin{align*}
|F_j(w)|\le C\|w\|_{\frac{2N}{N+\alpha}}\|K_j\varphi\|_{\frac{2N}{N+\alpha}}.
\end{align*}
This means that $F_j$ is a bounded linear functional on $L^{\frac{2N}{N+\alpha}}(\mathbb{R}^N)$. Hence $F_j(\overline{H}_{j,k}u_k)\to F_j(H_j\overline{u})$, and then
\begin{align}\label{eq:reg-03}
\int_{\mathbb{R}^N}(I_{\alpha}*(\overline{H}_{j,k}u_k))\overline{K}_{j,k}\varphi dx\to\int_{\mathbb{R}^N}(I_{\alpha}*(H_j\overline{u}))K_j\varphi dx.
\end{align}
By \eqref{eq:reg-00} and \eqref{eq:reg-03} we see that $\overline{u}$ satisfies
\begin{align*}
-\Delta\overline{u}+\kappa\overline{u}=(I_{\alpha}*(H_1\overline{u}))K_1+\lambda v+(\kappa-V_1)u\ \ \mbox{in}\ \ \mathbb{R}^N, 
\end{align*}
Since the weak solution to
\begin{align*}
-\Delta w+\kappa w=(I_{\alpha}*(H_1w))K_1+\lambda v+(\kappa-V_1)u\ \ \mbox{in}\ \ \mathbb{R}^N, 
\end{align*}
is unique we conclude that $\overline{u}=u$. Similarly we can conclude that $\overline{v}=v$. 

For $\eta>0$, we define the truncation $u_{k}^{(\eta)}$ of $u_k$ by
\begin{align*}
u_{k}^{(\eta)}(x)=
\begin{cases}
\eta, & u_{k}(x)>\eta, \\
u_k(x), & -\eta\le u_k(x)\le \eta, \\
-\eta & u_k(x)<-\eta.
\end{cases}
\end{align*}
By considering $G_{\mu}\in C^1(\mathbb{R})$ which satisfies $G_{\eta}(s)=|s|^{r-2}s$ for $|s|\le \eta+1$ and $G'\in L^{\infty}(\mathbb{R})$ we see that $G_{\eta}(u_k^{(\eta)})=|u_k^{(\eta)}|^{r-2}u_k^{(\eta)}\in H^1(\mathbb{R}^N)$ for any $r\ge 2$(see \cite[Proposition 9.5]{Brezis}). By taking it as a test function in the weak form of the first equation in \eqref{eq:reg-00} we have
\begin{align*}
&\int_{\mathbb{R}^N}\nabla u_k\cdot \nabla(|u_k^{(\eta)}|^{r-2}u_{k,\eta})dx+\kappa\int_{\mathbb{R}^N}u_k(|u_k^{(\eta)}|^{r-2}u_k^{(\eta)})dx \\
&\ \ \ \ \ \ \ \ \ =\int_{\mathbb{R}^N}(I_{\alpha}*(\overline{H}_{1,k}u_k))\overline{K}_{1,k}|u_k^{(\eta)}|^{r-2}u_k^{(\eta)}dx+\lambda\int_{\mathbb{R}^N}v|u_k^{(\eta)}|^{r-2}u_k^{(\eta)}dx \\
&\ \ \ \ \ \ \ \ \ \ \ \ \ \ \ \ +(\kappa-V_1)\int_{\mathbb{R}^N}u|u_k^{(\eta)}|^{r-2}u_k^{(\eta)}dx.
\end{align*}
Since $\nabla |u_k^{(\eta)}|^{r-2}u_k^{(\eta)}=(r-1)|u_k^{(\eta)}|^{r-2}\nabla u_k^{(\eta)}$, $\nabla u_k^{(\eta)}=0$ on 
\begin{align*}
A_{k,\eta}:=\{x\in\mathbb{R}^N:|u_k(x)|>\eta\}
\end{align*}
and the fact that $|u_k^{(\eta)}|^2\le u_ku_k^{(\eta)}$ on $\mathbb{R}^N$
we have
\begin{align*}
\int_{\mathbb{R}^N}\nabla u_k\cdot \nabla(|u_k^{(\eta)}|^{r-2}u_k^{(\eta)})dx&=(r-1)\int_{\mathbb{R}^N}|u_k^{(\eta)}|^{r-2}\nabla u_k\cdot\nabla u_k^{(\eta)}dx \\
 &=(r-1)\int_{\mathbb{R}^N}|u_k^{(\eta)}|^{r-2}|\nabla u_k^{(\eta)}|^2dx
\end{align*}
and
\begin{align*}
\int_{\mathbb{R}^N}|u_k^{(\eta)}|^rdx\le\int_{\mathbb{R}^N}|u_k^{(\eta)}|^{r-2}u_k^{(\eta)}u_kdx. 
\end{align*}

On the other hand we see $\nabla|u_k^{(\eta)}|^{\frac{r}{2}}=\frac{r}{2}|u_k^{(\eta)}|^{\frac{r-4}{2}}u_k^{(\eta)}\nabla u_k^{(\eta)}$ (for the case $r=2$ see \cite[Lemma 7.6 and Lemma 7.7]{Gilberg-Trudinger}). Hence we obtain
\begin{align}\label{eq:reg-04-00}
\begin{split}
  &\ \frac{4(r-1)}{r^2}\int_{\mathbb{R}^N}|\nabla|u_k^{(\eta)}|^{\frac{r}{2}}|^2dx+\kappa\int_{\mathbb{R}^N}(|u_k^{(\eta)}|^{\frac{r}{2}})^2dx\\
\le&\ \int_{\mathbb{R}^N}\nabla u_k\cdot \nabla(|u_k^{(\eta)}|^{r-2}u_k^{(\eta)})dx+\kappa\int_{\mathbb{R}^N}u_k(|u_k^{(\eta)}|^{r-2}u_k^{(\eta)})dx \\
=&\ \int_{\mathbb{R}^N}(I_{\alpha}*(\overline{H}_{1,k}u_k))\overline{K}_{1,k}|u_k^{(\eta)}|^{r-2}u_k^{(\eta)}dx+\lambda\int_{\mathbb{R}^N}v|u_k^{(\eta)}|^{r-2}u_k^{(\eta)}dx \\
 &\ \ \ \ \ \ \ \ \ +(\kappa-V_1)\int_{\mathbb{R}^N}u|u_k^{(\eta)}|^{r-2}u_k^{(\eta)}dx.
\end{split}
\end{align}
We next note that
\begin{align}\label{eq:reg-04-01}
\begin{split}
 &\int_{\mathbb{R}^N}(I_{\alpha}*(\overline{H}_{1,k}u_k))\overline{K}_{1,k}|u_k^{(\eta)}|^{r-2}u_k^{(\eta)}dx \\
=&\int_{\mathbb{R}^N}(I_{\alpha}*(\overline{H}_{1,k}u_{k}^{(\eta)}))\overline{K}_{1,k}|u_{k}^{(\eta)}|^{r-2}u_{k}^{(\eta)}dx \\
 &\ \ \ \ \ \ \ \ \ \ \ \ \ \ \ \ \ \ \ \ \ \ \ \ \ \ \ \ \ \ \ + \int_{\mathbb{R}^N}(I_{\alpha}*(\overline{H}_{1,k}(u_k-u_k^{(\eta)}))\overline{K}_{1,k}|u_k^{(\eta)}|^{r-2}u_k^{(\eta)}dx \\
=&\int_{\mathbb{R}^N}(I_{\alpha}*(\overline{H}_{1,k}u_k^{(\eta)}))\overline{K}_{1,k}|u_k^{(\eta)}|^{r-2}u_k^{(\eta)}dx \\
 &\ \ \ \ \ \ \ \ \ \ \ \ \ \ \ \ \ \ \ \ \ \ \ \ \ \ \ \ \ \ \ + \int_{\mathbb{R}^N}(I_{\alpha}*\overline{K}_{1,k}|u_k^{(\eta)}|^{r-2}u_k^{(\eta)})(\overline{H}_{1,k}(u_k-u_k^{(\eta)}))dx.
 \end{split}
\end{align}
If $r<\frac{2N}{\alpha}$, then  by Lemma \ref{lem:reg-lemma} with $\theta=\frac{2}{r}$, there exists $C_r>0$ such that
\begin{align}\label{eq:reg-04-02}
\begin{split}
&\int_{\mathbb{R}^N}(I_{\alpha}*(\overline{H}_{1,k}u_k^{(\eta)}))\overline{K}_{1,k}|u_k^{(\eta)}|^{r-2}u_k^{(\eta)}dx \\
&\ \ \ \ \le \int_{\mathbb{R}^N}(I_{\alpha}*(|\overline{H}_{1,k}||u_k^{(\eta)}|)|\overline{K}_{1,k}||u_k^{(\eta)}|^{r-1}dx \\
&\ \ \ \ = \int_{\mathbb{R}^N}\left(I_{\alpha}*\left(|\overline{H}_{1,k}|\left||u_k^{(\eta)}|^{\frac{r}{2}}\right|^{\frac{2}{r}}\right)\right)|\overline{K}_{1,k}|\left||u_k^{(\eta)}|^{\frac{r}{2}}\right|^{2-\frac{2}{r}}dx \\
&\ \ \ \ \le \frac{2(r-1)}{r^2}\int_{\mathbb{R}^N}|\nabla |u_k^{(\eta)}|^{\frac{r}{2}}|^2dx+C_r\int_{\mathbb{R}^N}||u_k^{(\eta)}|^{\frac{r}{2}}|^2dx \\
&\ \ \ \ \le \frac{2(r-1)}{r^2}\int_{\mathbb{R}^N}|\nabla |u_k^{(\eta)}|^{\frac{r}{2}}|^2dx+C_r\int_{\mathbb{R}^N}|u_k|^rdx.
\end{split}
\end{align}
We next estimate the second term of \eqref{eq:reg-04-01} by using the Hardy--Littlwood--Sobolev inequality. Set $\frac{1}{s}=\frac{\alpha}{2N}+1-\frac{1}{r}$ and $\frac{1}{t}=\frac{\alpha}{2N}+\frac{1}{r}$. Since $r\in [2,\frac{2N}{\alpha})$ we have $1<s\le\frac{2N}{N+\alpha}$ and $\frac{2N}{N+\alpha}\le t<\frac{N}{\alpha}$. Hence the Hardy--Littlewood--Sobolev inequality, the fact that $u_k-u_{k,\eta}=0$ on $\mathbb{R}^N\setminus A_{k,\eta}$ and $|u_k^{(\eta)}|\le |u_k|$, $|u_k-u_k^{(\eta)}|\le 2|u_k|$ on $\mathbb{R}^N$ imply that
\begin{align*}
&\int_{\mathbb{R}^N}(I_{\alpha}*\overline{K}_{1,k}|u_k^{(\eta)}|^{r-2}u_k^{(\eta)})(\overline{H}_{1,k}(u_k-u_k^{(\eta)}))dx. \\ 
&\ \ \ \le \int_{\mathbb{R}^N}(I_{\alpha}*|\overline{K}_{1,k}||u_k^{(\eta)}|^{r-1})|\overline{H}_{1,k}||u_k-u_k^{(\eta)}|dx \\
&\ \ \ \le 2C_{\mathrm{HLS}}\left(N,\alpha, {\textstyle\frac{2N}{r\alpha+2N}}\right)\left(\int_{\mathbb{R}^N}|\overline{K}_{1,k}|^s|u_{k}^{(\eta)}|^{s(r-1)}dx\right)^{\frac{1}{s}}\left(\int_{A_{\eta,k}}|\overline{H}_{1,k}|^t|u_k|^{t}dx\right)^{\frac{1}{t}}.
\end{align*}
Since $\frac{\alpha s}{2N}+\frac{r-1}{r}s=1$ and $\frac{\alpha}{2N}t+\frac{t}{r}=1$, by using the H\"{o}lder inequality, if $u_k\in L^r(\mathbb{R}^N)$, then $\overline{K}_{1,k}|u_k^{(\eta)}|^{r-1}\in L^s(\mathbb{R}^N)$, $|\overline{H}_{1,k}||u_k|\in L^t(\mathbb{R}^N)$. Therefore we get
\begin{align*}
 &\left(\int_{\mathbb{R}^N}|\overline{K}_{1,k}|^s|u_k^{(\eta)}|^{s(r-1)}dx\right)^{\frac{1}{s}}\left(\int_{A_{\eta,k}}|\overline{H}_{1,k}|^t|u_k|^{t}dx\right)^{\frac{1}{t}} \\
 \le&\left(\int_{\mathbb{R}^N}|\overline{K}_{1,k}|^{\frac{2N}{\alpha}}dx\right)^{\frac{\alpha}{2N}}\left(\int_{\mathbb{R}^N}|u_k^{(\eta)}|^rdx\right)^{\frac{r-1}{r}}\left(\int_{\mathbb{R}^N}|\overline{H}_{1,k}|^{\frac{2N}{\alpha}}dx\right)^{\frac{\alpha}{2N}}\left(\int_{A_{k,\eta}}|u_k|^rdx\right)^{r}.
\end{align*}
Thus, by the Lebesgue dominated convergence theorem we obtain for fixed $k\in\mathbb{R}^N$
\begin{align*}
\lim_{\eta\to\infty}\int_{A_{k,\eta}}|u_{k}|^rdx=0
\end{align*}
and then
\begin{align}\label{eq:reg-04-03}
\int_{\mathbb{R}^N}(I_{\alpha}*\overline{K}_{1,k}|u_k^{(\eta)}|^{r-2}u_k^{(\eta)})(\overline{H}_{1,k}(u_k-u_k^{(\eta)}))dx=o_{\eta}(1),
\end{align}
where $o_{\eta}(1)$ means $o_{\eta}(1)\to 0$ as $\eta\to \infty$. By the Young inequality we have
\begin{align}\label{eq:reg-04-04}
\begin{split}
\int_{\mathbb{R}^N}v|u_k^{(\eta)}|^{r-1}dx&\le\frac{1}{r}\int_{\mathbb{R}^N}|v|^rdx+\frac{r-1}{r}\int_{\mathbb{R}^N}|u_k^{(\eta)}|^{r}dx \\
&\le \frac{1}{r}\int_{\mathbb{R}^N}|v|^rdx+\frac{r-1}{r}\int_{\mathbb{R}^N}|u_{k}|^{r}dx, \\
\int_{\mathbb{R}^N}u|u_k^{(\eta)}|^{r-1}dx&\le\frac{1}{r}\int_{\mathbb{R}^N}|u|^rdx+\frac{r-1}{r}\int_{\mathbb{R}^N}|u_k^{(\eta)}|^{r}dx \\
&\le \frac{1}{r}\int_{\mathbb{R}^N}|u|^rdx+\frac{r-1}{r}\int_{\mathbb{R}^N}|u_{k}|^{r}dx.
\end{split}
\end{align}
Therefore by \eqref{eq:reg-04-00}, \eqref{eq:reg-04-01}, \eqref{eq:reg-04-02}, \eqref{eq:reg-04-03} and \eqref{eq:reg-04-04} we arrive at
\begin{align*}
&\frac{2(r-1)}{r^2}\int_{\mathbb{R}^N}|\nabla|u_k^{(\eta)}|^{\frac{r}{2}}|^2dx \\
&\ \ \le \frac{1}{r}\int_{\mathbb{R}^N}((V_1+\kappa)|u|^r+|\lambda| |v|^r)dx \\
&\ \ \ \ \ \ \ +\left(\frac{(\kappa+V_1+|\lambda|)(r-1)}{r}+C_r\right)\int_{\mathbb{R}^N}|u_k|^rdx+o_{\eta}(1).
\end{align*}
Since $|u_k^{(\eta)}|^{\frac{r}{2}}\in H^1(\mathbb{R}^N)$, by the Sobolev inequality we obtain
\begin{align}\label{eq:reg-05-01}
\begin{split}
&\frac{2(r-1)}{r^2}\mathcal {S}_N\left(\int_{\mathbb{R}^N}|u_k^{(\eta)}|^{\frac{r}{2}\frac{2N}{N-2}}dx\right)^{\frac{N-2}{N}} \\
&\ \ \ \ \ \le \frac{1}{r}\int_{\mathbb{R}^N}((V_1+\kappa)|u|^r+|\lambda| |v|^r)dx \\
&\ \ \ \ \ \ \ \ \ \ \ \ \ +\left(\frac{(\kappa+V_1+|\lambda|)(r-1)}{r}+C_r\right)\int_{\mathbb{R}^N}|u_k|^rdx+o_{\eta}(1),
\end{split}
\end{align}
where $\mathcal{S}_N$ is the Sobolev best constant defined in \eqref{eq:Sobolev-best}.  Similarly we obtain
\begin{align}\label{eq:reg-05-02}
\begin{split}
&\frac{2(r-1)}{r^2}\mathcal {S}_N\left(\int_{\mathbb{R}^N}|v_k^{(\eta)}|^{\frac{r}{2}\frac{2N}{N-2}}dx\right)^{\frac{N-2}{N}} \\
&\ \ \ \ \ \le \frac{1}{r}\int_{\mathbb{R}^N}((V_2+\kappa)|v|^r+|\lambda| |u|^r)dx \\
&\ \ \ \ \ \ \ \ \ \ \ \ +\left(\frac{(\kappa+V_2+|\lambda|)(r-1)}{r}+C_r\right)\int_{\mathbb{R}^N}|v_k|^rdx+o_{\eta}(1).
\end{split}
\end{align}
Letting $\eta\to\infty$ in \eqref{eq:reg-05-01} and \eqref{eq:reg-05-02} and using Fatou's Lemma we obtain
\begin{align}\label{eq:reg-05-03}
\begin{split}
&\frac{2(r-1)}{r^2}\mathcal{S}_N\left\{\left(\int_{\mathbb{R}^N}|u_{k}|^{\frac{r}{2}\frac{2N}{N-2}}dx\right)^{\frac{N-2}{N}}+\left(\int_{\mathbb{R}^N}|v_{k}|^{\frac{r}{2}\frac{2N}{N-2}}dx\right)^{\frac{N-2}{N}}\right\} \\
&\le\frac{1}{r}\int_{\mathbb{R}^N}((V_1+\kappa+|\lambda|)|u|^r+(V_2+\kappa+|\lambda|)|v|^r)dx \\
&\ \ \ \ +\left(\frac{(\kappa+V_1+|\lambda|)(r-1)}{r}+C_r\right)\int_{\mathbb{R}^N}|u_k|^rdx \\
&\ \ \ \ \ \ +\left(\frac{(\kappa+V_2+|\lambda|)(r-1)}{r}+C_r\right)\int_{\mathbb{R}^N}|v_k|^rdx
\end{split}
\end{align}
and 
\begin{align}\label{eq:reg-05-03-01}
\|u_{k}\|_{\frac{r}{2} \frac{2N}{N-2}}+\|v_{k}\|_{\frac{r}{2} \frac{2N}{N-2}}\le C_1(\kappa, V_1,V_2, \lambda, r, N)(\|u_k\|_r+\|v_k\|_r+\|u\|_r+\|v\|_r).
\end{align}
We also note that from \eqref{eq:reg-00-01} we have
\begin{align}\label{eq:reg-05-04}
\|u_k\|_2+\|v_k\|_2\le C_2(\kappa, V_1, V_2, \lambda)(\|u\|_2+\|v\|_2).
\end{align}

We now iterate the procedure starting from $r = 2$ to show that $u, v \in L^s(\mathbb{R}^N)$ for any $s \in \left[2, \frac{N}{\alpha} \frac{2N}{N-2} \right]$. 

Take $r=2$ in \eqref{eq:reg-05-03-01} and combine it with \eqref{eq:reg-05-04} we obtain
\begin{align*}
\|u_{k}\|_{\frac{2N}{N-2}}+\|v_{k}\|_{\frac{2N}{N-2}}\le C_3(\kappa, V_1,V_2, \lambda, N)(\|u\|_2+\|v\|_2).
\end{align*}
By Fatou's lemma we obtain
\begin{align*}
\|u\|_{\frac{2N}{N-2}}+\|v\|_{\frac{2N}{N-2}}\le C_3(\kappa, V_1,V_2, \lambda, N)(\|u\|_2+\|v\|_2)
\end{align*}
and the interpolation inequality implies that
\begin{align*}
\|u\|_s+\|v\|_s\le C_4(\kappa, V_1,V_2, \lambda, s, N)(\|u\|_2+\|v\|_2)
\end{align*}
for $s\in \left[2,\frac{2N}{N-2}\right]$.

\noindent
\textbf{Case 1: $\bm{\frac{2N}{\alpha} \le \frac{2N}{N-2}}$.} Take any $s \in \left(\frac{2N}{N-2}, \frac{N}{\alpha} \frac{2N}{N-2} \right)$. Combining \eqref{eq:reg-05-03-01} with $r = \frac{N-2}{N}s\in\left(2, \frac{2N}{\alpha}\right)$ and \eqref{eq:reg-05-04}, we obtain
\begin{align}\label{eq:reg-05-05}
\|u_k\|_s + \|v_k\|_s \le C_5(\kappa, V_1, V_2, \lambda, s, N)\left( \|u\|_2 + \|v\|_2 \right).
\end{align}
Then, by Fatou's lemma,
\begin{align*}
\|u\|_s + \|v\|_s \le C_5(\kappa, V_1, V_2, \lambda, s, N) \left( \|u\|_2 + \|v\|_2 \right).
\end{align*}
This completes the proof of Case 1. 

\ \\
\noindent
\textbf{Case 2: $\bm{\frac{2N}{N-2} < \frac{2N}{\alpha}}$.} Combining \eqref{eq:reg-05-03-01} with $r = \frac{2N}{N-2}(< \frac{2N}{\alpha})$ and \eqref{eq:reg-05-04}, we obtain
\begin{align}\label{eq:reg-05-06}
\|u_k\|_{2 \frac{N^2}{(N-2)^2}} + \|v_k\|_{2 \frac{N^2}{(N-2)^2}} \le C_6(\kappa, V_1, V_2, \lambda, N) \left( \|u\|_2 + \|v\|_2 \right).
\end{align}

If $2\frac{N^2}{(N-2)^2} \ge \frac{2N}{\alpha}$, then using \eqref{eq:reg-05-04}, \eqref{eq:reg-05-06} and the interpolation inequality, we obtain
\begin{align}\label{eq:reg-05-07}
\|u_k\|_s + \|v_k\|_s \le C_7(\kappa, V_1, V_2, \lambda, s, N)\left( \|u\|_2 + \|v\|_2 \right)
\end{align}
for all $s \in \left[2, \frac{N}{\alpha} \frac{2N}{N-2} \right] \subset \left[2, 2 \frac{N^2}{(N-2)^2} \right].$ and for some $C_2(\kappa, V_1, V_2, \lambda, s, N)>0$. 

If instead $2\frac{N^2}{(N-2)^2} < \frac{2N}{\alpha}$, then we continue the iteration with $r = 2\frac{N^2}{(N-2)^2}$. 
Since
\begin{align*}
2 \left( \frac{N}{N-2} \right)^l \to \infty \quad \text{as } l \to \infty,
\end{align*}
there exists  $l \in \mathbb{N}$ such that
\begin{align*}
2 \left( \frac{N}{N-2} \right)^l > \frac{2N}{\alpha}.
\end{align*}
Thus, we obtain
\begin{align}\label{eq:reg-05-08}
\|u_k\|_s + \|v_k\|_s \le C_8(\kappa, V_1, V_2, \lambda, s, N)\left( \|u\|_2 + \|v\|_2 \right)
\end{align}
for all
\[
s \in \left[2, \frac{N}{\alpha} \frac{2N}{N-2} \right] \subset \left[2, 2\left( \frac{N}{N-2} \right)^l \right].
\]
and some $C_3(\kappa, V_1, V_2, \lambda, s, N)>0$ . Finally, by Fatou's lemma,
\begin{align}
\|u\|_s + \|v\|_s \le C_8(\kappa, V_1, V_2, \lambda, s, N) \left( \|u\|_2 + \|v\|_2 \right)
\end{align}
for all $s \in \left[2, \frac{N}{\alpha}\frac{2N}{N-2} \right]$. This completes the proof of Case 2. 

The proof of the proposition is now complete.
\end{proof}

Let us give the regularity result for weak solutions to \eqref{eq:Choquard-system}.

\begin{Prop}\label{prop:reg-02}
Suppose that $N\ge 3$, $1+\frac{\alpha}{N}\le p,q\le \frac{N+\alpha}{N-2}$ and $(u,v)\in H^1(\mathbb{R}^N)\times H^1(\mathbb{R}^N)$ is a weak solution to \eqref{eq:Choquard-system}. Then $(u,v)\in W^{2,r}_{\mathrm{loc}}(\mathbb{R}^N)\times W^{2,r}_{\mathrm{loc}}(\mathbb{R}^N)$ for any $r\in [1,\infty)$. Moreover $(u,v)\in C^2(\mathbb{R}^N)\times C^2(\mathbb{R}^N)$. 
\end{Prop}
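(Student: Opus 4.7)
The plan is to reduce \eqref{eq:Choquard-system} to the linear setting of Proposition \ref{prop:reg}, derive improved $L^r$-integrability of $(u,v)$, and then upgrade to $W^{2,r}_{\mathrm{loc}}$ and $C^2$ via the Hardy--Littlewood--Sobolev inequality together with standard elliptic $L^p$ and Schauder theory.

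First I would rewrite the Choquard nonlinearities in the form $(I_\alpha * (Hu))K$ required by Proposition \ref{prop:reg}. Setting $H_1 := \mu^{1/2}|u|^{p-2}u$, $K_1 := \mu^{1/2}|u|^{p-2}u$, and analogously $H_2 := \nu^{1/2}|v|^{q-2}v$, $K_2 := \nu^{1/2}|v|^{q-2}v$, one has $(I_\alpha * (H_1 u))K_1 = \mu (I_\alpha * |u|^p)|u|^{p-2}u$, so $(u,v)$ solves \eqref{eq:Choquard-system-aux}. The main technical check is that $|u|^{p-1}$ and $|v|^{q-1}$ belong to $L^{2N/(\alpha+2)}(\mathbb{R}^N)+L^{2N/\alpha}(\mathbb{R}^N)$. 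Splitting $|u|^{p-1}=|u|^{p-1}\mathbf{1}_{\{|u|\le 1\}}+|u|^{p-1}\mathbf{1}_{\{|u|>1\}}$ and using the hypothesis $1+\alpha/N\le p\le (N+\alpha)/(N-2)$ together with $H^1(\mathbb{R}^N)\hookrightarrow L^s(\mathbb{R}^N)$ for every $s\in[2,2^*]$, the small-value piece is shown to lie in $L^{2N/\alpha}(\mathbb{R}^N)$ and the large-value piece in $L^{2N/(\alpha+2)}(\mathbb{R}^N)$; the analogous estimate works for $|v|^{q-1}$. Proposition \ref{prop:reg} then yields $u,v\in L^r(\mathbb{R}^N)$ for every $r\in\bigl[2,\tfrac{N}{\alpha}\tfrac{2N}{N-2}\bigr)$.

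I would next bootstrap to $W^{2,r}_{\mathrm{loc}}$-regularity for every $r\in[1,\infty)$. The improved integrability makes $|u|^p$ and $|v|^q$ lie in $L^{s_1}(\mathbb{R}^N)\cap L^{s_2}(\mathbb{R}^N)$ for some $s_1<N/\alpha<s_2$, so decomposing $I_\alpha$ into its singular part near the origin (which lies in $L^1_{\mathrm{loc}}$) and its bounded far-field part and applying Young's convolution inequality shows that $I_\alpha * |u|^p$ and $I_\alpha * |v|^q$ are locally bounded on $\mathbb{R}^N$. Combined with the fact that $|u|^{p-2}u$ and $|v|^{q-2}v$ lie in $L^s_{\mathrm{loc}}(\mathbb{R}^N)$ for every $s\in[1,\infty)$, this implies that the right-hand sides of both equations in \eqref{eq:Choquard-system} belong to $L^s_{\mathrm{loc}}(\mathbb{R}^N)$ for every $s\in[1,\infty)$, so the Calder\'on--Zygmund $L^p$-theory applied to each scalar equation $-d_i\Delta +V_i$ gives $u,v\in W^{2,r}_{\mathrm{loc}}(\mathbb{R}^N)$ for every $r\in[1,\infty)$.

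Finally, taking $r>N$ in the Sobolev embedding $W^{2,r}_{\mathrm{loc}}\hookrightarrow C^{1,\beta}_{\mathrm{loc}}$ shows that $u,v$ are locally H\"older continuous, hence so are $|u|^p$ and $|v|^q$. A classical estimate for the Riesz potential then implies that $I_\alpha * |u|^p$ and $I_\alpha * |v|^q$ are locally H\"older continuous as well, so the right-hand sides of \eqref{eq:Choquard-system} lie in $C^{0,\beta}_{\mathrm{loc}}(\mathbb{R}^N)$, and Schauder estimates upgrade $u,v$ to $C^{2,\beta}_{\mathrm{loc}}(\mathbb{R}^N)\subset C^2(\mathbb{R}^N)$. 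I expect the main obstacle to be the careful verification in the first step at the two endpoints $p=1+\alpha/N$ and $p=(N+\alpha)/(N-2)$ (and similarly for $q$), where the decomposition of $|u|^{p-1}$ into $L^{2N/(\alpha+2)}+L^{2N/\alpha}$ is borderline and uses the full range of the Sobolev embedding $H^1\hookrightarrow L^s$; once Proposition \ref{prop:reg} is triggered, the remaining bootstrap is a routine application of classical elliptic theory.
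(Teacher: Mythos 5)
Your overall strategy coincides with the paper's proof: the same choice $H_1=K_1=\sqrt{\mu}\,|u|^{p-2}u$, $H_2=K_2=\sqrt{\nu}\,|v|^{q-2}v$, the same splitting of $|u|^{p-1}$ at the level set $\{|u|=1\}$ to land in $L^{2N/\alpha}+L^{2N/(\alpha+2)}$, the application of Proposition \ref{prop:reg}, the observation that $|u|^p,|v|^q$ lie in $L^{s_1}\cap L^{s_2}$ with $s_1<\frac{N}{\alpha}<s_2$ so that $I_\alpha*|u|^p,\,I_\alpha*|v|^q\in L^\infty$, and finally the H\"older-plus-Schauder upgrade to $C^2$.

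There is, however, one step that is wrong as written. After Proposition \ref{prop:reg} you only know $u,v\in L^r(\mathbb{R}^N)$ for $r\in\bigl[2,\tfrac{N}{\alpha}\tfrac{2N}{N-2}\bigr)$, so the claim that $|u|^{p-2}u$ and $|v|^{q-2}v$ belong to $L^s_{\mathrm{loc}}(\mathbb{R}^N)$ for \emph{every} $s\in[1,\infty)$ is false when $p$ (or $q$) is at or near the upper critical exponent: for $p=\tfrac{N+\alpha}{N-2}$ one has $|u|^{p-1}\le 1+|u|^{\frac{\alpha+2}{N-2}}$, and the available integrability only gives $|u|^{p-1}\in L^s_{\mathrm{loc}}$ for $s<\tfrac{2N^2}{\alpha(\alpha+2)}$, a finite threshold (e.g.\ $s<2.25$ for $N=3$, $\alpha=2$). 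Consequently a single application of Calder\'on--Zygmund theory does not yield $W^{2,r}_{\mathrm{loc}}$ for all $r$; you genuinely need the iteration that your opening word ``bootstrap'' promises but your argument then skips. The repair is exactly what the paper does: since $I_\alpha*|u|^p$, $I_\alpha*|v|^q\in L^\infty$, the system satisfies $|-d_1\Delta u+V_1u|\le C\bigl(|u|^{\alpha/N}+|u|^{\frac{\alpha+2}{N-2}}+|v|\bigr)$ (and symmetrically for $v$), where the power $\tfrac{\alpha+2}{N-2}$ is strictly subcritical because $\alpha<N$; iterating elliptic $L^p$ estimates and Sobolev embeddings on balls, simultaneously for both components so the coupling terms are absorbed, raises the local integrability in finitely many steps to $L^\infty_{\mathrm{loc}}$, after which one more application gives $W^{2,r}_{\mathrm{loc}}$ for all $r$. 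A second, minor imprecision: in the Schauder step the relevant factor is $|u|^{p-2}u$, which for $p<2$ is only locally H\"older of exponent $\min\{1,p-1\}$, so the H\"older exponent of the right-hand side must be taken accordingly smaller (the paper chooses $\gamma<\min\{1,p-1\}(1-N/r)$); this does not affect the conclusion, since Schauder theory only requires some positive exponent.
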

\begin{proof}
We first show $u,v\in W^{2,r}_{\mathrm{loc}}(\mathbb{R}^N)$. 
Setting $H_1(s)=K_1(s)=\sqrt{\mu}|s|^{p-2}s$, $H_2(s)=K_2(s)=\sqrt{\nu}|s|^{q-2}s$, \eqref{eq:Choquard-system-aux} can be written as
\begin{align*}
&-d_1\Delta u+V_1u=(I_{\alpha}*(H_1u))K_1+\lambda v\ \ \mbox{in}\ \ \mathbb{R}^N, \\
&-d_2\Delta v+V_2v=(I_{\alpha}*(H_2v))K_2+\lambda u\ \ \mbox{in}\ \ \mathbb{R}^N. 
\end{align*}
Since $1+\frac{\alpha}{N}\le p\le \frac{N+\alpha}{N-2}$ we see that
\begin{align}\label{eq:reg-07-01}
|u(x)|^p\le |u(x)|^{1+\frac{\alpha}{N}}+|u(x)|^{\frac{N+\alpha}{N-2}}, \end{align}
and
\begin{align}\label{eq:reg-07-02}
\begin{split}
&|u(x)|^{p-1}=|u(x)|^{p-1}\chi_{\{y\in\mathbb{R}^N:|u(y)|\le 1\}}(x)+|u(x)|^{p-1}\chi_{\{y\in\mathbb{R}^N:|u(x)|\ge 1\}}(x) \\
&||u|^{p-1}\chi_{\{y\in\mathbb{R}^N:|u(y)|\le 1\}}|\le |u|^{\frac{\alpha}{N}}\in L^{\frac{2N}{\alpha}}(\mathbb{R}^N),\\ 
&||u|^{p-1}\chi_{\{y\in\mathbb{R}^N:|u(y)|\ge 1\}}|\le |u|^{\frac{\alpha+2}{N-2}}\in L^{\frac{2N}{\alpha+2}}(\mathbb{R}^N).
\end{split}
\end{align}
Hence $H_1$, $K_1\in L^{\frac{2N}{\alpha}}(\mathbb{R}^N)+L^{\frac{2N}{\alpha+2}}(\mathbb{R}^N)$. Similarly we see $H_2$, $K_2\in L^{\frac{2N}{\alpha}}(\mathbb{R}^N)+L^{\frac{2N}{\alpha+2}}(\mathbb{R}^N)$. By Proposition \ref{prop:reg}, $u$, $v\in L^r(\mathbb{R}^N)$ for $r\in [2, \frac{N}{\alpha} \frac{2N}{N-2})$. By \eqref{eq:reg-07-01} and \eqref{eq:reg-07-02}, $|u|^{p}, |v|^q\in L^s(\mathbb{R}^N)$ for any $s\in [\frac{2N}{N+\alpha}, \frac{N}{\alpha} \frac{2N}{N+\alpha})$. Since $\frac{2N}{N+\alpha}<\frac{N}{\alpha}<\frac{N}{\alpha} \frac{2N}{N+\alpha}$ we have $I_{\alpha}*|u|^p, I_{\alpha}*|v|^q\in L^{\infty}(\mathbb{R}^N)$. 
Therefore we obtain
\begin{align*}
|-d_1\Delta u+V_1u|\le C(|u|^{\frac{\alpha}{N}}+|u|^{\frac{2+\alpha}{N-2}}+|v|),\ \ |-d_2\Delta v+V_2v|\le C(|v|^{\frac{\alpha}{N}}+|v|^{\frac{2+\alpha}{N-2}}+|u|).
\end{align*}
By the classical bootstrap argument for subcritical local problem on bounded domains we deduce that $u$, $v\in W^{2,r}_{\mathrm{loc}}(\mathbb{R}^N)$ for any $r\ge 1$. 

We next prove that $u,v \in C^2(\mathbb{R}^N)$.  
Although the result can be proved in the same manner as in the proof of Claim~3 in \cite[Proposition 4.1]{Moroz-Schaftingen}, we include the proof here for completeness.

By the Sobolev embedding theorem (see, for example, \cite[Corollary 9.13]{Brezis}), we have $u \in C^{1,1 - \frac{N}{r}}_{\mathrm{loc}}(\mathbb{R}^N)$ for any $r > N$. We shall show that if $0 < \gamma < \min\{1, p - 1\}$, then $u, v\in C^{2,\gamma}_{\mathrm{loc}}(\mathbb{R}^N)$.

Take $r > 1$ sufficiently large such that
\begin{align*}
\gamma < \min\{1, p - 1\}\left(1 - \frac{N}{r}\right) =: \theta.
\end{align*}
Let $R > 0$ and define $B_R = B_R(0)$, $B_{R+1} = B_{R+1}(0)$, and $B_{R+3} = B_{R+3}(0)$. Choose $\eta \in C_0^\infty(\mathbb{R}^N)$ such that $\eta \equiv 1$ on $B_1(0)$, $\eta \equiv 0$ on $\mathbb{R}^N \setminus B_2(0)$, and $0 \le \eta \le 1$ on $\mathbb{R}^N$. Write
\begin{align*}
I_\alpha = (1 - \eta) I_\alpha + \eta I_\alpha.
\end{align*}
Since $u \in L^s(\mathbb{R}^N)$ for any $s \ge 1$, and $\nabla \big[(1 - \eta) I_\alpha\big]$ is bounded on $\mathbb{R}^N$, it follows that
\begin{align*}
((1 - \eta)I_{\alpha}) * |u|^p \in C^1(\mathbb{R}^N),
\end{align*}
and indeed, as shown in~\cite{Moroz-Schaftingen}, we have $((1 - \eta)I_{\alpha}) * |u|^p \in C^\infty(\mathbb{R}^N)$.

Since the function $t \mapsto |t|^p$ is locally Lipschitz continuous, it follows that $|u|^p \in C^{0,\gamma}(B_{R+3})$. Moreover, since the function $t \mapsto |t|^{p-2}t$ belongs to $C^{0,\min\{1, p - 1\}}_{\mathrm{loc}}(\mathbb{R})$, we deduce that $|u|^{p-2}u \in C^{0,\theta}(B_{R+3})$, and hence $|u|^{p-2}u \in C^{0,\gamma}(B_{R+3})$.

Next, since $\eta I_\alpha \in L^1(\mathbb{R}^N)$ and $|u|^p \in C^{0,\gamma}(B_{R+3})$, we obtain
\begin{align*}
|(\eta I_\alpha * |u|^p)(x) - (\eta I_\alpha * |u|^p)(z)|
&= A_\alpha \left| \int_{B_2(0)} \frac{\eta(y)}{|y|^{N - \alpha}} \left( |u(x - y)|^p - |u(z - y)|^p \right) \, dy \right| \\
&\le A_\alpha \|u\|_{C^{0,\gamma}(B_{R+3})} |x - z|^\gamma \int_{B_2(0)} \frac{1}{|y|^{N - \alpha}} \, dy\ \ \ \text{for}\ \ \ x,z\in B_{R+1}.
\end{align*}
Therefore, $(\eta I_\alpha) * |u|^p \in C^{0,\gamma}(B_{R+1})$, and so
\begin{align*}
(I_\alpha * |u|^p) |u|^{p - 2}u+\lambda v \in C^{0,\gamma}(B_{R+1}).
\end{align*}

Finally, by the Schauder estimate (see~\cite[Theorem 4.6]{Gilberg-Trudinger}), we conclude that $u \in C^{2,\gamma}(B_R)$. Similarly we can obtain $v\in C^{2,\gamma}_{\mathrm{loc}}(\mathbb{R}^N)$.   
The proof is complete.

\end{proof}

\subsection{Proof of Lemma \ref{Lem:Pohozaev}}
Now we are in a position to prove the Pohozaev identity. 
\begin{proof}[Proof of Lemma \ref{Lem:Pohozaev}]
Although we follow the arguments in~\cite{Moroz-Schaftingen, Moroz-Schaftingen-3}, we provide the proof here for the reader’s convenience.

Let $(u,v)\in H$ be a weak solution to \eqref{eq:NKC}, and set $d_1=a_1+b_1\int_{\mathbb{R}^3}|\nabla u|^2dy$ and $d_2=a_2+b_2\int_{\mathbb{R}^3}|\nabla v|^2dy$. Then $(u,v)\in H$ is a weak solution to
\begin{align*}
\begin{cases}
-d_1\Delta u+V_1u=\mu(I_{\alpha}*|u|^p)|u|^p+\lambda v\ \ \mbox{for}\ \ x\in\mathbb{R}^3, \\
-d_2\Delta v+V_2v=\nu(I_{\alpha}*|v|^q)|v|^q+\lambda u\ \ \mbox{for}\ \ x\in\mathbb{R}^3.
\end{cases}
\end{align*}
By Proposition \ref{prop:reg-02}, we have $u, v \in H^2_{\mathrm{loc}}(\mathbb{R}^3)$.  
Let $\varphi \in C^{\infty}_0(\mathbb{R}^3)$ be such that $\varphi \equiv 1$ on $B_1(0)$.  
For $\rho > 0$, set
\begin{align*}
u_{\rho}(x) := \varphi(\rho x)\, x \cdot \nabla u(x), \quad
v_{\rho}(x) := \varphi(\rho x)\, x \cdot \nabla v(x).
\end{align*}
Since $u, v \in H^2_{\mathrm{loc}}(\mathbb{R}^3)$ and $\varphi \in C^{\infty}_0(\mathbb{R}^3)$, it follows that $(u_\rho, v_\rho) \in H$.  
From the fact that $\langle I'(u,v), (u_\rho, v_\rho) \rangle = 0$, we obtain
\begin{align}\label{eq:poho-proof}
\begin{split}
0 =\ 
    & a_1 \int_{\mathbb{R}^3} \nabla u \cdot \nabla u_\rho \, dx
    + a_2 \int_{\mathbb{R}^3} \nabla v \cdot \nabla v_\rho \, dx
    + \int_{\mathbb{R}^3} V_1 u u_\rho \, dx
    + \int_{\mathbb{R}^3} V_2 v v_\rho \, dx \\
    & + b_1 \left( \int_{\mathbb{R}^3} |\nabla u|^2 \, dx \right) \int_{\mathbb{R}^3} \nabla u \cdot \nabla u_\rho \, dx
    + b_2 \left( \int_{\mathbb{R}^3} |\nabla v|^2 \, dx \right) \int_{\mathbb{R}^3} \nabla v \cdot \nabla v_\rho \, dx \\
    & - \mu \int_{\mathbb{R}^3} (I_\alpha * |u|^p)\, |u|^{p-2} u\, u_\rho \, dx
    - \nu \int_{\mathbb{R}^3} (I_\alpha * |v|^q)\, |v|^{q-2} v\, v_\rho \, dx \\
    & - \lambda \int_{\mathbb{R}^3} u v_\rho \, dx
    - \lambda \int_{\mathbb{R}^3} v u_\rho \, dx.
\end{split}
\end{align}

Since $u \in H^2_{\mathrm{loc}}(\mathbb{R}^3)$ and $\varphi \in C_0^\infty(\mathbb{R}^3)$, we may apply integration by parts on bounded domains with smooth boundary. Then we obtain
\begin{align*}
\int_{\mathbb{R}^3} \nabla u \cdot \nabla u_\rho \, dx
&= \rho \int_{\mathbb{R}^3} \nabla u \cdot \nabla \varphi(\rho x)\, (x \cdot \nabla u)\, dx
  + \int_{\mathbb{R}^3} \varphi(\rho x)\left[|\nabla u|^2 + x \cdot \nabla \left( \frac{|\nabla u|^2}{2} \right) \right] dx \\
&= \rho \int_{\mathbb{R}^3} \nabla u \cdot \nabla \varphi(\rho x)\, (x \cdot \nabla u)\, dx
  + \int_{\mathbb{R}^3} \varphi(\rho x)\, |\nabla u|^2\, dx \\
&\quad - \int_{\mathbb{R}^3} \rho x \cdot \nabla \varphi(\rho x)\, \frac{|\nabla u|^2}{2}\, dx
  - \frac{3}{2} \int_{\mathbb{R}^3} \varphi(\rho x)\, |\nabla u|^2\, dx \\
&= -\frac{1}{2} \int_{\mathbb{R}^3} \varphi(\rho x)\, |\nabla u|^2\, dx
  + \int_{\mathbb{R}^3} \left[
      \rho (\nabla u \cdot \nabla \varphi(\rho x))(x \cdot \nabla u)
      - \rho x \cdot \nabla \varphi(\rho x)\, \frac{|\nabla u|^2}{2}
    \right] dx.
\end{align*}

Since $\varphi \equiv 1$ on $B_1(0)$, $\varphi \in C_0^\infty(\mathbb{R}^3)$, and $|\nabla u| \in L^2(\mathbb{R}^3)$, the dominated convergence theorem implies
\begin{align*}
\lim_{\rho \to +0} \int_{\mathbb{R}^3} \nabla u \cdot \nabla u_\rho \, dx = -\frac{1}{2} \int_{\mathbb{R}^3} |\nabla u|^2 \, dx.
\end{align*}
Similarly, we have
\begin{align*}
\lim_{\rho \to +0} \int_{\mathbb{R}^3} \nabla v \cdot \nabla v_\rho \, dx = -\frac{1}{2} \int_{\mathbb{R}^3} |\nabla v|^2 \, dx.
\end{align*}

Next, by integration by parts we get
\begin{align*}
\int_{\mathbb{R}^3} u u_\rho \, dx
&= \int_{\mathbb{R}^3} u\, \varphi(\rho x)\, (x \cdot \nabla u)\, dx \\
&= -\frac{3}{2} \int_{\mathbb{R}^3} \varphi(\rho x)\, |u|^2\, dx
   - \int_{\mathbb{R}^3} \rho (x \cdot \nabla \varphi(\rho x))\, |u|^2\, dx.
\end{align*}
Hence, again by the dominated convergence theorem
\begin{align*}
\lim_{\rho \to +0} \int_{\mathbb{R}^3} u u_\rho \, dx = -\frac{3}{2} \int_{\mathbb{R}^3} |u|^2 \, dx,
\quad
\lim_{\rho \to +0} \int_{\mathbb{R}^3} v v_\rho \, dx = -\frac{3}{2} \int_{\mathbb{R}^3} |v|^2 \, dx.
\end{align*}

Similarly, by integration by parts
\begin{align*}
\int_{\mathbb{R}^3} u v_\rho \, dx + \int_{\mathbb{R}^3} v u_\rho \, dx
= -3 \int_{\mathbb{R}^3} \varphi(\rho x)\, u v \, dx
  - 2\int_{\mathbb{R}^3} \rho u v (x \cdot \nabla \varphi(\rho x)) \, dx.
\end{align*}
Letting $\rho \to +0$, we conclude
\begin{align*}
\lim_{\rho \to +0} \left( \int_{\mathbb{R}^3} u v_\rho \, dx + \int_{\mathbb{R}^3} v u_\rho \, dx \right)
= -3 \int_{\mathbb{R}^3} u v \, dx.
\end{align*}

Finally we consider the nonlocal term. Since $I_{\alpha}(x-y)=I_{\alpha}(y-x)$, we obtain
\begin{align*}
 &\ \int_{\mathbb{R}^3}(I_{\alpha}*|u|^p)|u|^{p-2}uu_{\rho}dx \\
=&\ \int_{\mathbb{R}^3}\int_{\mathbb{R}^3}|u(y)|^pI_{\alpha}(x-y)\varphi(\rho x)x\cdot\nabla\left(\frac{|u|^p}{p}\right)(x)dxdy \\
=&\ \frac{1}{2}\int_{\mathbb{R}^3}\int_{\mathbb{R}^3}I_{\alpha}(x-y)\left\{|u(y)|^p\varphi(\rho x)x\cdot\nabla\left(\frac{|u|^p}{p}\right)(x)+|u(x)|^p\varphi(\rho y)y\cdot\nabla\left(\frac{|y|^p}{p}\right)(y)\right\}dxdy
\end{align*}
By the Fubini theorem and integration by parts we obtain
\begin{align}\label{eq:poho-nonlocal}
 &\ \int_{\mathbb{R}^3}(I_{\alpha}*|u|^p)|u|^{p-2}uu_{\rho}dx \\
 =&\ -\int_{\mathbb{R}^3}\int_{\mathbb{R}^3}|u(y)|^pI_{\alpha}(x-y)(3\varphi(\rho x)+x\cdot\nabla\varphi(\rho x))\frac{|u(x)|^p}{p}dx \\ 
  &\ \ +\frac{3-\alpha}{2p}\int_{\mathbb{R}^3}\int_{\mathbb{R}^3}|u(y)|^pI_{\alpha}(x-y)\frac{(x-y)\cdot (x\varphi(\rho x)-y\varphi(\rho y))}{|x-y|^2}|u(x)|^pdxdy
\end{align}
Now we estimate 
\begin{align*}
\frac{(x-y)\cdot (x\varphi(\rho x)-y\varphi(\rho y))}{|x-y|^2}.
\end{align*}
We may assume that $\mathrm{supp}\varphi\subset B_R(0)$ for $R>0$. If $|y|\le R/\rho$, then
\begin{align*}
 \left|\frac{(x-y)\cdot (x\varphi(\rho x)-y\varphi(\rho y))}{|x-y|^2}\right| 
&=\left|\frac{(x-y)\cdot\{(x-y)\varphi(\rho x)+y(\varphi(\rho x)-\varphi(\rho y))\}}{|x-y|^2}\right| \\
&\le |\varphi(\rho x)|+\rho |y|\|\nabla\varphi\|_{\infty}\le\|\varphi\|_{\infty}+R\|\nabla\varphi\|_{\infty}.
\end{align*}
Similarly when $|x|\le R/\rho$, the same estimate will be obtained. If $|x|>R/\rho$ and $|y|>R/\rho$, then $x\varphi(\rho x)-y\varphi(\rho y)=0$. Therefore
\begin{align*}
\left|\frac{(x-y)\cdot (x\varphi(\rho x)-y\varphi(\rho y))}{|x-y|^2}\right|\le \|\varphi\|_{\infty}+R\|\nabla\varphi\|_{\infty}\ \ \mbox{for}\ \ (x,y)\in\mathbb{R}^3\times\mathbb{R}^3.
\end{align*}
Therefore we can use the Lebesgue dominated convergence theorem in \eqref{eq:poho-nonlocal} to obtain
\begin{align*}
\lim_{\rho\to +0}\int_{\mathbb{R}^3}(I_{\alpha}*|u|^p)|u|^{p-2}uu_{\rho}dx=-\frac{3+\alpha}{2p}\int_{\mathbb{R}^3}(I_{\alpha}*|u|^p)|u|^{p}dx.
\end{align*}
Letting $\rho\to +0$ in \eqref{eq:poho-proof} we obtain \eqref{eq:Pohozaev-id}. 
\end{proof}

\section{The Nehari--Pohozaev manifold}
    As we state in Section 1 we obtain the ground state solution as a minimizer of the minimization problem over the Nehari--Pohozaev manifold. In this section we prepare the Nehari--Pohozaev manifold and its properties. 
    \begin{Lem}     \label{Lem:unbounded}
      Assume that \ref{assumption:potential} holds and let $(3+\alpha)/3\le p\le q\le 3+\alpha$, then $I$ is not bounded from below.
    \end{Lem}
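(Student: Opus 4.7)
The plan is to exhibit a single one-parameter family $(u^t,v^t)\in H$ along which $I\to -\infty$, taking advantage of the Pohozaev-type dilation $w^t(x)=tw(t^{-2}x)$ that the paper has already introduced. Since only one unbounded direction is required, I would simplify as much as possible by choosing $v\equiv 0$: fix any $u\in H^1(\mathbb R^3)\setminus\{0\}$ and test the energy on $(u^t,0)$ for large $t>0$. This immediately kills the coupling term $\lambda\int uv$ and the entire $v$-side of the functional, so only the Kirchhoff--Schrödinger quadratic/quartic contributions in $u$ and the first Choquard term need be monitored. (Alternatively one could use $(0,v^t)$ by symmetry, but once the inequality $p\ge (3+\alpha)/3$ is in hand, the $u$-side already suffices and we need not invoke the condition $q\le 3+\alpha$ quantitatively.)

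The first technical step is a routine scaling computation. Setting $y=t^{-2}x$ gives
\[
\|\nabla u^t\|_2^2 = t^4\|\nabla u\|_2^2,\qquad \|u^t\|_2^2 = t^8\|u\|_2^2,
\]
while the change of variables $x=t^2\xi$, $y=t^2\eta$ combined with the explicit form $I_{\alpha}(z)=A_{\alpha}|z|^{\alpha-3}$ yields
\[
\int_{\mathbb R^3}(I_{\alpha}*|u^t|^p)|u^t|^p\,dx = t^{2p+6+2\alpha}\int_{\mathbb R^3}(I_{\alpha}*|u|^p)|u|^p\,dx.
\]
Substituting these into \eqref{eq:energy} with $v\equiv 0$ produces a polynomial in $t$ of the form
\[
I(u^t,0)=\frac{a_1\|\nabla u\|_2^2}{2}\,t^4+\left(\frac{V_1\|u\|_2^2}{2}+\frac{b_1\|\nabla u\|_2^4}{4}\right)t^8-\frac{\mu}{2p}\left(\int_{\mathbb R^3}(I_{\alpha}*|u|^p)|u|^p\,dx\right)t^{2p+6+2\alpha}.
\]

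The last step is the exponent comparison. The hypothesis $p\ge (3+\alpha)/3$ gives
\[
2p+6+2\alpha \;\ge\; \frac{2(3+\alpha)}{3}+6+2\alpha \;=\; 8+\frac{8\alpha}{3}\;>\;8,
\]
so the $t^{2p+6+2\alpha}$ term strictly dominates the positive $t^4$ and $t^8$ contributions as $t\to +\infty$. Because $\mu>0$ and $\int(I_{\alpha}*|u|^p)|u|^p\,dx>0$ for $u\ne 0$ (the Riesz kernel being strictly positive), we conclude $I(u^t,0)\to -\infty$, which proves that $I$ is unbounded from below. The only conceptual point deserving care is the lower endpoint $p=(3+\alpha)/3$, where the inequality $2p+6+2\alpha>8$ holds only because $\alpha>0$; away from this borderline case the scaling gap is even larger, and the upper bound $q\le 3+\alpha$ is not actually needed for this particular lemma.
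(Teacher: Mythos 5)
Your proposal is correct and follows essentially the same route as the paper: the same dilation $w^t(x)=tw(t^{-2}x)$, the same scaling exponents $t^4$, $t^8$, $t^{2(p+3+\alpha)}$, and the same observation that $2(p+3+\alpha)>8$ forces $I\to-\infty$. The only difference is your choice $v\equiv 0$, which harmlessly removes the coupling and $q$-terms, whereas the paper works with a general $(u,v)\neq(0,0)$ and notes that both Choquard exponents exceed $8$.
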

    
\begin{proof}
Let $(u,v)\in H\setminus\{(0,0)\}$. By a direct calculation we have
\begin{align*}
I(u^t, v^t)&=\frac{1}{2}t^4(a_1\|\nabla u\|_2^2+a_2\|\nabla v\|_2^2)+\frac{1}{2}t^8(V_1\|u\|_2^2+V_2\|v\|_2^2)+\frac{1}{4}t^8(b_1\|\nabla u\|_2^4+b_2\|\nabla v\|_2^4) \\
   &\quad -\frac{\mu}{2p}
   t^{2(p+3+\alpha)}\int_{\mathbb{R}^3}(I_{\alpha}*|u|^p)|u|^pdx-\frac{\nu}{2q}t^{2(q+3+\alpha)}\int_{\mathbb{R}^3}(I_{\alpha}*|v|^q)|v|^qdx-\lambda t^8\int_{\mathbb{R}^3}uvdx.
\end{align*}
Since $2(p+\alpha+3), 2(q+\alpha+3)>8$, it is easily seen that $I(u^t, v^t)\to -\infty$ as $t\to\infty$. 
\end{proof}

To prove the Nehari--Pohozaev manifold, which will be defined below, is nonempty we need the following lemma.

\begin{Lem}\label{Lem:mpstructure}
Let $C_1$, $C_2$, $C_3$, $C_4$ are positive constants, and $p$, $q$ are numbers satisfying $3+\alpha\le p\le q\le 3+\alpha$.  If $f(t)=C_1t^4+C_2t^8-C_3t^{2(p+3+\alpha)}-C_4t^{2(q+3+\alpha)}$, then $f$ has a unique critical point which corresponds to its maximum over $(0,\infty)$. 
\end{Lem}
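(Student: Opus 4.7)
The plan is to factor a power of $t$ out of $f'$, then substitute $s=t^4$ to convert the zero-finding problem for $f'$ on $(0,\infty)$ into that for a strictly concave function on $(0,\infty)$, so that uniqueness of the critical point follows from an elementary sign analysis. First I would observe that $f(0)=0$, that $f(t)>0$ for small $t>0$ because the $C_1 t^4$ term dominates there, and that $f(t)\to -\infty$ as $t\to\infty$ since each of the exponents $2(p+3+\alpha)$ and $2(q+3+\alpha)$ strictly exceeds $8$ under the hypothesis $(3+\alpha)/3\le p\le q$. This already guarantees that $f$ attains a positive global maximum on $(0,\infty)$, so the real content of the lemma is the uniqueness of the critical point.

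Next I would write $f'(t)=t^3 g(t)$ with
\[
g(t) = 4C_1 + 8C_2 t^4 - A_1 t^{m_1} - A_2 t^{m_2}, \qquad m_1 = 2(p+\alpha+1),\quad m_2 = 2(q+\alpha+1),
\]
and suitable $A_1,A_2>0$, so that the critical points of $f$ in $(0,\infty)$ are exactly the zeros of $g$ there. Setting $s=t^4$, a strictly increasing bijection of $(0,\infty)$ onto itself, converts $g$ into $\tilde g(s)=4C_1+8C_2 s - A_1 s^{m_1/4} - A_2 s^{m_2/4}$. Because $m_i/4>1$ for $i=1,2$, the second derivative
\[
\tilde g''(s) = -A_1\,\tfrac{m_1}{4}\bigl(\tfrac{m_1}{4}-1\bigr)s^{m_1/4-2} - A_2\,\tfrac{m_2}{4}\bigl(\tfrac{m_2}{4}-1\bigr)s^{m_2/4-2}
\]
is strictly negative on $(0,\infty)$, so $\tilde g$ is strictly concave there. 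Combined with $\tilde g(0^+)=4C_1>0$ and $\tilde g(s)\to-\infty$, strict concavity forces $\tilde g$ to possess exactly one zero $s_\ast>0$, with $\tilde g>0$ on $(0,s_\ast)$ and $\tilde g<0$ on $(s_\ast,\infty)$.

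Pulling back via $t_\ast=s_\ast^{1/4}$ and recalling $f'(t)=t^3 g(t)$, I would conclude that $f'>0$ on $(0,t_\ast)$ and $f'<0$ on $(t_\ast,\infty)$, so $t_\ast$ is the unique critical point of $f$ on $(0,\infty)$ and it coincides with the global maximum. The delicate point is the strict concavity of $\tilde g$, which requires $m_i/4>1$, i.e.\ $p+\alpha+1>2$ (and similarly for $q$); this is where the lower bound $p\ge(3+\alpha)/3$ is used, since together with $\alpha>0$ it yields $p+\alpha+1\ge(6+4\alpha)/3>2$. No other subtlety arises, and the argument is uniform across the admissible range of exponents, including the lower-critical endpoint.
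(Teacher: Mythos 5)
Your argument is correct, and it is genuinely different from (and more self-contained than) what the paper does: the paper's ``proof'' of this lemma is a one-line deferral to Lemma~2.3 of Ueno's paper, asserting that the claim follows ``by investigating $f'$, $f''$, $\cdots$, $f^{(8)}$,'' i.e.\ by a repeated-differentiation sign analysis carried out in the cited reference. You instead factor $f'(t)=t^{3}g(t)$, substitute $s=t^{4}$, and reduce everything to the strict concavity of $\tilde g(s)=4C_1+8C_2s-A_1s^{m_1/4}-A_2s^{m_2/4}$, which together with $\tilde g(0^{+})=4C_1>0$ and $\tilde g(s)\to-\infty$ forces a single sign change of $f'$ and hence a unique critical point that is the global maximum. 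The computations check out: $m_i=2(p+\alpha+1)$, $2(q+\alpha+1)$ are the correct exponents after factoring out $t^{3}$, and the concavity condition $m_i/4>1$ amounts to $p+\alpha+1>2$, which indeed holds throughout the intended range $\frac{3+\alpha}{3}\le p\le q\le 3+\alpha$ (the statement's hypothesis ``$3+\alpha\le p$'' is a typo for $\frac{3+\alpha}{3}\le p$, and you read it as intended), including both critical endpoints. What your route buys is a short, elementary, fully self-contained proof that works uniformly in $p,q,\alpha$ and avoids any appeal to the external reference or to derivatives beyond the second; what the paper's route buys is merely brevity of exposition by outsourcing the computation. One tiny remark: your preliminary observations that $f>0$ near $0$ and $f\to-\infty$ at infinity are not strictly needed, since the sign analysis of $f'$ (positive before $t_{*}$, negative after) already yields that the unique critical point is the maximum.
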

\begin{proof}
We can prove this lemma in the same manner as the proof of \cite[Lemma 2.3]{Tatsuya} by investigating $f'$, $f''$, $\cdots$, $f^{(8)}$. 
\end{proof}
   Let $(u, v) \in H \setminus \{(0, 0)\}$ be a critical point of $I$. For $t > 0$, set
    \begin{align}
        \begin{split}
            \zeta(t) &\coloneqq I(u^t, v^t)      \label{eq:zeta} \\
            &= \frac{1}{2}t^4(a_1\|\nabla u\|_2^2 + a_2\|\nabla v\|_2^2) + \frac{1}{2}t^8\left(V_1\|u\|_2^2 + V_2\|v\|_2^2\right) + \frac{1}{4}t^8(b_1\|\nabla u\|_2^4 + b_2\|\nabla v\|_2^2) \\
            & - \frac{\mu}{2p}t^{2(p +\alpha+ 3)}\int_{\mathbb{R}^3}(I_{\alpha}*|u|^p)|u|^pdx - \frac{\nu}{2q}t^{2(q +\alpha+ 3)}\int_{\mathbb{R}^3}(I_{\alpha}*|v|^q)|v|^qdx - \lambda t^8 \int_{\mathbb{R}^3} uv\,dx.
        \end{split}
    \end{align}
    By Lemma \ref{Lem:mpstructure}, $\zeta$ has a unique critical point $t_1 > 0$ corresponding to its maximum. Since $(u, v)$ is a critical point of $I$, we see that $t_1 = 1$ and
    \begin{align*}
        \zeta^{\prime}(1) &=2(a_1\|\nabla u\|_2^2 + a_2\|\nabla v\|_2^2) + 4(V_1\|u\|_2^2 + V_2\|v\|_2^2)+2(b_1\|\nabla u\|_2^4 + b_2\|\nabla v\|_2^4)\\
        &- \frac{p +\alpha+ 3}{p}\mu\int_{\mathbb{R}^3}(I_{\alpha}*|u|^p)|u|^pdx - \frac{q +\alpha+ 3}{q}\nu\int_{\mathbb{R}^3}(I_{\alpha}*|v|^q)|v|^q - 8\lambda \int_{\mathbb{R}^3} uv\,dx = 0.
    \end{align*}
    From the above observation we define the functional $J \colon H \rightarrow \mathbb{R}$ by
    \begin{align}\label{eq:NP functional}
        \begin{split}
            J(u, v) &\coloneqq 2(a_1\|\nabla u\|_2^2 + a_2\|\nabla v\|_2^2) + 4(V_1\|u\|_2^2 + V_2\|v\|_2^2) \\
                &\quad + 2(b_1\|\nabla u\|_2^4 + b_2\|\nabla v\|_2^4)- \frac{p +\alpha+ 3}{p}\mu\int_{\mathbb{R}^3}(I_{\alpha}*|u|^p)|u|^pdx \\
            &\quad - \frac{q +\alpha+3}{q}\nu\int_{\mathbb{R}^3}(I_{\alpha}*|v|^q)|v|^qdx - 8\lambda \int_{\mathbb{R}^3} uv\,dx
        \end{split}
    \end{align}
    and the Nehari--Pohozaev manifold $\mathcal{M}$ by
    \begin{align}
        \mathcal{M} \coloneqq \{(u, v) \in H \setminus \{(0, 0)\} \mid J(u, v) = 0\}.   \label{eq:NPmanifold}
    \end{align}
    It is clear that
    \begin{align}\label{eq:NP-functional-0}
    J(u, v) = \langle I^{\prime}(u, v), (u, v) \rangle + 2P(u, v).
    \end{align}
From \eqref{eq:NPmanifold} and \eqref{eq:NP-functional-0}, it follows that any nontrivial critical point of $I$ lies in $\mathcal{M}$.

  The following lemma gives very important property of the Nehari--Pohozaev manifold.
  \begin{Lem}     \label{Lem:positive on NP}
        Assume that \ref{assumption:potential} holds. There exists $\underline{C}> 0$ such that 
        \begin{align}
            \|(u, v)\| \ge \underline{C} \label{eq:positive on NP}
        \end{align}
        for all $(u,v)\in\mathcal{M}$.
    \end{Lem}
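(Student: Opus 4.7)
The plan is to exploit the defining identity $J(u,v)=0$ on $\mathcal{M}$, bound the coupling term $\lambda\int uv\,dx$ using assumption \ref{assumption:potential}, and control the nonlocal Choquard terms via the Hardy--Littlewood--Sobolev inequality (i.e. the estimate \eqref{eq:convolution-term-est}). Since on $\mathcal{M}$ the quadratic part of $J$ must balance super-quadratic terms, a positive lower bound on $\|(u,v)\|$ will follow.

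First, using the Cauchy--Schwarz inequality, Young's inequality $2ab\le a^2+b^2$, and \ref{assumption:potential}, I estimate
\begin{align*}
8\lambda\Bigl|\int_{\mathbb{R}^3}uv\,dx\Bigr|
\le 8\lambda\,\|u\|_2\|v\|_2
\le 8\delta\sqrt{V_1V_2}\,\|u\|_2\|v\|_2
\le 4\delta\bigl(V_1\|u\|_2^2+V_2\|v\|_2^2\bigr).
\end{align*}
Since $\delta\in(0,1)$, the identity $J(u,v)=0$ then yields
\begin{align*}
2\bigl(a_1\|\nabla u\|_2^2+a_2\|\nabla v\|_2^2\bigr)
+4(1-\delta)\bigl(V_1\|u\|_2^2+V_2\|v\|_2^2\bigr)
\le \tfrac{p+\alpha+3}{p}\mu\!\int_{\mathbb{R}^3}\!(I_\alpha\!*\!|u|^p)|u|^p\,dx
+\tfrac{q+\alpha+3}{q}\nu\!\int_{\mathbb{R}^3}\!(I_\alpha\!*\!|v|^q)|v|^q\,dx,
\end{align*}
where the $b_1,b_2$ terms on the left have been dropped (they are nonnegative). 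The left-hand side is bounded below by $c_0\|(u,v)\|^2$ for some $c_0=c_0(\delta)>0$, since both $\|\cdot\|_{a_i,V_i}$-summands are controlled.

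Next, since $(3+\alpha)/3\le p,q\le 3+\alpha$, the Hardy--Littlewood--Sobolev estimate \eqref{eq:convolution-term-est} applied on $\mathbb{R}^3$ gives positive constants $C_p,C_q$ with
\begin{align*}
\int_{\mathbb{R}^3}(I_\alpha*|u|^p)|u|^p\,dx\le C_p\|u\|_{H^1(\mathbb{R}^3)}^{2p}\le C_p'\|u\|_{a_1,V_1}^{2p},
\qquad
\int_{\mathbb{R}^3}(I_\alpha*|v|^q)|v|^q\,dx\le C_q'\|v\|_{a_2,V_2}^{2q},
\end{align*}
where equivalence of $\|\cdot\|_{a_i,V_i}$ with the standard $H^1$-norm is used. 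Combining these bounds with the previous inequality produces
\begin{align*}
c_0\|(u,v)\|^2\le C_1\|(u,v)\|^{2p}+C_2\|(u,v)\|^{2q}
\end{align*}
for constants $C_1,C_2>0$ independent of $(u,v)\in\mathcal{M}$.

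Finally, since $p,q\ge(3+\alpha)/3>1$, we have $2p-2>0$ and $2q-2>0$. Dividing by $\|(u,v)\|^2>0$ (which is positive as $(u,v)\ne(0,0)$) yields
\begin{align*}
c_0\le C_1\|(u,v)\|^{2p-2}+C_2\|(u,v)\|^{2q-2},
\end{align*}
from which a uniform lower bound $\|(u,v)\|\ge\underline{C}>0$ follows immediately. There is no genuine obstacle here; the only point requiring care is verifying that \ref{assumption:potential} absorbs the full coupling term with a strict margin $1-\delta>0$, which is precisely what the assumption $\delta<1$ is designed to provide.
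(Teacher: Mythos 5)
Your proof is correct and follows essentially the same route as the paper: use assumption \ref{assumption:potential} to absorb the coupling term into the $V_1, V_2$ part (the paper does this via the pointwise inequality $2\sqrt{V_1V_2}|u||v|\le V_1u^2+V_2v^2$, you via Cauchy--Schwarz plus Young, which is equivalent), drop the nonnegative Kirchhoff terms, bound the Choquard terms by \eqref{eq:convolution-term-est}, and conclude from $c_0\|(u,v)\|^2\le C_1\|(u,v)\|^{2p}+C_2\|(u,v)\|^{2q}$ with $p,q>1$. No gap.
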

    \begin{proof}We first note that 
            \begin{align*}
            2\sqrt{V_1V_2}|u||v| \le V_1u^2 + V_2v^2.
        \end{align*}
        holds and \ref{assumption:potential}, we see that
        \begin{align}\label{eq:-lambda}
        \begin{split}
            \int_{\mathbb{R}^3}(V_1u^2+V_2v^2)dx-2\lambda \int_{\mathbb{R}^3} uv\,dx &\ge 
            \int_{\mathbb{R}^3}(V_1u^2+V_2v^2)dx-2\delta \int_{\mathbb{R}^3} \sqrt{V_1V_2}|u||v|\,dx \\
            &\ge (1-\delta) \int_{\mathbb{R}^3} (V_1u^2 + V_2v^2)\,dx
        \end{split}
        \end{align}
        Since $(u, v) \in \mathcal{M}$, we have $J(u, v) = 0$. Thus, using \eqref{eq:-lambda} 
        we obtain
        \begin{align}\label{eq:estimate-H1-on-M}
        \begin{split}
            2\left(1 - \delta\right)\|(u, v)\|^2 
            &=2(1-\delta)(a_1\|\nabla u\|_2^2 + a_2\|\nabla v\|_2^2) + 2(1-\delta)\left(V_1\|u\|_2^2 + V_2\|v\|_2^2\right) \\
            &\le 2(a_1\|\nabla u\|_2^2 + a_2\|\nabla v\|_2^2)+4(1-\delta)\left(V_1\|u\|_2^2 + V_2\|v\|_2^2\right) \\ 
            &\le 2(a_1\|\nabla u\|_2^2 + a_2\|\nabla v\|_2^2)+4\left(V_1\|u\|_2^2 + V_2\|v\|_2^2-2\lambda\int_{\mathbb{R}^3}uvdx\right) \\
            &\quad +2(b_1\|\nabla u\|_2^4 + b_2\|\nabla v\|_2^4) \\
            &= \frac{p + \alpha+3}{p}\mu\int_{\mathbb{R}^3}(I_{\alpha}*|u|^p)|u|^pdx + \frac{q +\alpha+ 3}{q}\nu\int_{\mathbb{R}^3}(I_{\alpha}*|v|^q)|v|^qdx
            \end{split}
        \end{align}
        By \eqref{eq:convolution-term-est} we obtain
        \begin{align*}
        2(1-\delta)\|(u,v)\|^2\le  C_1(\|(u,v)\|^{2p}+\|(u,v)\|^{2q}),
        \end{align*}
        where $C_1=C_1(p,q,\alpha, a_1, a_2, V_1, V_2,\mu,\nu)>0$ is a constant.
        Hence, we have
        \begin{align*}
            0 < \frac{2(1 -\delta)}{C_1} &\le \|(u, v)\|^{2p - 2} + \|(u, v)\|^{2q - 2} \\
            &
            \begin{cases}
                \le 2\|(u, v)\|^{2p - 2}, & \text{if } \|(u, v)\| \le 1, \\
                \le 2\|(u, v)\|^{2q - 2}, & \text{if } \|(u, v)\| \ge 1
            \end{cases}
        \end{align*}
        and so \eqref{eq:positive on NP} holds if we set
        \begin{align*}
            \underline{C}\coloneqq \min \left\{\left(\frac{1 -\delta}{C_1}\right)^{\frac{1}{2p - 2}}, \left(\frac{(1-\delta)}{C_1}\right)^{\frac{1}{2q - 2}}\right\}.
        \end{align*}
        The proof of Lemma \ref{Lem:positive on NP} has been completed. 
    \end{proof}

The following lemma also plays a crucial role in proving that the infimum of $I(u)$ on $\mathcal{M}$ is attained and that it is a critical value of $I$.
\begin{Lem}\label{lem:energy-ineq}
Assume that \ref{assumption:potential} holds. Then for any $(u,v)\in H$ and $t>0$, the following inequality holds:
\begin{align}\label{eq:energy-ineq-01}
\begin{split}
I(u,v)=&I(u^t,v^t)+\frac{1-t^8}{8}J(u,v)+\frac{(1-t^4)^2}{4}(a_1\|\nabla u\|_2^2+a_2\|\nabla v\|_2^2) \\
 &\ \ \ +\left\{\frac{p+\alpha+3}{8p}(1-t^8)-\frac{1-t^{2(p+\alpha+3)}}{2p}\right\}\mu\int_{\mathbb{R}^3}(I_{\alpha}*|u|^p)|u|^pdx \\
 &\ \ \ +\left\{\frac{q+\alpha+3}{8q}(1-t^8)-\frac{1-t^{2(p+\alpha+3)}}{2q}\right\}\nu\int_{\mathbb{R}^3}(I_{\alpha}*|v|^q)|v|^qdx
\end{split}
\end{align}
In particular we have
\begin{align}\label{eq:energy-ineq-02}
I(u,v)\ge I(u^t,v^t)+\frac{1-t^8}{8}J(u,v)+\frac{(1-t^4)^2}{4}(a_1\|\nabla u\|_2^2+a_2\|\nabla v\|_2^2)
\end{align}
\end{Lem}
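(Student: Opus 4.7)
The plan is to prove the identity \eqref{eq:energy-ineq-01} by a direct algebraic computation, and then deduce \eqref{eq:energy-ineq-02} by showing that the two nonlocal coefficient functions in \eqref{eq:energy-ineq-01} are nonnegative under the hypothesis $p,q\ge (3+\alpha)/3$.

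First I would compute $I(u^t,v^t)$ explicitly using the scaling $w^t(x)=tw(t^{-2}x)$. A change of variables gives the six scaling rules
\begin{align*}
\|\nabla u^t\|_2^2 &= t^4\|\nabla u\|_2^2, \quad \|u^t\|_2^2 = t^8\|u\|_2^2, \quad \|\nabla u^t\|_2^4 = t^8\|\nabla u\|_2^4,\\
\int_{\mathbb{R}^3} u^t v^t\,dx &= t^8\int_{\mathbb{R}^3}uv\,dx, \quad \int_{\mathbb{R}^3}(I_\alpha * |u^t|^p)|u^t|^p\,dx = t^{2(p+\alpha+3)}\int_{\mathbb{R}^3}(I_\alpha * |u|^p)|u|^p\,dx,
\end{align*}
and the analogous identities for $v$; substituting these into \eqref{eq:energy} reproduces the expression $\zeta(t)$ displayed in \eqref{eq:zeta}. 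Forming $I(u,v)-I(u^t,v^t)$ then yields a polynomial in $t$ whose coefficients are precisely $\frac{1-t^4}{2}(a_1\|\nabla u\|_2^2+a_2\|\nabla v\|_2^2)$, $\frac{1-t^8}{2}(V_1\|u\|_2^2+V_2\|v\|_2^2)$, $\frac{1-t^8}{4}(b_1\|\nabla u\|_2^4+b_2\|\nabla v\|_2^4)$, $-\frac{\mu(1-t^{2(p+\alpha+3)})}{2p}\int(I_\alpha * |u|^p)|u|^p\,dx$, the corresponding $\nu$-term, and $-\lambda(1-t^8)\int uv\,dx$.

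Next I would verify \eqref{eq:energy-ineq-01} by matching coefficients with the right-hand side. Using the definition \eqref{eq:NP functional} of $J$, the contribution of $\frac{1-t^8}{8}J(u,v)$ to the Dirichlet, potential, Kirchhoff, $\lambda$-, and two nonlocal terms is immediate, and the potential, Kirchhoff and $\lambda$-terms match those of $I(u,v)-I(u^t,v^t)$ directly. For the Dirichlet term one checks the identity
\begin{align*}
\frac{1-t^8}{4}+\frac{(1-t^4)^2}{4}=\frac{(1-t^4)(1+t^4)+(1-t^4)^2}{4}=\frac{1-t^4}{2},
\end{align*}
which accounts for the extra summand $\frac{(1-t^4)^2}{4}(a_1\|\nabla u\|_2^2+a_2\|\nabla v\|_2^2)$. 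The two nonlocal terms produce exactly the residual coefficients $\frac{p+\alpha+3}{8p}(1-t^8)-\frac{1-t^{2(p+\alpha+3)}}{2p}$ and its $q$-analogue, completing \eqref{eq:energy-ineq-01}.

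Finally, to pass from \eqref{eq:energy-ineq-01} to \eqref{eq:energy-ineq-02} it suffices to show that both nonlocal coefficients are nonnegative, since $\mu,\nu>0$ and the convolution integrals are nonnegative, and $(1-t^4)^2\ge 0$ is already present on the right. For $m\ge 4$, set $\Phi_m(t):=m(1-t^8)-4(1-t^{2m})$, so that the coefficients equal $\Phi_m(t)/(8p)$ and $\Phi_m(t)/(8q)$ for $m=p+\alpha+3$ and $m=q+\alpha+3$, respectively; the hypothesis $p,q\ge (3+\alpha)/3$ forces $m\ge 4$. A one-variable calculation gives $\Phi_m(1)=0$ and
\begin{align*}
\Phi_m'(t)=-8mt^7+8mt^{2m-1}=8mt^7(t^{2m-8}-1),
\end{align*}
which is nonpositive on $(0,1]$ and nonnegative on $[1,\infty)$, so $t=1$ is a global minimum and $\Phi_m(t)\ge 0$ for all $t>0$. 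This step is the only nontrivial point; however, the monotonicity argument is short and routine. Dropping the nonnegative quadratic and nonlocal remainder terms in \eqref{eq:energy-ineq-01} yields \eqref{eq:energy-ineq-02}.
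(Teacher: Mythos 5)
Your proposal is correct and takes essentially the same route as the paper: the paper dismisses \eqref{eq:energy-ineq-01} as a straightforward scaling computation and reduces \eqref{eq:energy-ineq-02} to checking that the coefficient function $g(t)=\frac{r+\alpha+3}{8r}(1-t^8)-\frac{1-t^{2(r+\alpha+3)}}{2r}$ is nonnegative, which is exactly your $\Phi_m(t)/(8r)$ with $m=r+\alpha+3$. Your derivative analysis of $\Phi_m$ supplies the omitted verification (and correctly notes that only the lower bound $p,q\ge\frac{3+\alpha}{3}$, i.e.\ $m\ge 4$, is needed), so the argument is complete.
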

\begin{proof}
\eqref{eq:energy-ineq-01} is obtained by a straightforward calculation. To obtain \eqref{eq:energy-ineq-02} we only need to verify that
\begin{align*}
g(t):=\frac{r+\alpha+3}{8r}(1-t^8)-\frac{1-t^{2(r+\alpha+3)}}{2r}>g(1)=0\ \ \mbox{for}\ \ t\in [0, 1)\cup (1,\infty).
\end{align*}
holds if $0<\alpha<3$ and $(3+\alpha)/3\le r\le 3+\alpha$. 
\end{proof}
The following corollary follows immediately from the lemma above.
\begin{Cor}\label{Cor:maximum}
   Assume that \ref{assumption:potential} holds. Then for any $(u,v)\in\mathcal{M}$ it holds that
   \begin{align*}
    I(u,v)=\max_{t>0}I(u^t, v^t).
   \end{align*}
\end{Cor}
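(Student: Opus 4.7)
The plan is to derive this corollary as an essentially immediate consequence of Lemma \ref{lem:energy-ineq}, specifically from the inequality \eqref{eq:energy-ineq-02}. Since the hard analytic work (verifying that $g(t) \geq 0$ for the two exponents $p,q$ in the admissible range) is already encapsulated in that lemma, no further estimates are needed here.

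First, I would fix an arbitrary $(u,v) \in \mathcal{M}$ and an arbitrary $t > 0$, and apply the inequality \eqref{eq:energy-ineq-02} from Lemma \ref{lem:energy-ineq}. Because $(u,v)$ lies on the Nehari--Pohozaev manifold, the defining property gives $J(u,v) = 0$, so the second term on the right-hand side vanishes identically. This yields
\begin{align*}
I(u,v) \ge I(u^t, v^t) + \frac{(1-t^4)^2}{4}\bigl(a_1\|\nabla u\|_2^2 + a_2\|\nabla v\|_2^2\bigr).
\end{align*}
The remaining correction term is manifestly nonnegative, hence $I(u,v) \ge I(u^t,v^t)$ for every $t > 0$.

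To conclude that the supremum is actually attained (and thus is a maximum), I would simply observe that setting $t = 1$ gives $u^1 = u$ and $v^1 = v$, so $I(u^1, v^1) = I(u,v)$. Combining this with the previous inequality, we obtain
\begin{align*}
I(u,v) = I(u^1, v^1) \ge I(u^t, v^t) \quad \text{for all } t > 0,
\end{align*}
which is exactly the assertion $I(u,v) = \max_{t>0} I(u^t, v^t)$.

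There is no real obstacle here: all the subtlety has been absorbed into Lemma \ref{lem:energy-ineq}, which in turn rests on the elementary inequality $g(t) \ge 0$ for $t > 0$ under the hypothesis $(3+\alpha)/3 \le r \le 3+\alpha$. The only minor point worth mentioning in the write-up is that $(u,v) \ne (0,0)$ by the definition of $\mathcal{M}$, so that the gradient correction term is strictly positive for $t \ne 1$ whenever $\nabla u \not\equiv 0$ or $\nabla v \not\equiv 0$, giving the additional information that $t = 1$ is in fact the unique maximizer in this generic case; however, uniqueness is not required by the statement and need not be emphasized.
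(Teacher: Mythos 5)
Your argument is correct and is essentially the same as the paper's: both apply inequality \eqref{eq:energy-ineq-02} from Lemma \ref{lem:energy-ineq} with $J(u,v)=0$ to get $I(u,v)\ge I(u^t,v^t)$ for all $t>0$, and note that the value is attained since $(u^1,v^1)=(u,v)$. No gaps.
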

\begin{proof}
It is clear that $I(u,v)\le \max_{t>0} I(u^t, v^t)$. Suppose that $(u,v)\in \mathcal{M}$. Since $J(u,v)=0$ by Lemma \ref{lem:energy-ineq} we have $I(u,v)\ge I(u^t,v^t)$ for any $t>0$. Therefore $I(u,v)\ge \max_{t>0}I(u^t,v^t)$ holds. The proof is now complete.
\end{proof}

We now show that $\mathcal{M} \ne \emptyset$ and provide a characterization of $\mathcal{M}$ in terms of the function $I(u^t, v^t)$ of $t$. 

\begin{Lem}\label{lem:tuv_in_M}
Assume that \ref{assumption:potential} holds. Then, for any $(u,v) \in H \setminus \{(0,0)\}$, there exists a unique $t = t(u,v) > 0$ such that $(u^{t(u,v)}, v^{t(u,v)}) \in \mathcal{M}$.
\end{Lem}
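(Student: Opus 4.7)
The plan is to exploit the scaling $w^t(x) = tw(t^{-2}x)$ in order to convert the membership condition $(u^t, v^t) \in \mathcal{M}$ into a one-variable calculus problem, and then to invoke Lemma~\ref{Lem:mpstructure}. First I would record the scaling identities
\begin{align*}
\|\nabla w^t\|_2^2 &= t^4\|\nabla w\|_2^2,\quad \|w^t\|_2^2 = t^8\|w\|_2^2,\quad \int_{\mathbb{R}^3} u^t v^t\,dx = t^8\int_{\mathbb{R}^3}uv\,dx,\\
\int_{\mathbb{R}^3}(I_\alpha * |w^t|^r)|w^t|^r\,dx &= t^{2(r+\alpha+3)}\int_{\mathbb{R}^3}(I_\alpha * |w|^r)|w|^r\,dx,
\end{align*}
the last of which uses the homogeneity $I_\alpha(t^2 x) = t^{-2(3-\alpha)}I_\alpha(x)$. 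Substituting into \eqref{eq:zeta} writes $\zeta(t) := I(u^t, v^t)$ in the form
\begin{align*}
\zeta(t) = C_1 t^4 + C_2 t^8 - C_3 t^{2(p+\alpha+3)} - C_4 t^{2(q+\alpha+3)},
\end{align*}
and a direct comparison with \eqref{eq:NP functional} yields the key identity $J(u^t, v^t) = t\,\zeta'(t)$. Hence $(u^t, v^t) \in \mathcal{M}$ if and only if $\zeta'(t) = 0$.

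Next I would verify the positivity conditions required by Lemma~\ref{Lem:mpstructure}. Since $\nabla w \equiv 0$ and $w \in H^1(\mathbb{R}^3)$ force $w \equiv 0$, the hypothesis $(u,v) \ne (0,0)$ gives $C_1 = \tfrac12(a_1\|\nabla u\|_2^2 + a_2\|\nabla v\|_2^2) > 0$. For $C_2 = \tfrac12(V_1\|u\|_2^2+V_2\|v\|_2^2) + \tfrac14(b_1\|\nabla u\|_2^4+b_2\|\nabla v\|_2^4) - \lambda\int_{\mathbb{R}^3}uv\,dx$, the same estimate already used in \eqref{eq:-lambda} --- namely $2\sqrt{V_1V_2}|uv| \le V_1 u^2 + V_2 v^2$ combined with $\lambda \le \delta\sqrt{V_1V_2}$ and $\delta < 1$ from \ref{assumption:potential} --- gives $C_2 \ge \tfrac{1-\delta}{2}(V_1\|u\|_2^2+V_2\|v\|_2^2) + \tfrac14(b_1\|\nabla u\|_2^4+b_2\|\nabla v\|_2^4) > 0$. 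Finally, $C_3 = \tfrac{\mu}{2p}\int_{\mathbb{R}^3}(I_\alpha*|u|^p)|u|^p\,dx$ and $C_4 = \tfrac{\nu}{2q}\int_{\mathbb{R}^3}(I_\alpha*|v|^q)|v|^q\,dx$ are nonnegative, with at least one of them strictly positive depending on which of $u,v$ is nontrivial.

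When both $C_3, C_4 > 0$, applying Lemma~\ref{Lem:mpstructure} to $\zeta$ immediately produces a unique critical point $t(u,v) > 0$, which is precisely the unique $t$ with $(u^t, v^t) \in \mathcal{M}$. The main --- and only --- obstacle is the edge case in which one of $u, v$ vanishes, so that one of $C_3, C_4$ equals zero and Lemma~\ref{Lem:mpstructure} does not literally apply. I would handle this by a short direct argument: factoring $\zeta'(t) = t^3 h(t)$, one checks $h(0) = 4C_1 > 0$ and $h(t) \to -\infty$ as $t \to \infty$, while $h'(t)/t^3$ is the constant $32 C_2$ minus an increasing function of $t$ (using $2(p+\alpha+3) - 8 > 0$, which holds whenever $p \ge (3+\alpha)/3$), hence strictly decreasing from a positive value to $-\infty$. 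Consequently $h$ is increasing then decreasing with positive maximum, and therefore has exactly one zero on $(0,\infty)$, giving the desired unique $t(u,v)$ in all cases.
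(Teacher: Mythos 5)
Your proposal is correct and takes essentially the same route as the paper: the scaling identities put $\zeta(t)=I(u^t,v^t)$ in the form $C_1t^4+C_2t^8-C_3t^{2(p+\alpha+3)}-C_4t^{2(q+\alpha+3)}$, the identity $\zeta'(t)=t^{-1}J(u^t,v^t)$ converts membership in $\mathcal{M}$ into $\zeta'(t)=0$, and Lemma~\ref{Lem:mpstructure} gives the unique critical point, exactly as in the paper's argument. Your additional treatment of the case where $u=0$ or $v=0$ (so that one of $C_3$, $C_4$ vanishes and Lemma~\ref{Lem:mpstructure}, stated for strictly positive coefficients, does not literally apply) is a sound and worthwhile refinement of a point the paper passes over silently, and your verification that assumption~\ref{assumption:potential} yields $C_2>0$ matches the estimate \eqref{eq:-lambda} used elsewhere in the paper.
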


\begin{proof}
   Let $(u, v) \in H \setminus \{(0, 0)\}$ be fixed and define $\zeta(t) = I(u^t, v^t)$. Then we see that
        \begin{align*}
            \zeta^{\prime}(t) &= 2t^3(a_1\|\nabla u\|_2^2 + a_2\|\nabla v\|_2^2) + 4t^7\left(V_1\|u\|_2^2 + V_2\|v\|_2^2\right) + 2t^7(b_1\|\nabla u\|_2^4 + b_2\|\nabla v\|_2^2) \\
            &\quad - \frac{p +\alpha+ 3}{p}\mu t^{2(p +\alpha+ 3)-1}\int_{\mathbb{R}^3}(I_{\alpha}*|u|^p)|u|^pdx \\
            &\quad - \frac{q +\alpha+ 3}{q}t^{2(q + \alpha+3)-1}\nu\int_{\mathbb{R}^3}(I_{\alpha}*|v|^q)|v|^qdx- 8\lambda t^7 \int_{\mathbb{R}^3} uv\,dx \\
            &= t^{-1}J(u^t, v^t).
        \end{align*}
        By Lemma \ref{Lem:mpstructure}, $\zeta$ has a unique critical point $t=t(u,v) > 0$ corresponding to its maximum. Hence, we obtain $J(u^{t(u,v)}, v^{t(u,v)}) = 0$, which means $(u^{t(u,v)}, v^{t(u,v)}) \in \mathcal{M}$.
    \end{proof}

Combining Corollary \ref{Cor:maximum} and Lemma \ref{lem:tuv_in_M}, we get the following identity.
\begin{align}\label{eq:Energy level}
m:=\inf_{(u,v)\in\mathcal{M}}I(u,v)=\inf_{(u,v)\in H\setminus \{(0,0)\}}\max_{t>0}I(u^t, v^t).
\end{align}
    
We now prove $m>0$.  
    \begin{Lem}     \label{Lem:inf is positive}
         Assume that \ref{assumption:potential} holds. Then $m > 0$.
    \end{Lem}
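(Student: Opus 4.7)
\textbf{Proof plan for Lemma \ref{Lem:inf is positive}.}
The plan is to exhibit $I(u,v)$ on the manifold $\mathcal{M}$ as a sum of manifestly non-negative quantities and then control those quantities from below via Lemma \ref{Lem:positive on NP} together with the key estimate \eqref{eq:estimate-H1-on-M} used in its proof.

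\emph{Step 1: Rewrite $I$ on $\mathcal{M}$.} I would take the identity \eqref{eq:energy-ineq-01} from Lemma \ref{lem:energy-ineq} and let $t\to 0^+$. Since $I(u^t,v^t)\to 0$ and $1-t^8,\ (1-t^4)^2,\ 1-t^{2(p+\alpha+3)},\ 1-t^{2(q+\alpha+3)}\to 1$, and since $J(u,v)=0$ on $\mathcal{M}$, this identity gives, for every $(u,v)\in\mathcal{M}$,
\begin{align*}
I(u,v) \;=\; \frac{1}{4}\bigl(a_1\|\nabla u\|_2^2 + a_2\|\nabla v\|_2^2\bigr)
        \;+\; \frac{\mu(p+\alpha-1)}{8p}\int_{\mathbb{R}^3}(I_\alpha * |u|^p)|u|^p\,dx
        \;+\; \frac{\nu(q+\alpha-1)}{8q}\int_{\mathbb{R}^3}(I_\alpha * |v|^q)|v|^q\,dx.
\end{align*}
Because $p,q\ge(3+\alpha)/3$ one has $p+\alpha-1\ge 4\alpha/3>0$ and similarly for $q$, so every coefficient is strictly positive. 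In particular each of the three terms on the right-hand side is non-negative.

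\emph{Step 2: Quantitative lower bound.} Next I would recall inequality \eqref{eq:estimate-H1-on-M} from the proof of Lemma \ref{Lem:positive on NP}, which states that on $\mathcal{M}$
\begin{align*}
2(1-\delta)\|(u,v)\|^2 \;\le\; \frac{p+\alpha+3}{p}\mu\int_{\mathbb{R}^3}(I_\alpha*|u|^p)|u|^p\,dx \;+\; \frac{q+\alpha+3}{q}\nu\int_{\mathbb{R}^3}(I_\alpha*|v|^q)|v|^q\,dx.
\end{align*}
Setting $\kappa:=\min\bigl\{\tfrac{p+\alpha-1}{p+\alpha+3},\,\tfrac{q+\alpha-1}{q+\alpha+3}\bigr\}>0$, I can rewrite each Choquard term in Step~1 as $\tfrac{\kappa}{8}$ times (at least) the corresponding term in the right-hand side above. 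Dropping the non-negative gradient contribution thus yields
\begin{align*}
I(u,v) \;\ge\; \frac{\kappa}{8}\Bigl[\tfrac{p+\alpha+3}{p}\mu\textstyle\int(I_\alpha*|u|^p)|u|^p + \tfrac{q+\alpha+3}{q}\nu\textstyle\int(I_\alpha*|v|^q)|v|^q\Bigr] \;\ge\; \frac{\kappa(1-\delta)}{4}\|(u,v)\|^2.
\end{align*}

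\emph{Step 3: Conclude.} Finally, Lemma \ref{Lem:positive on NP} provides $\|(u,v)\|\ge \underline{C}>0$ uniformly on $\mathcal{M}$, so $I(u,v)\ge \tfrac{\kappa(1-\delta)}{4}\underline{C}^{\,2}$ for all $(u,v)\in\mathcal{M}$; taking the infimum gives $m\ge \tfrac{\kappa(1-\delta)}{4}\underline{C}^{\,2}>0$.

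There is no substantive obstacle here: the only real point requiring care is verifying that the Choquard coefficients $p+\alpha-1$ and $q+\alpha-1$ appearing in Step~1 are strictly positive throughout the entire admissible range $(3+\alpha)/3 \le p,q \le 3+\alpha$ (so that the ratios defining $\kappa$ are strictly positive), which follows directly from $\alpha\in(0,3)$.
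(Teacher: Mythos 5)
Your proof is correct, but it takes a somewhat different route from the paper's. The paper subtracts the multiple $\tfrac{1}{2(p+3+\alpha)}J(u,v)$ (after assuming without loss of generality $p\le q$), so that the quadratic part $\int_{\mathbb{R}^3}(V_1u^2+V_2v^2-2\lambda uv)\,dx$ survives with the positive coefficient $\tfrac{p+\alpha-1}{2(p+\alpha+3)}$; assumption \ref{assumption:potential} via \eqref{eq:-lambda} then gives $I(u,v)\ge \tfrac{p+\alpha-1}{2(p+\alpha+3)}(1-\delta)\|(u,v)\|^2$ in one step, and Lemma \ref{Lem:positive on NP} finishes. You instead subtract $\tfrac18 J(u,v)$, i.e.\ you identify $I$ on $\mathcal{M}$ with the functional $\Phi$ of \eqref{eq:functional Psi} (your $t\to0^+$ limit of \eqref{eq:energy-ineq-01} is a legitimate, if roundabout, way to get this; computing $I-\tfrac18 J$ directly is quicker), which kills the $V$-terms and the coupling term, so you must then re-import the inequality \eqref{eq:estimate-H1-on-M} from the proof of Lemma \ref{Lem:positive on NP} to convert the surviving Choquard terms back into $(1-\delta)\|(u,v)\|^2$. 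Both chains are valid and end with the same use of $\|(u,v)\|\ge\underline{C}$; your version treats $p$ and $q$ symmetrically (no reduction to $p\le q$) and reuses the decomposition $\Phi$ that the paper introduces later anyway, at the cost of one extra invocation of \eqref{eq:estimate-H1-on-M}, and your check that $p+\alpha-1\ge \tfrac{4\alpha}{3}>0$ on the whole admissible range is exactly the point that needed verifying.
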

   \begin{proof}Without loss of generality we may assume that $p\le q$.  Let $(u, v) \in \mathcal{M}$. Then we have $J(u,v)=0$. From the definition of $I(u,v)$ in \eqref{eq:energy} and $J(u,v)$ in \eqref{eq:NP functional} 
       we obtain 
    \begin{align}\label{eq:I is positive}
        \begin{split}
 &\ I(u, v) \\
=&\ I(u,v)-\frac{1}{2(p+3+\alpha)}J(u,v) \\
=&\ \frac{p+\alpha+1}{2(p+3+\alpha)}(a_1\|\nabla u\|_2^2 + a_2\|\nabla v\|_2^2)+\frac{p+\alpha-1}{2(p+\alpha+3)}\left\{\int_{\mathbb{R}^3}(V_1u^2+V_2v^2-2\lambda uv)dx\right\} \\
            &\quad+\frac{p+\alpha-1}{4(p+\alpha+3)}(b_1\|\nabla u\|_2^4+b_2\|\nabla v\|_2^4)+\nu\frac{q-p}{2q(p+3+\alpha)}\int_{\mathbb{R}^3}(I_{\alpha}*|v|^q)|v|^qdx \\
            \ge&\ \frac{p+\alpha+1}{2(p+\alpha+3)}(a_1\|\nabla u\|_2^2+a_2\|\nabla v\|_2^2)+\frac{p+\alpha-1}{2(p+\alpha+3)}\left\{\int_{\mathbb{R}^3}(V_1u^2+V_2v^2-2\lambda uv)dx\right\}  \\
                   \ge&\ \frac{p+\alpha-1}{2(p+\alpha+3)}(1-\delta)\|(u,v)\|^2.
\end{split}
       \end{align}
       Therefore we obtain
       \begin{align*}     
              m=\inf_{(u,v)\in\mathcal{M}}I(u,v)\ge(1-\delta)\frac{p+\alpha-1}{2(p+\alpha+3)}\underline{C}^2>0
       \end{align*}
       and Lemma \ref{Lem:inf is positive} has been proved.
   \end{proof}
 The next lemma also plays a very important role in Section 4 to prove that $m$ is achieved.
   
   \begin{Lem}     \label{Lem:weak limit identity}
        Assume that \ref{assumption:potential} holds. If $u_n \rightharpoonup u$ and $v_n \rightharpoonup v$ in $H^1(\mathbb{R}^3)$, then passing to a subsequence, we have the following identities:
        \begin{align}
            \begin{split}
                I(u_n, v_n) &= I(u, v) + I(u_n - u, v_n - v) \\
                &\quad + \frac{1}{2}\left(b_1\|\nabla u\|_2\|\nabla (u_n - u)\|_2^2 + b_2\|\nabla v\|_2^2\|\nabla (v_n - v)\|_2^2\right) + o(1),  \label{eq:Identity of I} 
            \end{split} \\
            \begin{split}
                \langle I^{\prime}(u_n, v_n), (u_n, v_n) \rangle &= \langle I^{\prime}(u, v), (u, v) \rangle + \langle I^{\prime}(u_n - u, v_n - v), (u_n - u, v_n - v) \rangle \\
                &\quad + 2\left(b_1\|\nabla u\|_2^2\|\nabla (u_n - u)\|_2^2 + b_2\|\nabla v\|_2^2\|\nabla (v_n - v)\|_2^2\right) + o(1),   \label{eq:Identity of I'}
            \end{split}
        \end{align}
        and
        \begin{align} \label{eq:Identity of J}
            \begin{split}
                J(u_n, v_n) &= J(u, v) + J(u_n - u, v_n - v) \\
                &\quad + 4\left(b_1\|\nabla u\|_2^2\|\nabla (u_n - u)\|_2^2 + b_2\|\nabla v\|_2^2\|\nabla (v_n - v)\|_2^2\right) + o(1).  
            \end{split}
        \end{align}
        Here $o(1)$ means that $o(1) \to 0$ as $n \to \infty$.
    \end{Lem}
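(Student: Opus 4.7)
The plan is to pass to a subsequence along which $u_n\to u$ and $v_n\to v$ almost everywhere on $\mathbb{R}^3$ (obtained by extracting a diagonal subsequence from the Rellich--Kondrachov compact embedding $H^1(B_R)\hookrightarrow L^2(B_R)$ on an exhausting family of balls), and then decompose each term appearing in $I$, $\langle I',\cdot\rangle$ and $J$ into the value at $(u,v)$ plus the value at $(u_n-u,v_n-v)$ plus a remainder that vanishes as $n\to\infty$. Grouping the terms according to their coefficients in \eqref{eq:energy}, \eqref{eq:NP functional} and the expression of $\langle I'(u,v),(u,v)\rangle$ will then yield \eqref{eq:Identity of I}, \eqref{eq:Identity of I'} and \eqref{eq:Identity of J}.

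The quadratic terms are straightforward. Weak convergence $\nabla u_n\rightharpoonup\nabla u$ in $L^2$ gives
\[
\|\nabla u_n\|_2^2=\|\nabla u\|_2^2+\|\nabla(u_n-u)\|_2^2+2\int_{\mathbb{R}^3}\nabla u\cdot\nabla(u_n-u)\,dx=\|\nabla u\|_2^2+\|\nabla(u_n-u)\|_2^2+o(1),
\]
and the same splitting holds for $\|u_n\|_2^2$, $\|\nabla v_n\|_2^2$, $\|v_n\|_2^2$. For the linear coupling I would expand
\[
\int_{\mathbb{R}^3}u_nv_n\,dx=\int_{\mathbb{R}^3}uv\,dx+\int_{\mathbb{R}^3}(u_n-u)(v_n-v)\,dx+\int_{\mathbb{R}^3}u(v_n-v)\,dx+\int_{\mathbb{R}^3}v(u_n-u)\,dx,
\]
and discard the last two integrals by weak $L^2$-convergence. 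For the Choquard nonlocalities I would invoke Lemma~\ref{lem:nonlocal-B-L} with $r=p$ (respectively $r=q$) using the boundedness of $\{u_n\}$, $\{v_n\}$ in $L^{2\cdot 3p/(3+\alpha)}$, $L^{2\cdot 3q/(3+\alpha)}$, guaranteed by the Sobolev embedding $H^1(\mathbb{R}^3)\hookrightarrow L^s(\mathbb{R}^3)$ in the admissible range.

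The one place where a genuine cross-term survives is the Kirchhoff quartic. Squaring the gradient decomposition gives
\[
\|\nabla u_n\|_2^4=\bigl(\|\nabla u\|_2^2+\|\nabla(u_n-u)\|_2^2+o(1)\bigr)^2=\|\nabla u\|_2^4+\|\nabla(u_n-u)\|_2^4+2\|\nabla u\|_2^2\|\nabla(u_n-u)\|_2^2+o(1),
\]
and analogously for $\|\nabla v_n\|_2^4$. Multiplying by the coefficients $\tfrac{b_i}{4}$ in $I$, $b_i$ in $\langle I'(\cdot),(\cdot,\cdot)\rangle$, and $2b_i$ in $J$ yields the cross-term factors $\tfrac12$, $2$ and $4$ appearing on the right-hand sides of \eqref{eq:Identity of I}, \eqref{eq:Identity of I'} and \eqref{eq:Identity of J} respectively.

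The main (only) obstacle is making sure the nonlocal decomposition is legitimate. This is exactly what Lemma~\ref{lem:nonlocal-B-L} provides, so once the a.e.\ convergence and the uniform boundedness in the relevant $L^s$-spaces have been verified, the argument reduces to collecting the terms with their correct coefficients. After summing the contributions listed above with the signs and weights dictated by \eqref{eq:energy}, the definition of $J$, and the formula for $\langle I'(u,v),(u,v)\rangle$, the three asserted identities follow immediately.
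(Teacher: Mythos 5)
Your proposal is correct and follows essentially the same route as the paper: split the quadratic terms via weak-convergence orthogonality, treat the coupling term by weak $L^2$-convergence, apply the nonlocal Brezis--Lieb lemma (Lemma~\ref{lem:nonlocal-B-L}) to the Choquard integrals, and obtain the surviving cross-terms by squaring the gradient decomposition, with the coefficients $\tfrac12$, $2$, $4$ arising exactly as you describe. The only cosmetic difference is that you justify the a.e.\ convergent subsequence by a Rellich--Kondrachov diagonal argument while the paper simply records it among the standard consequences of weak convergence.
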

  \begin{proof}
        Set
        \begin{align}
            & I_1(u, v) \coloneqq a_1\|\nabla u\|_2^2 + a_2\|\nabla v\|_2^2 + V_1\|u\|_2^2 + V_2|v|_2^2, \label{eq:I_1} \\
            & I_2(u, v) \coloneqq b_1\|\nabla u\|_2^4 + b_2\|\nabla v\|_2^4, \label{eq:I_2} \\
            & I_3(u, v) \coloneqq \frac{\mu}{2p}\int_{\mathbb{R}^3}(I_{\alpha}*|u|^p)|u|^pdx + \frac{\nu}{2q}\int_{\mathbb{R}^3}(I_{\alpha}*|v|^q)|v|^qdx, \label{eq:I_3} \\
            & I_4(u, v) \coloneqq \lambda \int_{\mathbb{R}^3} uv\,dx.   \label{eq:I_4}
        \end{align}
        Then $I(u,v)$ can be written by
        \begin{align*}
   I(u,v)=\frac{1}{2}I_1(u,v)+\frac{1}{4}I_2(u,v)-I_3(u,v)-I_4(u,v).
\end{align*}

Let $\omega_n \coloneqq u_n - u$ and $\sigma_n \coloneqq v_n - v$. Then along a subsequence,
        \begin{empheq}[left = \empheqlbrace]{align*}
            & (\omega_n, \sigma_n) \rightharpoonup (0, 0) \text{ weakly in } H, \\
               & (\omega_n, \sigma_n) \rightharpoonup (0, 0) \text{ weakly in } D^{1,2}(\mathbb{R}^3)\times D^{1,2}(\mathbb{R}^3), \\
            & (\omega_n, \sigma_n) \to (0, 0) \text{ strongly in } L_{\mathrm{loc}}^s(\mathbb{R}^3) \times L_{\mathrm{loc}}^s(\mathbb{R}^3) \text{ for all } s \in [1, 6), \\
            & (\omega_n, \sigma_n) \to (0, 0) \text{ almost everywhere in } \mathbb{R}^3.
        \end{empheq}
        Noting that
        \begin{align*}
        &\|\nabla u_n\|^2= \|\nabla \omega_n + \nabla u\|^2= \|\nabla \omega_n\|^2 + 2\nabla \omega_n \cdot \nabla u + \|\nabla u\|^2, \\
        &\|\nabla v_n\|^2= \|\nabla \sigma_n+\nabla u\|^2=\|\nabla\sigma_n\|^2+2\nabla\sigma_n\cdot\nabla v+\|\nabla v\|^2
        \end{align*}
        we see that
        \begin{align}
        \begin{split}\label{eq:gradient_u_and_v}
            \|\nabla u_n\|_2^2 = \|\nabla u\|_2^2 + \|\nabla \omega_n\|_2^2 + o(1),\ \|\nabla v_n\|_2^2 = \|\nabla v\|_2^2 + \|\nabla \sigma_n\|_2^2 +o(1).
        \end{split}
        \end{align}
        Since $u_n \rightharpoonup u$ and $v_n\rightharpoonup v$ in $H^1(\mathbb{R}^3)$, $\{\|u_n\|_{a_1, V_1}\}$ and $\{\|v_n\|_{a_2, V_2}\}$ are bounded sequence in $\mathbb{R}$. By the Sobolev embedding theorem, $\{u_n\}$ is bounded in $L^{\frac{6s}{3+\alpha}}(\mathbb{R}^3)$ for any $s \in [(3+\alpha)/3, 3+\alpha]$. Hence, using the nonlocal  Br\'{e}zis--Lieb Lemma(see Lemma \ref{lem:nonlocal-B-L}), we have
        \begin{align}
        \begin{split}\label{eq:Brezis-Lieb}
           & \int_{\mathbb{R}^3}(I_{\alpha}*|u_n|^s)|u_n|^sdx = \int_{\mathbb{R}^3}(I_{\alpha}*|u|^s)|u|^sdx + \int_{\mathbb{R}^3}(I_{\alpha}*|\omega_n|^s)|\omega_n|^sdx + o(1)\\
    & \int_{\mathbb{R}^3}(I_{\alpha}*|v_n|^s)|v_n|^sdx = \int_{\mathbb{R}^3}(I_{\alpha}*|v|^s)|v|^sdx + \int_{\mathbb{R}^3}(I_{\alpha}*|\sigma_n|^s)|\sigma_n|^sdx + o(1)\\
            &\quad\quad\quad \text{ for }\ \ s \in \left[\frac{3+\alpha}{3}, 3+\alpha\right].  
\end{split}
        \end{align}
        Combining \eqref{eq:gradient_u_and_v}, \eqref{eq:Brezis-Lieb}, we obtain that
        \begin{align}
            I_1(u_n, v_n) &= I_1(u, v) + I_1(\omega_n, \sigma_n) + o(1).     \label{eq:Identity of I_1} \\
            I_3(u_n, v_n) &= I_3(u, v) + I_3(\omega_n, \sigma_n) + o(1).     \label{eq:Identity of I_3}
        \end{align}
        Since
        \begin{align}\label{4thorder}
        \begin{split}
      \|\nabla u_n\|_2^4 &= (\|\nabla u\|_2^2 + \|\nabla \omega_n\|_2^2 + o(1))^2 \\
            &= \|\nabla u\|_2^4 + 2\|\nabla u\|_2^2\|\nabla \omega_n\|_2^2 + \|\nabla \omega_n\|_2^4 + o(1),\\
            \|\nabla v_n\|_2^4 &= (\|\nabla v\|_2^2 + \|\nabla \sigma_n\|_2^2 + o(1))^2 \\
            &= \|\nabla v\|_2^4 + 2\|\nabla v\|_2^2\|\nabla \sigma_n\|_2^2 + \|\nabla \sigma_n\|_2^4 + o(1),
            \end{split}
        \end{align}
        we have
        \begin{align}
            I_2(u_n, v_n) = I_2(u, v) + I_2(\omega_n, \sigma_n) + 2(b_1\|\nabla u\|_2^2\|\nabla \omega_n\|_2^2 + b_2\|\nabla v\|_2^2\|\nabla \sigma_n\|_2^2) + o(1).    \label{eq:Identity of I_2}
        \end{align}
        For the coupling term, we write
        \begin{align*}
            \int_{\mathbb{R}^3} u_nv_n\,dx = \int_{\mathbb{R}^3} \omega_n\sigma_n\,dx + \int_{\mathbb{R}^3} \omega_nv\,dx + \int_{\mathbb{R}^3} u\sigma_n\,dx + \int_{\mathbb{R}^3} uv\,dx.
        \end{align*}
        We claim that 
        \begin{align*}
        \int_{\mathbb{R}^3} \omega_nv\,dx \to 0\ \ \text{and}\ \ \int_{\mathbb{R}^3} u\sigma_n\,dx \to 0\ \ \text{as}\ \ n \to \infty. 
        \end{align*}
        Let
        \begin{align*}
            f(\omega) \coloneqq \int_{\mathbb{R}^3} \omega v\,dx, \quad \omega \in H^1(\mathbb{R}^3)
        \end{align*}
        By the Schwartz inequality, we see that
        \begin{align*}
            |f(\omega)| &\le \|\omega\|_2\|v\|_2 \\
            &\le \frac{1}{\sqrt{V_1}}\|v\|_2\|\omega\|_{a_1, V_1}.
        \end{align*}
        Hence $f$ is a bounded linear functional on $H^1(\mathbb{R}^3)$. Therefore $f(\omega_n) \to 0$ since $\omega_n \rightharpoonup 0$. Similarly, we get 
        \begin{align*}
            \int_{\mathbb{R}^3} u\sigma_n\,dx \to 0.
        \end{align*}
     Thus, we obtain that
\begin{align}
\label{product}
\int_{\mathbb{R}^3}u_nv_ndx=\int_{\mathbb{R}^3}uvdx+\int_{\mathbb{R}^3}\omega_n\sigma_ndx+o(1),
\end{align}
        that is,
        \begin{align}
            I_4(u_n, v_n) &= I_4(u, v) + I_4(\omega_n, \sigma_n) + o(1).     \label{eq:Identity of I_4}
        \end{align}
        Combining \eqref{eq:Identity of I_1}, \eqref{eq:Identity of I_3}, \eqref{eq:Identity of I_2} with \eqref{eq:Identity of I_4}, we obtain \eqref{eq:Identity of I}. Similarly we can obtain \eqref{eq:Identity of I'} and \eqref{eq:Identity of J}. 
    \end{proof}

    \section{Proof of Theorem \ref{Th:non-critical} }

 In this section we prove Theorem \ref{Th:non-critical}. 
  \begin{Lem}     \label{Lem:m is achieved}
        Assume that \ref{assumption:potential} holds. Then $m$ is achieved for some $(u,v)\in\mathcal{M}$, that is, there exists $(u,v)\in\mathcal{M}$ such that $I(u,v)=m$.
    \end{Lem}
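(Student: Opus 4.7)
The plan is to take a minimizing sequence $\{(u_n, v_n)\} \subset \mathcal{M}$ with $I(u_n, v_n) \to m$ and show that, after translation, it has a nontrivial weak limit $(u, v)$ whose scaling from Lemma~\ref{lem:tuv_in_M} lies in $\mathcal{M}$ and attains $m$. The coercive bound on $\mathcal{M}$ derived inside the proof of Lemma~\ref{Lem:inf is positive} gives boundedness of $(u_n, v_n)$ in $H$. To rule out vanishing, I would combine $J(u_n, v_n) = 0$ with the estimate inside the proof of Lemma~\ref{Lem:positive on NP} to bound the Choquard nonlocal terms below by a positive constant; under $\tfrac{3+\alpha}{3} < p, q < 3+\alpha$ the exponents $\tfrac{6p}{3+\alpha}, \tfrac{6q}{3+\alpha}$ lie strictly in $(2, 6)$, so Lions' vanishing lemma (Lemma~\ref{Lions-th}) combined with the HLS inequality (Lemma~\ref{H-L-S-ineq}) would send these terms to zero, a contradiction. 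Thus there exist $y_n \in \mathbb{R}^3$ such that the translates $\tilde u_n := u_n(\cdot + y_n)$, $\tilde v_n := v_n(\cdot + y_n)$ converge weakly in $H^1$ along a subsequence to $(u, v) \neq (0, 0)$, while remaining a minimizing sequence on $\mathcal{M}$ by translation invariance.

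Next, introduce $F := I - \tfrac{1}{2(p+\alpha+3)} J$, whose explicit formula from the proof of Lemma~\ref{Lem:inf is positive} is a sum of non-negative, weakly lower semi-continuous terms under \ref{assumption:potential}: the Dirichlet quadratic, the positive-definite form $\int(V_1 u^2 + V_2 v^2 - 2\lambda uv)$, the Kirchhoff quartic $b_i\|\nabla \cdot\|_2^4$, and $\int(I_\alpha \ast |v|^q)|v|^q$ (wlsc by the nonlocal Brezis--Lieb lemma, Lemma~\ref{lem:nonlocal-B-L}). Since $F \equiv I$ on $\mathcal{M}$, one has $F(u, v) \le \liminf_n F(\tilde u_n, \tilde v_n) = m$; moreover $F(u, v) > 0$ since $(u, v) \neq (0, 0)$. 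The map $t \mapsto F(u^t, v^t)$ is strictly increasing on $(0, \infty)$, each summand scaling as $t^4$, $t^8$, or $t^{2(q+\alpha+3)}$. By Lemma~\ref{lem:tuv_in_M} there is a unique $t^* > 0$ with $(u^{t^*}, v^{t^*}) \in \mathcal{M}$, so $I(u^{t^*}, v^{t^*}) \ge m$; once $t^* \le 1$ is known (equivalently, $J(u, v) \le 0$), the monotonicity of $F$ gives $I(u^{t^*}, v^{t^*}) = F(u^{t^*}, v^{t^*}) \le F(u, v) \le m$, yielding the minimizer.

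The main obstacle is therefore the sign condition $J(u, v) \le 0$, which I would establish by contradiction. Lemma~\ref{Lem:weak limit identity} applied to $(\tilde u_n, \tilde v_n) \in \mathcal{M}$ gives
\begin{equation*}
J(u, v) + J(\omega_n, \sigma_n) + 4\bigl(b_1\|\nabla u\|_2^2\|\nabla \omega_n\|_2^2 + b_2\|\nabla v\|_2^2\|\nabla \sigma_n\|_2^2\bigr) = o(1),
\end{equation*}
with $\omega_n := \tilde u_n - u$, $\sigma_n := \tilde v_n - v$ and non-negative Kirchhoff cross-term. If one had $J(u, v) > 0$, then $J(\omega_n, \sigma_n) \le -J(u, v) + o(1) < 0$ eventually, and Lemma~\ref{lem:tuv_in_M} would produce $t_n \in (0, 1)$ with $(\omega_n^{t_n}, \sigma_n^{t_n}) \in \mathcal{M}$, whence $I(\omega_n^{t_n}, \sigma_n^{t_n}) \ge m$. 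The Brezis--Lieb splitting of $F$ from Lemma~\ref{Lem:weak limit identity} yields $F(\omega_n, \sigma_n) \le m - F(u, v) + o(1)$, and the monotonicity of $F(w^t, z^t)$ in $t$ (with $t_n < 1$) gives $I(\omega_n^{t_n}, \sigma_n^{t_n}) = F(\omega_n^{t_n}, \sigma_n^{t_n}) \le F(\omega_n, \sigma_n) \le m - F(u, v) + o(1) < m$, contradicting $I(\omega_n^{t_n}, \sigma_n^{t_n}) \ge m$ since $F(u, v) > 0$. Hence $J(u, v) \le 0$, and the argument concludes.
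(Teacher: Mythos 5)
Your proposal is correct and follows essentially the same route as the paper's proof: a minimizing sequence on $\mathcal{M}$, boundedness from the coercivity estimate, exclusion of vanishing via Lions' lemma and Hardy--Littlewood--Sobolev, translation to a nontrivial weak limit, the Brezis--Lieb splitting of Lemma~\ref{Lem:weak limit identity}, a contradiction argument giving $J(u,v)\le 0$, and a scaling argument to conclude. The only notable variations are that you use $F=I-\tfrac{1}{2(p+\alpha+3)}J$, which requires the harmless normalization $p\le q$ so that the residual Choquard coefficient $\nu(q-p)/\bigl(2q(p+\alpha+3)\bigr)$ is nonnegative (the paper's $\Phi=I-\tfrac18 J$ cancels the $V$-, coupling- and Kirchhoff-terms and needs no ordering), and that you close both the sign argument and the final step via monotonicity of $t\mapsto F(u^t,v^t)$ together with $t^*\le 1$, where the paper instead invokes the inequality \eqref{eq:energy-ineq-02}; these mechanisms are equivalent here, and your conclusion (the minimizer $(u^{t^*},v^{t^*})$, which strict monotonicity in fact forces to be $(u,v)$ itself) establishes the stated lemma.
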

    \begin{proof}
        We define the $C^1$ functional $\Phi \colon H \rightarrow \mathbb{R}$ by
        \begin{align} \label{eq:functional Psi} 
            \begin{split}
                \Phi(u, v) &\coloneqq I(u,v)-\frac{1}{8}J(u,v) \\
     &=\frac{1}{4}(a_1\|\nabla u\|_2^2+a_2\|\nabla v\|_2^2) \\
     &\ \ \ \ \ \ \ \ \ \ +\frac{p+\alpha-1}{8p}\mu\int_{\mathbb{R}^3}(I_{\alpha}*|u|^p)|u|^pdx+\frac{q+\alpha-1}{8q}\nu\int_{\mathbb{R}^3}(I_{\alpha}*|v|^q)|v|^qdx
            \end{split}
        \end{align}
        for $(u, v) \in H$. We note that $\Phi(u, v) = I(u, v) \ge m$ for $(u, v) \in \mathcal{M}$. Let $\{(u_n, v_n)\} \subset \mathcal{M}$ be a minimizing sequence for $m$, that is,
        \begin{align}
            I(u_n, v_n) \to m=\inf_{(u, v) \in \mathcal{M}} I(u, v), \quad J(u_n, v_n) = 0.   \label{eq:minimizing seq}
        \end{align}
        By \eqref{eq:I is positive}, we can see that $\{(u_n, v_n)\}$ is bounded in $H$. Let us show that there exist $\xi, R > 0$ such that
        \begin{align} \label{eq:concentration compactness}
            \sup_{y \in \mathbb{R}^3} \int_{B_R(y)} (u_n^2 + v_n^2)\,dx > \xi.  
        \end{align}
        holds. In fact, suppose by contradiction that \eqref{eq:concentration compactness} does not hold. Then, for any $R > 0$ we have
        \begin{align*}
            \lim_{n \to \infty} \sup_{y \in \mathbb{R}^3} \int_{B_R(y)} (u_n^2 + v_n^2)\,dx = 0.
        \end{align*}
        By Lemma \ref{Lions-th} we see that $u_n \to 0$, $v_n \to 0$ in $L^s(\mathbb{R}^3)$, for any $s\in (2,6)$. Since $(u_n, v_n) \in \mathcal{M}$, using \eqref{eq:estimate-H1-on-M}, Lemma \ref{Lem:positive on NP} and the Hardy--Littlewood--Sobolev inequality(Lemma \ref{H-L-S-ineq} and \eqref{eq:convolution-term-est}), we get 
        \begin{align}\label{eq:pr_of_Thm A}
            0 <2\left(1 - \delta\right)\underline{C}^2 &\le \frac{3}{2}\left(1 - \delta\right)\|(u_n, v_n)\|^2 \\
            &\le \frac{p + \alpha+3}{p}\mu\int_{\mathbb{R}^3}(I_{\alpha}*|u_n|^p)|u_n|^pdx + \frac{q +\alpha+ 3}{q}\nu\int_{\mathbb{R}^3}(I_{\alpha}*|v_n|^q)|v_n|^qdx \\
            &\le \frac{p+\alpha+3}{p}\mu C_{\mathrm{HLS}}(3,\alpha, {\textstyle\frac{6p}{3+\alpha}})\|u_n\|_{\frac{6p}{3+\alpha}}^{2p}+\frac{q+\alpha+3}{q}\nu C_{\mathrm{HLS}}(3,\alpha,{\textstyle\frac{6q}{3+\alpha}})\|v_n\|_{\frac{6q}{3+\alpha}}^{2q}\\
            &\to 0 \quad (n \to \infty),
        \end{align}
        which leads to contradiction. Therefore, \eqref{eq:concentration compactness} holds and there exists $\{y_n\} \subset \mathbb{R}^3$ such that
        \begin{align}\label{eq:concentration_compactness_2}
            \int_{B_R(y_n)} (u_n^2 + v_n^2)\,dx > \xi.
        \end{align}
        Let $\widetilde{u}_n(x) \coloneqq u_n(x + y_n)$ and $\widetilde{v}_n(x) \coloneqq v_n(x + y_n)$. By \eqref{eq:concentration_compactness_2} we have
                \begin{align*}
            \int_{B_R(0)} (\widetilde{u}_n^2 + \widetilde{v}_n^2)\,dx > \xi.
        \end{align*}
        Since 
        \begin{align*}
       \|\widetilde{u}_n\|_2^2 = \|u_n\|_2^2,\ \|\widetilde{v}_n\|_2^2 = \|v_n\|_2^2,\ 
       \|\nabla\widetilde{u}_n\|_2^2=\|\nabla u\|_2^2,\ \|\nabla\widetilde{v}_n\|_2^2=\|\nabla v_n\|_2^2
       \end{align*}
       and 
       \begin{align*}
       &\int_{\mathbb{R}^3}(I_{\alpha}*|\widetilde{u}_n|^p)|\widetilde{u}_n|^pdx=\int_{\mathbb{R}^3}(I_{\alpha}*|u_n|^p)|u_n|^pdx,\\
       &\int_{\mathbb{R}^3}(I_{\alpha}*|\widetilde{v}_n|^q)|\widetilde{v}_n|^qdx=\int_{\mathbb{R}^3}(I_{\alpha}*|v_n|^p)|v_n|^pdx
       \end{align*}
       we have
        \begin{align}
            I(\widetilde{u}_n, \widetilde{v}_n) \to m, \quad J(\widetilde{u}_n, \widetilde{v}_n) = 0.   \label{eq:it is minimizing seq}
        \end{align}
        Therefore, passing to a subsequence if necessary, we may assume that there exists $(u, v) \in H \setminus \{(0, 0)\}$ such that
        \begin{empheq}[left = \empheqlbrace]{align}
            \begin{split}
                & (\widetilde{u}_n, \widetilde{v}_n) \rightharpoonup (u, v) \text{ weakly in } H,   \label{eq:w-limit} \\
                & (\widetilde{u}_n,\widetilde{v}_n)\rightharpoonup (u,v)\ \text{weakly in}\ D^{1,2}(\mathbb{R}^3)\times D^{1,2}(\mathbb{R}^3), \\
                & (\widetilde{u}_n, \widetilde{v}_n) \to (u, v) \text{ strongly in } L_{\mathrm{loc}}^s(\mathbb{R}^3) \times L_{\mathrm{loc}}^s(\mathbb{R}^3) \text{ for all } s \in [2, 6) \\
                & (\widetilde{u}_n, \widetilde{v}_n) \to (u, v) \text{ almost everywhere in } \mathbb{R}^3.
            \end{split}
        \end{empheq}
        Let $\omega_n \coloneqq \widetilde{u}_n - u$ and $\sigma_n \coloneqq \widetilde{v}_n - v$. Then \eqref{eq:w-limit} and Lemma \ref{Lem:weak limit identity} yield
        \begin{align}\label{eq:Identity of Psi and weak limit}
            \Phi(\widetilde{u}_n, \widetilde{v}_n) = \Phi(u, v) + \Phi(\omega_n, \sigma_n) + o(1)   
        \end{align}
        and
        \begin{align}\label{eq:Identity of J and weak limit}
            J(\widetilde{u}_n, \widetilde{v}_n) = J(u, v) + J(\omega_n, \sigma_n) + 4(b_1\|\nabla u\|_2^2\|\nabla \omega_n\|_2^2 + b_2\|\nabla v\|_2^2\|\nabla \sigma_n\|_2^2) + o(1). 
        \end{align}
        By 
        \eqref{eq:it is minimizing seq}, \eqref{eq:Identity of Psi and weak limit} and \eqref{eq:Identity of J and weak limit}, we have
        \begin{align}\label{eq:Identity of Psi and J}
                \Phi(\omega_n, \sigma_n) = m - \Phi(u, v) + o(1), \quad J(\omega_n, \sigma_n) \le -J(u, v) + o(1). 
        \end{align}
        If there exist $\{\omega_{n_j}\} \subset \{\omega_n\}$ and $\{\sigma_{n_j}\} \subset \{\sigma_n\}$ such that $\omega_{n_j} = 0$ and $\sigma_{n_j} = 0$ for all $j \in \mathbb{N}$, then we heve
        \begin{align*}
                \Phi(u, v) = m\ \ \text{and}\ \ J(u, v) = 0,
        \end{align*}
        which implies the conclusion of Lemma \ref{Lem:m is achieved}. Next, we assume that $(\omega_n, \sigma_n) \neq (0, 0)$ for all large $n$. By Lemma \ref{lem:tuv_in_M} for each such $n$, there exists a unique $t_n > 0$ such that $((\omega_n)^{t_n}, (\sigma_n)^{t_n}) \in \mathcal{M}$. 
        
        Now we prove the following claim.
        \begin{Claim}
            $J(u, v) \le 0$.
        \end{Claim}
        If $J(u, v) > 0$, then \eqref{eq:Identity of Psi and J} implies $J(\omega_n, \sigma_n) < 0$ for large $n$. Using \eqref{eq:energy}, \eqref{eq:NP functional}, \eqref{eq:energy-ineq-02}, \eqref{eq:functional Psi} and \eqref{eq:Identity of Psi and J}, we obtain
        \begin{align}
            \begin{split}
                m - \Phi(u, v) + o(1) &= \Phi(\omega_n, \sigma_n) \label{eq:Identity of Psi and m}\\
                &= I(\omega_n, \sigma_n) - \frac{1}{8}J(\omega_n, \sigma_n) \\
                &\ge I((\omega_n)^{t_n}, (\sigma_n)^{t_n}) - \frac{t_n^8}{8}J(\omega_n, \sigma_n) \\
                &\ge m,
            \end{split}
        \end{align}
        which implies 
        \begin{align*}
                0\ge \Phi(u, v) \ge \frac{1}{4}(a_1\|\nabla u\|_2^2+a_2\|\nabla v\|_2^2)
        \end{align*}
        and $u=v=0$ almost everywhere in $\mathbb{R}^3$. This is a contradiction to $(u,v)\ne (0,0)$.  The claim has been proved. 
        
        From \eqref{eq:functional Psi} and \eqref{eq:it is minimizing seq}, we obtain the following estimate 
        \begin{align*}
            m &= \lim_{n \to \infty} \left[I(\widetilde{u}_n, \widetilde{v}_n) - \frac{1}{8}J(\widetilde{u}_n, \widetilde{v}_n)\right] \\
            &= \lim_{n \to \infty} \left[\frac{1}{4}(a_1\|\nabla \widetilde{u}_n\|_2^2 + a_2\|\nabla \widetilde{v}_n\|_2^2)  \right.\\
            &\quad \left. + \frac{p+\alpha - 1}{8p}\mu \int_{\mathbb{R}^3}(I_{\alpha}*|\widetilde{u}_n|^p)|\widetilde{u}_n|^pdx + \frac{q +\alpha- 1}{8q}\nu\int_{\mathbb{R}^3}(I_{\alpha}*|\widetilde{v}_n|^q)|\widetilde{v}_n|_q^qdx \right] 
        \end{align*}
        By the weak lower semicontinuity of norms of $D^{1,2}(\mathbb{R}^3)$ and Lemma \ref{lem:nonlocal-B-L}, we have
        \begin{align}\label{ineq:m>I-J/8}
        \begin{split}
            m &\ge \frac{1}{4}(a_1\|\nabla u\|_2^2 + a_2\|\nabla v\|_2^2) \\
            &\quad + \frac{p +\alpha- 1}{8p}\mu\int_{\mathbb{R}^3}(I_{\alpha}*|u|^p)|u|^pdx + \frac{q +\alpha- 1}{8q}\nu\int_{\mathbb{R}^3}(I_{\alpha}*|v|^q)|v|^qdx  \\
            &= I(u, v) - \frac{1}{8}J(u, v). 
\end{split}
    \end{align}
    On the other hand, since $(u, v) \neq (0, 0)$, there exists $t > 0$ such that $(u^t, v^t) \in \mathcal{M}$. Then by Lemma \ref{eq:energy-ineq-02} we get
    \begin{align}\label{ineq:I-J/8>m}
    I(u,v)-\frac{1}{8}J(u,v)\ge I(u^t, v^t) - \frac{t^8}{8}J(u, v) \ge m.
    \end{align}
    Combining \eqref{ineq:m>I-J/8} and \eqref{ineq:I-J/8>m} we can obtain
    \begin{align}\label{m=I-J/8}
   I(u,v)-\frac{1}{8}J(u,v)=I(u^t, v^t)-\frac{t^8}{8}J(u,v)=m.
    \end{align}
    From the above identities we can conclude that 
        \begin{align*}
            J(u, v) = 0\ \ \text{and}\ \ I(u, v) = m
        \end{align*}
holds. In fact if $J(u,v)<0$, then $I(u^t,v^t)=m+(t^8/8)J(u,v)<m$, which is contradiction to $(u^t, v^t)\in\mathcal{M}$ and the definition of $m$. Therefore, we conclude that $(u,v)\in \mathcal{M}$ and that $m$ is indeed attained at $(u,v)$.
\end{proof}

In the next lemma we prove the minimizer is actually a critical point of $I$ on $H$. We use the deformation lemma and degree theory.
   \begin{Lem}     \label{Lem:ground state}
        Assume that \ref{assumption:potential} holds. If $(u, v) \in \mathcal{M}$ satisfies $m = I(u, v)$ with $m$ defined in \eqref{eq:Energy level}, then $(u, v)$ is a critical point of $I$.
    \end{Lem}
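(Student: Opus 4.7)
The plan is a contradiction argument combining the quantitative deformation lemma with an intermediate value argument along the scaled fibration $t \mapsto (u^t, v^t)$. Suppose, contrary to the conclusion, that $I'(u,v) \ne 0$. By continuity of $I'$, there exist $\delta, \rho > 0$ with $\delta < \|(u,v)\|/3$ such that $\|I'(w)\|_{H^*} \ge \rho$ for every $w \in B_{3\delta}(u,v) \subset H$.

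Because $(u,v) \in \mathcal{M}$, the scalar function $\zeta(t) := I(u^t, v^t)$ satisfies $\zeta'(t) = t^{-1}J(u^t, v^t)$, and by Lemma \ref{Lem:mpstructure} together with Lemma \ref{lem:tuv_in_M} it has a unique critical point at $t = 1$ with maximum value $m$. Consequently $J(u^t, v^t) > 0$ on $(0,1)$, $J(u^t, v^t) < 0$ on $(1,\infty)$, and $\zeta(t) < m$ strictly for $t \ne 1$. Choose $t_0 \in (0,1)$ sufficiently small and $t_1 > 1$ sufficiently large so that $(u^{t_i}, v^{t_i}) \notin \overline{B_{2\delta}(u,v)}$ for $i = 0,1$; this is possible because scaling drives $\|(u^t, v^t)\|$ away from $\|(u,v)\|$ at both extremes. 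Set
\[
\kappa_0 := m - \max\bigl\{\zeta(t) : t \in [t_0, t_1] \text{ and } (u^t, v^t) \notin B_\delta(u,v)\bigr\},
\]
which is strictly positive by compactness and the strict maximum property of $\zeta$.

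Next, apply the quantitative deformation lemma (for instance \cite[Lemma 2.3]{Willem}) with $\varepsilon \in (0, \min\{\kappa_0, \rho\delta/8\})$ to obtain a continuous map $\eta : [0,1] \times H \to H$ such that $\eta(1,\cdot)$ is a homeomorphism of $H$, $\eta(1,w) = w$ whenever $|I(w) - m| \ge 2\varepsilon$ or $w \notin B_{2\delta}(u,v)$, $I(\eta(1,w)) \le I(w)$ for every $w \in H$, and $I(\eta(1,w)) \le m - \varepsilon$ whenever $I(w) \le m + \varepsilon$ and $w \in B_\delta(u,v)$. Define the deformed path $\gamma(t) := \eta(1, u^t, v^t)$ for $t \in [t_0, t_1]$. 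The conditions on $t_0, t_1$ and the first property of $\eta$ force $\gamma(t_i) = (u^{t_i}, v^{t_i})$, so $J(\gamma(t_0)) > 0$ and $J(\gamma(t_1)) < 0$. Splitting $[t_0, t_1]$ according to whether $(u^t, v^t)$ lies in $B_\delta(u,v)$ and using the monotonicity and descent properties of $\eta$ together with $\varepsilon < \kappa_0$, one obtains $I(\gamma(t)) < m$ for every $t \in [t_0, t_1]$.

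Since $\delta < \|(u,v)\|/3$ places $(0,0)$ outside $B_{2\delta}(u,v)$, one has $\eta(1,(0,0)) = (0,0)$; because $\eta(1,\cdot)$ is a homeomorphism and $(u^t, v^t) \ne (0,0)$ for $t \in [t_0, t_1]$, it follows that $\gamma(t) \ne (0,0)$ throughout. The intermediate value theorem applied to the continuous function $t \mapsto J(\gamma(t))$ then yields $t^* \in (t_0, t_1)$ with $J(\gamma(t^*)) = 0$, i.e.\ $\gamma(t^*) \in \mathcal{M}$, while $I(\gamma(t^*)) < m$, contradicting the definition of $m$. Hence $I'(u,v) = 0$. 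The main obstacle is the final topological step: preserving the sign change of $J$ at the endpoints (secured by freezing the deformation outside $B_{2\delta}(u,v)$) while simultaneously ruling out $\gamma(t^*) = (0,0)$ (secured by choosing $\delta$ a fraction of $\|(u,v)\|$ so that the deformation region stays away from the origin).
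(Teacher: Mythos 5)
Your proposal is correct and follows the same overall strategy as the paper: argue by contradiction, apply Willem's quantitative deformation lemma in a ball around $(u,v)$ where $\|I'\|\ge\rho$, push the dilation fiber $t\mapsto(u^t,v^t)$ below the level $m$, and then detect a point of the deformed path on $\mathcal{M}$, contradicting the definition of $m$. The execution differs in the last step and in the bookkeeping, and your route is in places simpler: you freeze the endpoints by sending $t_0\to0$, $t_1\to\infty$ so that $(u^{t_i},v^{t_i})$ leaves the deformation region (in the paper the interval is the fixed $[0.5,1.5]$ and the endpoints are frozen through the energy drop $I(u^t,v^t)\le m-\frac{(1-t)^2}{4}(a_1\|\nabla u\|_2^2+a_2\|\nabla v\|_2^2)$ and the choice $\varepsilon\le(a_1\|\nabla u\|_2^2+a_2\|\nabla v\|_2^2)/32$), and you conclude with the intermediate value theorem applied to $t\mapsto J(\eta(1,(u^t,v^t)))$, using the sign of $J(u^t,v^t)$ on either side of $t=1$, whereas the paper computes $\Psi_0'(1)<0$ and invokes Brouwer degree with the homotopy $\Psi(t,s)=J(\eta(s,(u^t,v^t)))$; in one dimension these are equivalent, and your version also checks explicitly that $\gamma(t^*)\neq(0,0)$ (via injectivity of $\eta(1,\cdot)$ and $\delta<\|(u,v)\|/3$), a point the paper passes over. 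Two details you should make explicit. First, both your claim $\kappa_0>0$ (closedness/compactness of $\{t\in[t_0,t_1]:(u^t,v^t)\notin B_\delta(u,v)\}$, which must avoid $t=1$) and the continuity of $t\mapsto J(\eta(1,(u^t,v^t)))$ rest on the continuity of $t\mapsto(u^t,v^t)$ in $H$; this is true but not free, and the paper devotes the estimates \eqref{eq:approximation}--\eqref{eq:Identity for t_n is less than varepsilon_0} (a density argument with $C_0$ approximations) to proving exactly this at $t=1$, so you should either prove it on $[t_0,t_1]$ by the same argument or cite it. Second, the regions in Willem's lemma need minor adjustment: with $S=B_\delta(u,v)$ and parameter $\delta$ the map is the identity only outside $S_{2\delta}=B_{3\delta}$, not outside $B_{2\delta}$, so either rescale the parameters (e.g.\ parameter $\delta/2$ with $\varepsilon\le\rho\delta/16$) or place $t_0,t_1$ outside $B_{3\delta}$ (alternatively note that for $t_0$ small and $t_1$ large one has $I(u^{t_i},v^{t_i})<m-2\varepsilon$, so the endpoints are frozen by the level condition in any case). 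Neither point affects the validity of your approach.
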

    \begin{proof} Although this lemma can be proved by the same argument as in \cite{Tatsuya} we give the proof for reader's convenience. 
    
        Assume that $I^{\prime}(u, v) \neq 0$. Then there exist $\tau > 0$ and $\rho > 0$ such that
        \begin{align*}
            \|(\widetilde{u}, \widetilde{v}) - (u, v)\| \le 3\tau\ \ \Rightarrow\ \  \|I^{\prime}(\widetilde{u}, \widetilde{v})\|_{H^*} \ge \rho.
        \end{align*}
        We first claim that
        \begin{align}\label{eq:limit for t}
            \lim_{t \to 1} \|(u^t, v^t) - (u, v)\| = 0.   
        \end{align}
        Arguing by contradiction, suppose that there exist $\varepsilon_0 > 0$ and $\{t_n\} \subset \mathbb{R}$ such that
        \begin{align}
            \lim_{n \to \infty} t_n = 1\ \ \ \text{and}\ \ \ \|(u^{t_n}, v^{t_n}) - (u, v)\| \ge \varepsilon_0.     \label{eq:Identity for t_n}
        \end{align}
        Since $C_0^{\infty}(\mathbb{R}^3)$ is dense in $L^2(\mathbb{R}^3)$ and $C_0^{\infty}(\mathbb{R}^3) \times C_0^{\infty}(\mathbb{R}^3)$ is dense in $H$, there exist $U_1, U_2 \in C_0(\mathbb{R}^3, \mathbb{R}^3)$ and $\varphi_1, \varphi_2 \in C_0(\mathbb{R}^3)$ such that
        \begin{align}
            \begin{split}
                & a_1\|\nabla u - U_1\|_2^2 < \frac{\varepsilon_0}{60}, \quad a_2\|\nabla v - U_2\|_2^2 < \frac{\varepsilon_0}{60},    \label{eq:approximation} \\
                & V_1\|u - \varphi_1\|_2^2 < \frac{\varepsilon_0}{40}, \quad V_2\|v - \varphi_2\|_2^2 < \frac{\varepsilon_0}{40}.
            \end{split}
        \end{align}
        By \eqref{eq:Identity for t_n} and \eqref{eq:approximation}, we have
        \begin{align}
            \begin{split}
                a_1\|\nabla (u^{t_n}) - \nabla u\|_2^2 &= a_1\int_{\mathbb{R}^3} |\nabla (u^{t_n}) - \nabla u|^2\,dx     \label{eq:L^2 norm of gradient} \\
                &\le 2a_1\left(\int_{\mathbb{R}^3} \left|\nabla u\left(\frac{x}{t_n}\right) - U_1\right|^2\,dx + \int_{\mathbb{R}^3} |U_1 - \nabla u|^2\,dx\right) \\
                &\le 4a_1\left(\int_{\mathbb{R}^3} \left|\nabla u\left(\frac{x}{t_n}\right) - U_1\left(\frac{x}{t_n}\right)\right|^2\,dx + \int_{\mathbb{R}^3} \left|U_1\left(\frac{x}{t_n}\right) - U_1(x)\right|^2\,dx \right) \\
                &\quad + 2a_1\int_{\mathbb{R}^3} |U_1 - \nabla u|^2\,dx \\
                &= 4a_1t_n^3\int_{\mathbb{R}^3} |\nabla u(y) - U_1(y)|^2\,dy + 4a_1\int_{\mathbb{R}^3} \left|U_1\left(\frac{x}{t_n}\right) - U_1(x)\right|^2\,dx \\
                &\quad + 2a_1\int_{\mathbb{R}^3} |U_1 - \nabla u|^2\,dx \\
                &\le \frac{a_1(1 + 2t_n^3)}{30}\varepsilon_0 + 4\int_{\mathbb{R}^3} \left|U_1\left(\frac{x}{t_n}\right) - U_1(x)\right|^2\,dx \\
                &= \frac{1}{10}\varepsilon_0 + o(1),
            \end{split}
        \end{align}
        and
        \begin{align}
            \begin{split}
                V_1\|u^{t_n} - u\|_2^2 &= V_1\int_{\mathbb{R}^3} |u^{t_n} - u|^2\,dx      \label{eq:L^2 norm of u}\\
                &\le 2V_1\left(\int_{\mathbb{R}^3} \left|t_nu\left(\frac{x}{t_n}\right) - \varphi_1\right|^2\,dx + \int_{\mathbb{R}^3} |\varphi_1 - u|^2\,dx\right) \\
                &\le 6V_1\left(\int_{\mathbb{R}^3} \left|t_nu\left(\frac{x}{t_n}\right) - t_n\varphi_1\left(\frac{x}{t_n}\right)\right|^2\,dx + \int_{\mathbb{R}^3} \left|t_n\varphi_1\left(\frac{x}{t_n}\right) - t_n\varphi_1(x)\right|^2\,dx \right.\\
                &\quad \left. + \int_{\mathbb{R}^3} |t_n\varphi_1(x) - \varphi_1(x)|^2\,dx\right) + 2V_1\int_{\mathbb{R}^3} |\varphi_1 - u|^2\,dx \\
                &= 6V_1t_n^5\int_{\mathbb{R}^3} |u(y) - \varphi_1(y)|^2\,dy + 6V_1^2t_n^2\int_{\mathbb{R}^3} \left|\varphi_1\left(\frac{x}{t_n}\right) - \varphi_1(x)\right|^2\,dx \\
                &\quad + 6V_1|t_n - 1|^2\int_{\mathbb{R}^3} |\varphi_1|^2\,dx + 2V_1\int_{\mathbb{R}^3} |\varphi_1 - u|^2\,dx \\
                &\le \frac{V_1(1 + 3t_n^5)}{20}\varepsilon_0 + 6V_1t_n^2\int_{\mathbb{R}^3} \left|\varphi_1\left(\frac{x}{t_n}\right) - \varphi_1(x)\right|^2\,dx + 6V_1|t_n - 1|^2\int_{\mathbb{R}^3} |\varphi_1|^2\,dx \\
                &= \frac{1}{5}\varepsilon_0 + o(1).
            \end{split}
        \end{align}
        Combining \eqref{eq:L^2 norm of gradient} with \eqref{eq:L^2 norm of u}, we have
        \begin{align*}
            \|u^{t_n} - u\|_{a_1, V_1}^2 &= a_1|\nabla (u^{t_n}) - \nabla u|_2^2 + V_1|u^{t_n} - u|_2^2 \\
            &\le \frac{3}{10}\varepsilon_0 + o(1).
        \end{align*}
        Similarly, we get $\|v^{t_n} - v\|_{a_2, V_2}^2 \le \frac{3}{10}\varepsilon_0 + o(1)$. Hence we obtain
        \begin{align}
            \|(u^{t_n}, v^{t_n}) - (u, v)\| \le \frac{3}{5}\varepsilon_0 + o(1).  \label{eq:Identity for t_n is less than varepsilon_0}
        \end{align}
        But \eqref{eq:Identity for t_n is less than varepsilon_0} contradicts with \eqref{eq:Identity for t_n}. Therefore, \eqref{eq:limit for t} holds. Thus, there exist $\tau_1 > 0$ such that
        \begin{align}
            |t - 1| < \tau_1\ \ \Rightarrow\ \  \|(u^t, v^t) - (u, v)\| < \tau.  \label{eq:eq:(t_nu_(t_n), t_nv_(t_n)) - (u, v) is bounded}
        \end{align}
        By \eqref{eq:energy-ineq-02}, we have
        \begin{align}\label{eq:Evaluate I(tu_t, tv_t) from above}
            \begin{split}
                I(u^t, v^t) &\le I(u, v) - \frac{(1 - t^4)^2}{4}(a_1\|\nabla u\|_2^2 + a_2\|\nabla v\|_2^2)    \\
                &= m - \frac{(1 - t^4)^2}{4}(a_1\|\nabla u\|_2^2 + a_2\|\nabla v\|_2^2), \quad t > 0.
            \end{split}
        \end{align}
        By \eqref{eq:Evaluate I(tu_t, tv_t) from above} and
        \begin{align*}
        (1-t^4)^2=(1-t)^2(1+t)^2(1+t^2)^2\ge (1-t)^2\ \ \text{for}\ \ t>0
        \end{align*}
        we have
        \begin{align}\label{eq:Evaluate I(tu_t, tv_t) from above2}
  I(u^t, v^t)\le m-\frac{(1-t)^2}{4}(a_1\|\nabla u\|_2^2 + a_2\|\nabla v\|_2^2)\ \ \text{for}\ \ t>0.
        \end{align}
        Let $\varepsilon \coloneqq \min \{(a_1\|\nabla u\|_2^2 + a_2\|\nabla v\|_2^2)/32, 1, \rho\tau/8\}$ and $\mathcal{B}_{\tau}((u, v))$, the ball in $H$ with radius $\tau$ and centerd at $(u,v)$. Then \cite[Lemma 2.3]{Willem} yields a deformation $\eta \in C([0, 1] \times H, H)$ such that
        \begin{enumerate}
            \renewcommand{\labelenumi}{(\roman{enumi})}
            \item $\eta(s, (u, v)) = (u, v)$ if $s = 0$ or $I(u, v) < m - 2\varepsilon$ or $I(u, v) > m + 2\varepsilon$;
            \item $\eta(1, I^{m + \varepsilon} \cap \mathcal{B}_{\tau}((u, v))) \subset I^{m - \varepsilon}$;
            \item $I(\eta(1, (\varphi, \psi))) \le I(\varphi, \psi) \quad \text{for}\ (\varphi, \psi) \in H$;
            \item $\eta(1, (u, v))$ is a homeomorphism of $H$,
        \end{enumerate}
        where $I^c = \{(u, v) \in H \mid I(u, v) \le c\}$ for $c \in \mathbb{R}$. By Corollary \ref{Cor:maximum}, $I(u^t, v^t) \le I(u, v) = m$ for $t \ge 0$, then it follows from \eqref{eq:eq:(t_nu_(t_n), t_nv_(t_n)) - (u, v) is bounded} and (ii) that
        \begin{align}
            I(\eta(1, (u^t, v^t))) \le m - \varepsilon \text{ for } t \ge 0 \text{ with } |t - 1| < \tau_1   \label{eq:I(eta) is less than m - epsilon}.
        \end{align}
        On the other hand, by (iii), \eqref{eq:Evaluate I(tu_t, tv_t) from above2} and 
        we have
    \begin{align}\label{eq:Evaluate I(eta) from above}
            \begin{split}
                I(\eta(1, (u^t, v^t))) &\le I(u^t, v^t)     \\
                &\le m - \frac{\tau_1^2}{4}(a_1\|\nabla u\|_2^2 + a_2\|\nabla v\|_2^2), \quad \text{for}\ \ |t - 1| \ge \tau_1\ \ \text{with}\ \ t\ge 0.
            \end{split}
        \end{align}
        Combining \eqref{eq:I(eta) is less than m - epsilon} and \eqref{eq:Evaluate I(eta) from above}, we have
        \begin{align}
            \max_{t \in [0.5, 1.5]} I(\eta(1, (u^t, v^t))) < m.   \label{eq:I < m}
        \end{align}
        We claim that there exists $t \in [0.5, 1.5]$ such that $\eta(1, (tu_t, tv_t)) \cap \mathcal{M} \neq \emptyset$. Define
        \begin{align*}
            \Psi_0(t) \coloneqq J(u^t, v^t), \quad \Psi_1(t) \coloneqq J(\eta(1, (u^t, v^t)))\ \text{ for }\ t > 0.
        \end{align*}
        By a direct computation we have
        \begin{align*}
            \Psi_0(t)&=J(u^t, v^t) \\
                     &=2t^4(a_1\|\nabla u\|_2^2 + a_2\|\nabla v\|_2^2) + 4t^8\left(V_1\|u\|_2^2 + V_2\|v\|_2^2\right)+ 2t^8(b_1\|\nabla u\|_2^4 + b_2\|\nabla v\|_2^4) \\
            &\quad - \frac{p + \alpha+3}{p}\mu t^{2(p+\alpha+3)}\int_{\mathbb{R}^3}(I_{\alpha}*|u|^p)|u|^pdx - \frac{q +\alpha+ 3}{q}\nu t^{2(q+\alpha+3)}\int_{\mathbb{R}^3}(I_{\alpha}*|v|^q)|v|^qdx  \\
            &\quad- 8\lambda t^8\int_{\mathbb{R}^3} uv\,dx.
        \end{align*}
        Hence we obtain
        \begin{align*}
            \Psi_0'(t) &=8t^3(a_1\|\nabla u\|_2^2 + a_2\|\nabla v\|_2^2) + 32t^7\left(V_1\|u\|_2^2 + V_2\|v\|_2^2\right)+ 16t^7(b_1\|\nabla u\|_2^4 + b_2\|\nabla v\|_2^4) \\
            &\quad - \frac{2(p + \alpha+3)^2}{p}\mu t^{2(p+\alpha+3)-1}\int_{\mathbb{R}^3}(I_{\alpha}*|u|^p)|u|^pdx \\
            &\quad - \frac{2(q +\alpha+ 3)^2}{q}\nu t^{2(q+\alpha+3)-1}\int_{\mathbb{R}^3}(I_{\alpha}*|v|^q)|v|^qdx- 64\lambda t^7\int_{\mathbb{R}^3} uv\,dx.
        \end{align*}
        By using $\Phi_0(1) = J(u,v) = 0$, we have
        \begin{align*}
            \Psi_0'(1)&=\Psi_0'(1)-8\Psi_0(1) \\
            &=-8(a_1\|\nabla u\|_2^2 + a_2\|\nabla v\|_2^2)-\frac{(p+\alpha+3)(p+\alpha-1)}{p}\mu\int_{\mathbb{R}^3}(I_{\alpha}*|u|^p)|u|^pdx \\
            &\quad \quad -\frac{(q+\alpha+3)(q+\alpha-1)}{q}\nu\int_{\mathbb{R}^3}(I_{\alpha}*|v|^q)|v|^qdx<0.        \end{align*}

        Since $(u, v) \in \mathcal{M}, \Psi_0(t) = 0$ holds only if $t = 1$ by \eqref{eq:NPmanifold}. We see that $\deg(\Psi_0, (0.5, 1.5), 0) = -1$. If $t = 0.5$ or $t = 1.5$, then it follows from \eqref{eq:Evaluate I(tu_t, tv_t) from above2}, (i) and the choice of $\varepsilon$ that
        \begin{align*}
            \eta(1, (u^t, v^t)) = (u^t, v^t)\ \ \text{ for }\ \ t = 0.5 \text{ or } t = 1.5.
        \end{align*}
        Noting that
        \begin{align*}
            & \Psi_0(t) = J(u^t, v^t) = J(\eta(0, (u^t, v^t))), \\
            & \Psi_1(t) = J(\eta(1, (u^t, v^t))),
        \end{align*}
        we define
        \begin{align*}
            \Psi(t, s) \coloneqq J(\eta(s, (u^t, v^t))), \quad (t, s) \in [0.5, 1.5] \times [0, 1].
        \end{align*}
        Then we can see that $\Psi(t, s)$ is a homotopy between $\Psi_0(t)$ and $\Psi_1(t)$. Hence, by the fundamental property the degree (see for example \cite{degree theory}), we see that
        \begin{align*}
            \deg(\Psi_1, (0.5, 1.5), 0) = \deg(\Psi_0, (0.5, 1.5), 0) = -1.
        \end{align*}
        This means that there exists $\bar{t} \in (0.5, 1.5)$ such that $\Psi_1(\bar{t}) = 0$. Therefore we obtain $\eta(1, (u^{\bar{t}}, v^{\bar{t}})) \in \mathcal{M}$. This and \eqref{eq:I < m} contradict the definition of $m$.
    \end{proof}
    Now we are in the position to prove Theorem \ref{Th:non-critical}. 
    \begin{proof}[Proof of Theorem \ref{Th:non-critical}]
        In the preceding lemma we have obtained a ground state solution $(u, v) \in H$ to \eqref{eq:NKC}. In order to get a nonnegative ground state solution, using Lemma \ref{lem:tuv_in_M}, there exists $t_0 > 0$ such that $(|u|^{t_0}, |v|^{t_0}) \in \mathcal{M}$. Thus, we see that
        \begin{align*}
            I(|u|^{t_0}, |v|^{t_0}) \le I(u^{t_0}, v^{t_0}) \le I(u, v) = m,
        \end{align*}
        which means that $(|u|^{t_0}, |v|^{t_0})$ is also a minimizer of $m$ and $(|u|^{t_0}, |v|^{t_0})$ is a nonnegative ground state solution to \eqref{eq:NKC}. Since $(|u|^{t_0}, |v|^{t_0}) \neq (0, 0)$, we may assume without loss of general that $|u|^{t_0} \neq 0$. We claim that $|v|^{t_0} \neq 0$. In fact, since $(|u|^{t_0}, |v|^{t_0})$ is a critical point of $I$, if $|v|^{t_0} = 0$, then for any $\varphi \in C_0^{\infty}(\mathbb{R}^3)$  we have
        \begin{align*}
            0 = \langle I^{\prime}(|u|^{t_0}, |v|^{t_0}), (0, \varphi^{t_0}) \rangle = -\lambda t_0^8 \int_{\mathbb{R}^3} |u|\varphi\,dx.
        \end{align*}
        Since $\lambda, t_0 > 0$, we have $|u| = 0$, which is a contradiction. Hence $|v|^{t_0} \neq 0$. By the strong maximum principle in each equation of \eqref{eq:NKC}, we see that $(|u|^{t_0}, |v|^{t_0})$ is positive.
        The proof of Theorem \ref{Th:non-critical} has been completed.
    \end{proof}

    \section{Proof of Theorem \ref{Th:critical}}
  In this section, we are concerned with the critical cases, that is, the upper half critical case $3 < p < q = 3+\alpha$ and the lower half critical case $1+\frac{\alpha}{3}=p<q<3+\alpha$. 

  To stress the dependence of $m$ on $\mu$ and $\nu$ we denote 
  \begin{align*}
  m_{\mu, \nu}=\inf_{(u,v)\in\mathcal{M}}I(u,v).
  \end{align*}
We begin with a lemma about asymptotic behaviors of $m_{\mu, \nu}$ as $\mu\to\infty$ or $\nu\to\infty$.
\begin{Lem}\label{lem:critical-lemma}
Suppose that $\frac{3 + \alpha}{3} \leq p \leq q \leq 3 + \alpha$. Then the following statements hold:
\begin{enumerate}
\item[\textup{(1)}] For each fixed $\nu$, we have $\lim_{\mu \to \infty} m_{\mu, \nu} = 0$.
\item[\textup{(2)}] For each fixed $\mu$, we have $\lim_{\nu \to \infty} m_{\mu, \nu} = 0$.
\end{enumerate}
\end{Lem}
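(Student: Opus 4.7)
The plan is to use the variational characterization \eqref{eq:Energy level} with a single fixed test pair $(u_0, v_0)$, reducing the claim to showing that the optimal scaling parameter is forced to zero when one of the coupling constants blows up. First I would fix $(u_0, v_0) \in C_0^{\infty}(\mathbb{R}^3) \times C_0^{\infty}(\mathbb{R}^3)$ with $u_0 \not\equiv 0$ and $v_0 \not\equiv 0$. By Lemma~\ref{lem:tuv_in_M} and Corollary~\ref{Cor:maximum}, for every $\mu, \nu > 0$ there is a unique $t_{\mu,\nu} > 0$ such that $(u_0^{t_{\mu,\nu}}, v_0^{t_{\mu,\nu}}) \in \mathcal{M}$ and $I(u_0^{t}, v_0^{t})$ attains its maximum over $(0,\infty)$ there. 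Combined with \eqref{eq:Energy level} and Lemma~\ref{Lem:inf is positive} this yields
\[
0 < m_{\mu,\nu} \le I\bigl(u_0^{t_{\mu,\nu}}, v_0^{t_{\mu,\nu}}\bigr).
\]
Every power of $t$ appearing in \eqref{eq:zeta} is at least $4$, so $t \mapsto I(u_0^t, v_0^t)$ is continuous at $0$ with value $0$. Hence the lemma reduces to proving $t_{\mu,\nu} \to 0$ as $\mu \to \infty$ with $\nu$ fixed for~(1), and analogously for~(2).

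The central estimate comes from writing $J(u_0^{t}, v_0^{t}) = 0$ at $t = t_{\mu,\nu}$ as
\[
\mu\, c_p\, t^{2(p+3+\alpha)} + \nu\, c_q\, t^{2(q+3+\alpha)} = 2A\, t^4 + 8\beta\, t^8,
\]
where $A := a_1\|\nabla u_0\|_2^2 + a_2\|\nabla v_0\|_2^2 > 0$, $c_p := \tfrac{p+3+\alpha}{p}\int_{\mathbb{R}^3}(I_\alpha * |u_0|^p)|u_0|^p\,dx > 0$, $c_q > 0$ similarly, and
\[
8\beta := 4(V_1\|u_0\|_2^2 + V_2\|v_0\|_2^2) + 2(b_1\|\nabla u_0\|_2^4 + b_2\|\nabla v_0\|_2^4) - 8\lambda \int_{\mathbb{R}^3} u_0 v_0\,dx.
\]
Hypothesis~\ref{assumption:potential} combined with the pointwise inequality $2\sqrt{V_1 V_2}\,|u_0 v_0| \le V_1 u_0^2 + V_2 v_0^2$, exactly as in~\eqref{eq:-lambda}, ensures $\beta \ge 0$.

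For part~(1), discarding the nonnegative $\nu$-term on the left and dividing by $t^4$ yields
\[
\mu\, c_p\, t_{\mu,\nu}^{2(p+\alpha+1)} \le 2A + 8\beta\, t_{\mu,\nu}^{4}.
\]
Since $p \ge \tfrac{3+\alpha}{3} > 1$, the exponent $2(p+\alpha+1)$ strictly exceeds $4$; hence along any $\mu_n \to \infty$ with $t_{\mu_n,\nu}$ unbounded the left-hand side would dominate the right, a contradiction, so $\{t_{\mu,\nu}\}$ remains bounded as $\mu \to \infty$. With that in hand the right-hand side stays bounded while $\mu c_p \to \infty$ on the left, forcing $t_{\mu,\nu}^{2(p+\alpha+1)} \to 0$ and hence $t_{\mu,\nu} \to 0$. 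The continuity remark then gives $\lim_{\mu \to \infty} m_{\mu,\nu} = 0$. Part~(2) follows by the symmetric argument with the roles of $(\mu, p, u_0)$ and $(\nu, q, v_0)$ exchanged, using $q \ge p > 1$ and $c_q > 0$.

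The only delicate point I anticipate is guaranteeing $\beta \ge 0$: without control of the sign of the $t^8$ coefficient of $J$, the elementary divide-by-$t^4$ estimate would fail to produce the desired contradiction. Hypothesis~\ref{assumption:potential} is precisely what makes this step go through, and once $\beta \ge 0$ is in hand the rest of the argument is essentially bookkeeping of exponents.
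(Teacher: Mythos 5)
Your proposal is correct and takes essentially the same approach as the paper: scale a fixed test pair, use $J(u_0^t,v_0^t)=0$ on the Nehari--Pohozaev manifold to force the scaling parameter $t_{\mu,\nu}\to 0$ as $\mu\to\infty$ (resp. $\nu\to\infty$), and then bound $m_{\mu,\nu}$ by the energy of the scaled pair (the paper divides by $t^8$ and keeps the $\nu$-term for boundedness, you divide by $t^4$ and keep the $\mu$-term, a cosmetic difference). The only step to make explicit is the final ``continuity'' claim: since the map $t\mapsto I(u_0^t,v_0^t)$ depends on $\mu,\nu$, one should note that the $\mu$- and $\nu$-weighted Choquard terms enter $I$ with a negative sign, so that $0<m_{\mu,\nu}\le I\bigl(u_0^{t_{\mu,\nu}},v_0^{t_{\mu,\nu}}\bigr)\le C\bigl(t_{\mu,\nu}^4+t_{\mu,\nu}^8\bigr)$ with $C$ independent of $\mu,\nu$, which yields the conclusion once $t_{\mu,\nu}\to 0$ (the paper instead verifies $\mu t_\mu^{2(p+\alpha+3)}\to 0$ directly).
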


\begin{proof}
We only prove (1), since (2) can be proved in a similar manner.  
Take any $\phi \in C_0^{\infty}(\mathbb{R}^3)$ such that $\phi \geq 0$ and $\phi \not\equiv 0$.  
By Lemma~\ref{lem:tuv_in_M}, there exists a unique $t_{\mu} > 0$ such that $(\phi^{t_\mu}, \phi^{t_\mu}) \in \mathcal{M}$.  
We claim that $t_\mu \to 0$ as $\mu \to \infty$.

Since $J(\phi^{t_\mu}, \phi^{t_\mu}) = 0$, it follows that
\begin{align}\label{eq:tmu_to_0-1}
\begin{split}
& 2t_\mu^4(a_1\|\nabla \phi\|_2^2 + a_2\|\nabla \phi\|_2^2) 
+ 4t_\mu^8(V_1\|\phi\|_2^2 + V_2\|\phi\|_2^2) 
+ 2t_\mu^8(b_1\|\nabla \phi\|_2^4 + b_2\|\nabla \phi\|_2^2) \\
&\quad - \frac{p + \alpha + 3}{p}\, \mu\, t_\mu^{2(p + \alpha + 3)} \int_{\mathbb{R}^3}(I_{\alpha} * |\phi|^p)|\phi|^p \, dx \\
&\quad - \frac{q + \alpha + 3}{q}\, \nu\, t_\mu^{2(q + \alpha + 3)} \int_{\mathbb{R}^3}(I_{\alpha} * |\phi|^q)|\phi|^q \, dx 
- 8\lambda t_\mu^8 \int_{\mathbb{R}^3} \phi^2 \, dx = 0.
\end{split}
\end{align}

From this identity we deduce:
\begin{align*}
& 2t_\mu^{-4}(a_1\|\nabla \phi\|_2^2 + a_2\|\nabla \phi\|_2^2) 
+ 4\left(V_1\|\phi\|_2^2 + V_2\|\phi\|_2^2- 2\lambda \int_{\mathbb{R}^3} \phi^2 \, dx\right) \\
&\quad + 2(b_1\|\nabla \phi\|_2^4 + b_2\|\nabla \phi\|_2^4) \\
&\geq \frac{q + \alpha + 3}{q}\, \nu\, t_\mu^{2(q + \alpha - 1)} \int_{\mathbb{R}^3}(I_{\alpha} * |\phi|^q)|\phi|^q \, dx.
\end{align*}

This shows that the family $\{t_\mu\}_\mu$ of positive numbers is bounded.  
Moreover, from \eqref{eq:tmu_to_0-1} we obtain
\begin{align}\label{eq:tmu_to_0-2}
\begin{split}
\frac{1}{\mu} \bigg\{ 
& 2t_\mu^{-4}(a_1\|\nabla \phi\|_2^2 + a_2\|\nabla \phi\|_2^2) 
+ 4(V_1\|\phi\|_2^2 + V_2\|\phi\|_2^2) 
- 8\lambda \int_{\mathbb{R}^3} \phi^2 \, dx \\
& + 2(b_1\|\nabla \phi\|_2^4 + b_2\|\nabla \phi\|_2^4) \bigg\}
\geq \frac{p + \alpha + 3}{p} t_\mu^{2(p + \alpha - 1)} \int_{\mathbb{R}^3}(I_{\alpha} * |\phi|^p)|\phi|^p \, dx.
\end{split}
\end{align}

Letting $\mu \to \infty$ in this inequality, we deduce $t_\mu \to 0$ and hence $\mu t_\mu^{2(p + \alpha + 3)} \to 0$.  

Now, for each $\mu > 0$, we have
\begin{align*}
m_{\mu, \nu} &\leq I(\phi^{t_\mu}, \phi^{t_\mu}) \\
&= \frac{1}{2} t_\mu^4 (a_1\|\nabla \phi\|_2^2 + a_2\|\nabla \phi\|_2^2) 
+ \frac{1}{2} t_\mu^8 (V_1\|\phi\|_2^2 + V_2\|\phi\|_2^2) 
+ \frac{1}{4} t_\mu^8 (b_1\|\nabla \phi\|_2^4 + b_2\|\nabla \phi\|_2^2) \\
&\quad - \frac{1}{2p}(\mu t_\mu^{2(p + \alpha + 3)}) \int_{\mathbb{R}^3}(I_{\alpha} * |\phi|^p)|\phi|^p \, dx 
- \frac{\nu}{2q} t_\mu^{2(q + \alpha + 3)} \int_{\mathbb{R}^3}(I_{\alpha} * |\phi|^q)|\phi|^q \, dx \\
&\quad - \lambda t_\mu^8 \int_{\mathbb{R}^3} \phi^2 \, dx \to 0
\end{align*}
as $\mu \to \infty$. This completes the proof.
\end{proof}

\subsection{The Upper half critical Case $\bm{1+\alpha/3<p<q=3+\alpha}$}

In this subsection, we prove (1) of Theorem \ref{Th:critical}. By Lemma \ref{lem:critical-lemma} for each fixed $\nu> 0$ there exists $\mu_0=\mu_0(\nu)>0$ such that 
\begin{align}\label{eq:mu_big}
m_{\mu,\nu}<\frac{\alpha+1}{4(3+\alpha)}\nu\left(\frac{a_2}{\nu}\mathcal{S}^*\right)^{\frac{3+\alpha}{2+\alpha}}\ \ \mbox{for}\ \mu\ge\mu_0.
\end{align}
Fix $\mu\ge \mu_0$ and take a minimizing sequence $\{(u_n, v_n)\} \subset \mathcal{M}$ for $m_{\mu,\nu}$. By \eqref{eq:I is positive}, $\{(u_n, v_n)\}$ is bounded in $H$ and we may assume that $(u_n, v_n) \rightharpoonup (u, v)$ weakly in $H$.

By the Hardy--Littlewood--Sobolev inequality, there exists $A(\mu) \in [0, \infty)$ such that \begin{align*}
\int_{\mathbb{R}^3}(I_{\alpha}*|v_n|^{3+\alpha})|v_n|^{3+\alpha} dx \to A(\mu)
 \end{align*}
along a subsequence. To complete the proof of (1) of Theorem \ref{Th:critical} it is enough to prove the following lemma.
\begin{Lem}  \label{Lem:compactness for critical}
        Assume that \ref{assumption:potential} holds and $1+\frac{\alpha}{3}<p<q=3+\alpha$. Let $\{(u_n, v_n)\}\subset\mathcal{M}$ be a minimizing sequence for $m$. If $\mu\ge\mu_0$, then there exist a constants $\xi > 0$ and $R > 0$ such that
        \begin{align}\label{eq:compactness for critical-1}
            \sup_{y \in \mathbb{R}^3} \int_{B_R(y)} (u_n^2 + v_n^2)\,dx > \xi.      
        \end{align}
    \end{Lem}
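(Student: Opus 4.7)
The plan is to argue by contradiction. Assume \eqref{eq:compactness for critical-1} fails; then the vanishing lemma (Lemma~\ref{Lions-th}) yields $u_n, v_n \to 0$ in $L^s(\mathbb{R}^3)$ for every $s \in (2, 6)$. Since the hypothesis $1+\frac{\alpha}{3} < p < 3+\alpha$ places $\frac{6p}{3+\alpha}$ strictly inside $(2, 6)$, the Hardy--Littlewood--Sobolev inequality together with \eqref{eq:convolution-term-est} gives
\[
\int_{\mathbb{R}^3}(I_\alpha*|u_n|^p)|u_n|^p\,dx \to 0.
\]
The critical Choquard term with $q=3+\alpha$ corresponds to the borderline exponent $\frac{6q}{3+\alpha}=6$ and need not vanish, so I set $A_n := \int_{\mathbb{R}^3}(I_\alpha*|v_n|^{3+\alpha})|v_n|^{3+\alpha}\,dx$ and $B_n := \|\nabla v_n\|_2^2$, and pass to a subsequence with $A_n \to A \ge 0$.

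Next I exploit the Nehari--Pohozaev constraint $J(u_n,v_n)=0$. Using \ref{assumption:potential} as in \eqref{eq:-lambda}, and the identity $\frac{q+\alpha+3}{q}=2$ valid for $q=3+\alpha$, the vanishing of the subcritical Choquard term reduces the constraint to
\[
2(a_1\|\nabla u_n\|_2^2 + a_2 B_n) + 4(1-\delta)(V_1\|u_n\|_2^2 + V_2\|v_n\|_2^2) + 2(b_1\|\nabla u_n\|_2^4 + b_2 B_n^2) \le 2\nu A_n + o(1).
\]
If $B_n \to 0$ along a subsequence, the critical HLS inequality \eqref{eq:upper_critical_constant} forces $A_n \to 0$, so every nonnegative term on the left vanishes. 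This yields $\|(u_n, v_n)\| \to 0$, contradicting $\underline{C}>0$ from Lemma~\ref{Lem:positive on NP}. Otherwise $\liminf_n B_n > 0$, and inserting the critical HLS bound $A_n \le (\mathcal{S}^*)^{-(3+\alpha)} B_n^{3+\alpha}$ into $a_2 B_n \le \nu A_n + o(1)$, then dividing by $B_n$ and passing to the limit, I obtain
\[
\liminf_n B_n \ge \left(\frac{a_2}{\nu}\right)^{\frac{1}{2+\alpha}} (\mathcal{S}^*)^{\frac{3+\alpha}{2+\alpha}}, \qquad A \ge \left(\frac{a_2\,\mathcal{S}^*}{\nu}\right)^{\frac{3+\alpha}{2+\alpha}}.
\]

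To finish, I would use $I(u_n,v_n) = \Phi(u_n,v_n)$ on $\mathcal{M}$, where $\Phi$ in \eqref{eq:functional Psi} is a sum of nonnegative quantities. Discarding every term except $\frac{q+\alpha-1}{8q}\nu A_n = \frac{1+\alpha}{4(3+\alpha)}\nu A_n$ (the computation $\frac{q+\alpha-1}{8q}=\frac{1+\alpha}{4(3+\alpha)}$ at $q=3+\alpha$ is routine) and passing to the limit yields
\[
m_{\mu,\nu} = \lim_n I(u_n,v_n) \ge \frac{1+\alpha}{4(3+\alpha)}\nu \left(\frac{a_2\,\mathcal{S}^*}{\nu}\right)^{\frac{3+\alpha}{2+\alpha}},
\]
which directly contradicts \eqref{eq:mu_big} for $\mu\ge\mu_0$. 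The main obstacle is the bootstrap between $J=0$ and the critical HLS inequality: one must extract a positive lower bound on the critical Choquard integral after a possibly lossy estimate of the quadratic part, and the numerical constants must be tracked precisely so that the resulting lower bound on $m_{\mu,\nu}$ matches the threshold in \eqref{eq:mu_big}. A subtle point is that, although $\{(u_n,v_n)\}$ is bounded in $H$, the $L^2$-norms need not vanish under mere $L^s$-vanishing for $s\in(2,6)$, so the coupling integral $\int u_nv_n\,dx$ must be absorbed via \ref{assumption:potential} rather than Cauchy--Schwarz; it is precisely the $\mu$-independence of the resulting bound on $m_{\mu,\nu}$ that makes \eqref{eq:mu_big}, imposed by taking $\mu$ large, truly incompatible with the vanishing scenario.
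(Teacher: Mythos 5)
Your argument is correct and follows essentially the same route as the paper: Lions vanishing kills the subcritical Choquard term, the constraint $J(u_n,v_n)=0$ combined with \ref{assumption:potential} and the critical constant $\mathcal{S}^*$ forces either $\|(u_n,v_n)\|\to 0$ (contradicting Lemma \ref{Lem:positive on NP}) or a lower bound on the critical Choquard integral, and then $\Phi=I-\tfrac18 J$ yields $m_{\mu,\nu}\ge \frac{\alpha+1}{4(3+\alpha)}\nu\bigl(\frac{a_2}{\nu}\mathcal{S}^*\bigr)^{\frac{3+\alpha}{2+\alpha}}$, contradicting \eqref{eq:mu_big}. The only cosmetic difference is that you split cases on $\|\nabla v_n\|_2^2$ rather than on the limit $A(\mu)$ of the critical integral, which is an equivalent dichotomy with the same constants.
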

\begin{proof}
        Suppose by contradiction that \eqref{eq:compactness for critical-1} does not hold. Then, for any $R > 0$, we have
        \begin{align*}
            \lim_{n \to \infty} \sup_{y \in \mathbb{R}^3} \int_{B_R(y)} (u_n^2 + v_n^2)\,dx = 0.
        \end{align*}
        By Lemma \ref{Lions-th}, it follows that $u_n \to 0$ in $L^r(\mathbb{R}^3)$ for $2 < r < 6$. Since $\frac{3+\alpha}{3}<p<3+\alpha$ by the Hardy--Littlewood--Sobolev inequality we see that
        \begin{align*}
        \int_{\mathbb{R}^3}(I_{\alpha}*|u_n|^p)|u_n|^pdx\to 0.
        \end{align*}

        From \eqref{eq:NP functional} and the fact $J(u_n,v_n)=0$, it follows that 
        \begin{align}\label{eq:mu-big-2}
            \begin{split}
 &2(a_1\|\nabla u_n\|_2^2 + a_2\|\nabla v_n\|_2^2) + 4\left(V_1\|u_n\|_2^2 + V_2\|v_n\|_2^2- 2\lambda\int_{\mathbb{R}^3}u_nv_ndx\right) \\
 &\quad\quad +2(b_1\|\nabla u_n\|_2^4+b_2\|\nabla v_n\|_2^4)=2\nu A(\mu)+o(1)
            \end{split}
        \end{align}
Since the second and the third terms of the left-hand-side of the above identity is nonnegative, by \eqref{eq:upper_critical_constant} we see that 
\begin{align*}
2a_2\mathcal{S}^*A(\mu)^{\frac{1}{3+\alpha}}+o(1)&\le 2a_2\|\nabla v_n\|_2^2 \\
                                     &\le 2\nu A(\mu)+o(1).
\end{align*}
We now assume that $A(\mu)>0$. Then we obtain
\begin{align}\label{eq:A_mu_est}
\left(\frac{a_2}{\nu}\mathcal{S}^*\right)^{\frac{3+\alpha}{2+\alpha}}\le A(\mu). 
\end{align}
On the other hand, from \eqref{eq:A_mu_est} and \eqref{eq:functional Psi} we obtain
\begin{align*}
m_{\mu,\nu}+o(1)= &\ \frac{1}{2}(a_1\|\nabla u_n\|_2^2+a_2\|\nabla v_n\|_2^2) \\
     &\ \ +\frac{p+\alpha-1}{8p}\mu\int_{\mathbb{R}^3}(I_{\alpha}*|u_n|^p)|u_n|^pdx+\frac{2(\alpha+1)}{8(3+\alpha)}\nu\int_{\mathbb{R}^3}(I_{\alpha}*|v_n|^{3+\alpha})|v_n|^{3+\alpha}dx \\
     \ge&\ \frac{2(\alpha+1)}{8(\alpha+3)}\nu\left(\frac{a_2}{\nu}\mathcal{S}^*\right)^{\frac{3+\alpha}{2+\alpha}}+o(1). 
\end{align*}
and 
\begin{align*}
m_{\mu,\nu}\ge \frac{\alpha+1}{4(\alpha+3)}\nu\left(\frac{a_2}{\nu}\mathcal{S}^*\right)^{\frac{3+\alpha}{2+\alpha}}.
\end{align*}
This is a contradiction to \eqref{eq:mu_big}. Hence $A(\mu)=0$. However, by \eqref{eq:mu-big-2} this implies that $(u_n,v_n)\to 0$ in $H$ which is contradiction to Lemma \ref{Lem:positive on NP}.   
\end{proof}
\begin{proof}[Proof of (1) of Theorem B]
Since \eqref{eq:compactness for critical-1} holds, we can consider the shift sequence $(\Tilde{u}_n(x), \Tilde{v}_n(x)) \coloneqq (u_n(x + y_n), v_n(x + y_n))$ and we see that the weak limit is nontrivial. Therefore, we can repeat the same arguments as the proof of Theorem \ref{Th:non-critical}. The proof of 
(1) of Theorem \ref{Th:critical} has been completed.
\end{proof}

\subsection{The Lower half critical Case $\bm{1+\alpha/3=p<q<3+\alpha}$}

In this subsection, we prove (2) of Theorem \ref{Th:critical}. By Lemma \ref{lem:critical-lemma} for each fixed $\mu>0$ there exists $\nu_0=\nu_0(\mu)>0$ such that 
\begin{align}\label{eq:nu_big}
m_{\mu,\nu}<\frac{\alpha}{2(3+\alpha)}\mu\left(\frac{4(3+\alpha)}{\mu(12+\alpha)}V_1(1-\delta)\mathcal{S}_*\right)^{\frac{3+\alpha}{3}}\ \ \mbox{for}\ \nu\ge\nu_0.
\end{align}
Fix $\nu\ge \nu_0$ and take a minimizing sequence $\{(u_n, v_n)\} \subset \mathcal{M}$ for $m_{\mu,\nu}$. By \eqref{eq:I is positive}, $\{(u_n, v_n)\}$ is bounded in $H$ and we may assume that $(u_n, v_n) \rightharpoonup (u, v)$ weakly in $H$.

By the Hardy--Littlewood--Sobolev inequality, there exists $B(\nu) \in [0, \infty)$ such that \begin{align*}
\int_{\mathbb{R}^3}(I_{\alpha}*|v_n|^{1+\frac{\alpha}{3}})|v_n|^{1+\frac{\alpha}{3}} dx \to B(\nu) \in [0, +\infty)
 \end{align*}
along a subsequence. To complete the proof of (2) of Theorem \ref{Th:critical} it is enough to prove the following lemma.
\begin{Lem}  \label{Lem:compactness for critical-2}
        Assume that \ref{assumption:potential} holds and $p=1+\frac{\alpha}{3}<q<3+\alpha$. Let $\{(u_n, v_n)\}\subset\mathcal{M}$ be a minimizing sequence for $m$. If $\nu\ge\nu_0$, then there exist a constants $\xi > 0$ and $R > 0$ such that
        \begin{align}\label{eq:compactness for critical-2}
            \sup_{y \in \mathbb{R}^3} \int_{B_R(y)} (u_n^2 + v_n^2)\,dx > \xi.      
        \end{align}
    \end{Lem}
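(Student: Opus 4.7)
The plan is to mirror the proof of Lemma \ref{Lem:compactness for critical} with the roles of the two Choquard terms interchanged: here the subcritical term is the one involving $v_n$, while the critical term involves $u_n$ at the lower critical exponent $p = (3+\alpha)/3$, so the controlling best constant is $\mathcal{S}_*$ from \eqref{eq:lower_critical_constant} rather than $\mathcal{S}^*$. I would argue by contradiction: assume \eqref{eq:compactness for critical-2} fails; then Lemma \ref{Lions-th} yields $u_n, v_n \to 0$ in $L^r(\mathbb{R}^3)$ for every $r \in (2,6)$. Since $\tfrac{3+\alpha}{3} < q < 3+\alpha$, the Hardy--Littlewood--Sobolev estimate \eqref{eq:convolution-term-est} applied to $v_n$ gives $\int_{\mathbb{R}^3}(I_{\alpha}*|v_n|^q)|v_n|^q\,dx \to 0$, whereas the quantity $B_n := \int_{\mathbb{R}^3}(I_{\alpha}*|u_n|^{(3+\alpha)/3})|u_n|^{(3+\alpha)/3}\,dx$ is controlled only via \eqref{eq:lower_critical_constant}, since Lions' $L^r$--vanishing requires $r > 2$ and the natural exponent here is $2$.

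Next I would exploit $J(u_n, v_n) = 0$ combined with \ref{assumption:potential}. A short computation shows that the coefficient $(p+\alpha+3)/p$ equals $4$ when $p = (3+\alpha)/3$. Using
\[
V_1\|u_n\|_2^2 + V_2\|v_n\|_2^2 - 2\lambda\int_{\mathbb{R}^3} u_n v_n\,dx \;\ge\; (1-\delta)(V_1\|u_n\|_2^2 + V_2\|v_n\|_2^2),
\]
the identity $J(u_n,v_n)=0$ reduces, after dropping the nonnegative $a_i$ and $b_i$ contributions, to
\[
4(1-\delta)V_1\|u_n\|_2^2 \;\le\; 4\mu B_n + o(1).
\]
Passing to a subsequence so that $B_n \to B \in [0,\infty)$, I would first rule out $B = 0$: if $B = 0$, the full left-hand side of the $J=0$ identity is $o(1)$, but it dominates $2(1-\delta)\|(u_n, v_n)\|^2$, so $\|(u_n,v_n)\| \to 0$, contradicting Lemma \ref{Lem:positive on NP}. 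Once $B > 0$ is secured, inserting the lower critical bound $\mathcal{S}_* B_n^{3/(3+\alpha)} \le \|u_n\|_2^2$ into the displayed inequality and letting $n \to \infty$ yields
\[
B \;\ge\; \left(\frac{(1-\delta)V_1\mathcal{S}_*}{\mu}\right)^{(3+\alpha)/\alpha}.
\]

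Finally, I would rewrite $I(u_n, v_n)$ via $J(u_n, v_n)=0$ using $\Phi$ from \eqref{eq:functional Psi}. The coefficient $(p+\alpha-1)/(8p)$ simplifies to $\alpha/(2(3+\alpha))$ when $p = (3+\alpha)/3$, so, since the $q$--Choquard term vanishes,
\[
m_{\mu,\nu} + o(1) \;=\; I(u_n, v_n) \;\ge\; \tfrac{\alpha\mu}{2(3+\alpha)}\, B_n + o(1),
\]
and letting $n\to\infty$ gives a strictly positive lower bound on $m_{\mu,\nu}$ depending only on $\mu$, $V_1$, $\delta$, $\alpha$, and $\mathcal{S}_*$. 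Since this lower bound is independent of $\nu$, while Lemma \ref{lem:critical-lemma} ensures $m_{\mu,\nu} \to 0$ as $\nu \to \infty$, the strict threshold \eqref{eq:nu_big} is violated once $\nu \ge \nu_0$, producing the contradiction. The hard part will be the step that rules out $B = 0$: at the lower critical exponent Lions' vanishing cannot by itself kill $B_n$, so the argument must combine the full $J=0$ identity with \ref{assumption:potential} and Lemma \ref{Lem:positive on NP} to preclude $\|(u_n,v_n)\|\to 0$; with this in hand, the interplay between \eqref{eq:lower_critical_constant} and the $L^2$--control from $J=0$ drives the contradiction in exactly the way $\mathcal{S}^*$ did in the upper half critical case.
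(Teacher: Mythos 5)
Your proposal follows essentially the same route as the paper's proof: argue by contradiction, use Lions' lemma to kill the subcritical $q$-term, combine $J(u_n,v_n)=0$ with \ref{assumption:potential} and the lower-critical constant $\mathcal{S}_*$ from \eqref{eq:lower_critical_constant} to force a positive lower bound on $B$ (the case $B=0$ being excluded via Lemma \ref{Lem:positive on NP}), and then use $\Phi=I-\tfrac18 J$ to push $m_{\mu,\nu}$ above an explicit positive constant, contradicting the smallness of $m_{\mu,\nu}$ for $\nu\ge\nu_0$. Your constants ($\tfrac{p+\alpha+3}{p}=4$, $\tfrac{p+\alpha-1}{8p}=\tfrac{\alpha}{2(3+\alpha)}$, exponent $\tfrac{3+\alpha}{\alpha}$) are in fact the correct ones — the paper's displayed coefficients contain minor slips there — so the only adjustment needed is to take $\nu_0$ in \eqref{eq:nu_big} relative to the constant you actually derive, which Lemma \ref{lem:critical-lemma} permits.
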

\begin{proof}
        Suppose by contradiction that \eqref{eq:compactness for critical-2} does not hold. Then, for any $R > 0$, we have
        \begin{align*}
            \lim_{n \to \infty} \sup_{y \in \mathbb{R}^3} \int_{B_R(y)} (u_n^2 + v_n^2)\,dx = 0.
        \end{align*}
        By Lemma \ref{Lions-th}, it follows that $u_n \to 0$ in $L^r(\mathbb{R}^3)$ for $2 < r < 6$. Since $\frac{3+\alpha}{3}<q<3+\alpha$ by the Hardy--Littlewood--Sobolev inequality we see that
        \begin{align*}
        \int_{\mathbb{R}^3}(I_{\alpha}*|u_n|^q)|u_n|^qdx\to 0.
        \end{align*}

        From \eqref{eq:NP functional} and the fact $J(u_n,v_n)=0$, it follows that 
        \begin{align}\label{eq:nu-big-3}
            \begin{split}
 &2(a_1\|\nabla u_n\|_2^2 + a_2\|\nabla v_n\|_2^2) + 4\left(V_1\|u_n\|_2^2 + V_2\|v_n\|_2^2- 2\lambda\int_{\mathbb{R}^3}u_nv_ndx\right) \\
 &+2(b_1\|\nabla u_n\|_2^4+b_2\|\nabla v_n\|_2^4) =\frac{12+\alpha}{3+\alpha}\mu B({\nu})+o(1)
            \end{split}
        \end{align}
By \eqref{eq:lower_critical_constant} we see that 
\begin{align*}
\begin{split}
  &\ 4V_1(1-\delta)\mathcal{S}_*B(\nu)^{\frac{3}{3+\alpha}}+o(1) \\
\le &\ 4V_1(1-\delta)\|v_n\|_2^2 \\
\le &\ 2(a_1\|\nabla u_n\|_2^2 + a_2\|\nabla v_n\|_2^2) + 4\left(V_1\|u_n\|_2^2 + V_2\|v_n\|_2^2- 2\lambda\int_{\mathbb{R}^3}uvdx\right) \\
   &\quad +2(b_1\|\nabla u_n\|_2^4+b_2\|\nabla v_n\|_2^4) \\
=&\ \frac{12+\alpha}{3+\alpha}\mu B(\nu)+o(1) \end{split}
\end{align*}
We now assume that $B(\nu)>0$. Then we obtain
\begin{align}\label{eq:B_nu_est}
\left(\frac{4(3+\alpha)V_1(1-\delta)}{\mu(12+\alpha)}\mathcal{S}_*\right)^{\frac{\alpha+3}{\alpha}}\le B(\nu). 
\end{align}
On the other hand, from \eqref{eq:B_nu_est} and \eqref{eq:functional Psi} we obtain
\begin{align*}
    &\ m_{\mu,\nu}+o(1)\\
\ge &\ \frac{1}{2}(a_1\|\nabla u_n\|_2^2+a_2\|\nabla v_n\|_2^2) \\
     &\ \ +\frac{\alpha+1}{2(3+\alpha)}\mu\int_{\mathbb{R}^3}(I_{\alpha}*|u_n|^{1+\frac{\alpha}{3}})|u_n|^{1+\frac{\alpha}{3}}dx+\frac{q+\alpha-1}{8q}\nu\int_{\mathbb{R}^3}(I_{\alpha}*|v_n|^q)|v_n|^qdx \\
     \ge&\ \frac{\alpha+1}{2(\alpha+3)}\mu\left(\frac{4(3+\alpha)V_1(1-\delta)}{\mu(12+\alpha)}\mathcal{S}_*\right)^{\frac{3+\alpha}{\alpha}}+o(1). 
\end{align*}
and 
\begin{align*}
m_{\mu,\nu}\ge \frac{\alpha+1}{2(\alpha+3)}\mu\left(\frac{4(3+\alpha)V_1(1-\delta)}{\mu(12+\alpha)}\mathcal{S}_*\right)^{\frac{3+\alpha}{\alpha}}.
\end{align*}
This is a contradiction to \eqref{eq:nu_big}. Hence $B(\nu)=0$. However, by \eqref{eq:nu-big-3} this implies that $(u_n,v_n)\to 0$ in $H$ which is contradiction to Lemma \ref{Lem:positive on NP}.   
\end{proof}
\begin{proof}[Proof of (2) of Theorem B]
Since \eqref{eq:compactness for critical-2} holds, we can consider the shift sequence $(\Tilde{u}_n(x), \Tilde{v}_n(x)) \coloneqq (u_n(x + y_n), v_n(x + y_n))$ and we see that the weak limit is nontrivial. Therefore, we can repeat the same arguments as the proof of Theorem \ref{Th:non-critical}. The proof of 
(2) of Theorem \ref{Th:critical} has been completed.
\end{proof}
\section{Proof of Theorem \ref{Th:nonexistence}}

In this section we give the proof of Theorem \ref{Th:nonexistence}. 

\begin{proof}[Proof of Theorem \ref{Th:nonexistence}]
Suppose that $(u, v)\in H$ is a weak solution to \eqref{eq:NKC}. From $\langle I'(u,v), (u,v)\rangle =0$ we have
\begin{align}\label{eq:Nehari}
\begin{split}
 &\ \langle I'(u,v), (u,v)\rangle \\
=&\ (a_1\|\nabla u\|_2^2+a_2\|\nabla v\|_2^2)+\left(V_1\|u\|_2^2+V_2\|v\|_2^2-2\lambda\int_{\mathbb{R}^3}uvdx\right) \\
&\ \ \ +(b_1\|\nabla u\|_2^4+b_2\|\nabla v\|_2^4)-\mu\int_{\mathbb{R}^3}(I_{\alpha}*|u|^p)|u|^pdx-\nu\int_{\mathbb{R}^3}(I_{\alpha}*|v|^q)|v|^qdx=0.
\end{split}
\end{align}
By Proposition \ref{Lem:Pohozaev} we also have the Pohozaev identity
\begin{align}\label{eq:Pohozaev-ThC}
\begin{split}
P(u,v)=&\ \frac{1}{2}(a_1\|\nabla u\|_2^2 + a_2\|\nabla v\|_2^2) + \frac{3}{2}\left(V_1\|u\|_2^2 + V_2\|v\|_2^2\right)+\frac{1}{2}(b_1\|\nabla u\|_2^4 + b_2\|\nabla v\|_2^4) \\
            &\quad- \frac{3+\alpha}{2p}\mu\int_{\mathbb{R}^3}(I_{\alpha}*|u|^p)|u|^pdx - \frac{3+\alpha}{2q}\nu\int_{\mathbb{R}^3}(I_{\alpha}*|v|^q)|v|^qdx\\ 
            &\quad -3 \lambda \int_{\mathbb{R}^3} uv\,dx = 0.
\end{split}
\end{align}
We first consider the case $p=q=3+\alpha$. By \eqref{eq:Nehari}, \eqref{eq:Pohozaev-ThC} and \eqref{eq:-lambda} we obtain 
\begin{align*}
0=&\ P(u,v)-\frac{1}{2}\langle I'(u,v), (u,v)\rangle \\
 =&\ \left(V_1\|u\|_2^2+V_2\|v\|_2^2-2\lambda\int_{\mathbb{R}^3} uvdx \right) \\
 \ge&\ (1-\delta)(V_1\|u\|_2^2+V_2\|v\|_2^2).
\end{align*}
Therefore $u=v=0$.

We next consider the case $p=q=\frac{3+\alpha}{3}$. By \eqref{eq:Nehari} and  \eqref{eq:Pohozaev-ThC} we obtain 
\begin{align*}
0=&\ \frac{3}{2}\langle I'(u,v), (u,v)\rangle-P(u,v) \\
 =&\ \frac{1}{2}(a_1\|\nabla u\|_2^2+a_2\|\nabla v\|_2^2)+\frac{3}{2}(b_1\|\nabla u\|_2^4+b_2\|\nabla v\|_2^4) \\
 \ge&\ \frac{1}{2}\mathcal{S}(a_1\|u\|_6^2+a_2\|v\|_6^2).
\end{align*}
Therefore $u=v=0$.

The proof of Theorem \ref{Th:nonexistence} has been completed. 
\end{proof}

\section*{Disclosure statement} 

No potential conflict of interest was reported by the author.

\end{document}